\theoremstyle{definition}
\newtheorem{thm}{Theorem}[section]
\newtheorem*{thm*}{Theorem}
\newtheorem*{conj*}{Conjecture}
\newtheorem*{conv*}{Convention}
\newtheorem{cor}[thm]{Corollary}
\newtheorem*{cor*}{Corollary}
\newtheorem*{defn*}{Definition}
\newtheorem*{exa*}{Example}
\newtheorem{exc}[thm]{Exercise}
\newtheorem*{exc*}{Exercise}
\newtheorem*{fact*}{Fact}
\newtheorem{lem}[thm]{Lemma}
\newtheorem*{lem*}{Lemma}
\newtheorem*{prob*}{Problem}
\newtheorem*{prop*}{Proposition}
\newtheorem*{ques*}{Question}
\newtheorem*{rmk*}{Remark}
\newcommand{\C}{\mathbb{C}}
\renewcommand{\H}{\mathbb{H}}
\newcommand{\Q}{\mathbb{Q}}
\newcommand{\R}{\mathbb{R}}
\newcommand{\Z}{\mathbb{Z}}
\newcommand{\cA}{\mathcal{A}}
\newcommand{\cC}{\mathcal{C}}
\newcommand{\cE}{\mathcal{E}}
\newcommand{\cM}{\mathcal{M}}
\newcommand{\cS}{\mathcal{S}}
\newcommand{\cU}{\mathcal{U}}
\newcommand{\al}{\alpha}
\newcommand{\bet}{\beta}
\newcommand{\Gam}{\Gamma}
\newcommand{\gam}{\gamma}
\newcommand{\del}{\delta}
\newcommand{\eps}{\varepsilon}
\newcommand{\kap}{\kappa}
\newcommand{\Lam}{\Lambda}
\newcommand{\lam}{\lambda}
\newcommand{\sig}{\sigma}
\newcommand{\Om}{\Omega}
\newcommand{\om}{\omega}
\newcommand{\ra}{\rightarrow}
\newcommand{\ol}{\overline}
\newcommand{\pr}{\prime}
\newcommand{\wt}{\widetilde}
\newcommand{\mat}[4]{\left(\begin{array}{cc} #1 & #2 \\ #3 & #4 \end{array}\right)}
\newcommand{\hmod}{\hspace{-4pt}\mod} 
\newcommand{\sm}{\setminus}
\DeclareMathOperator{\GL}{GL}
\DeclareMathOperator{\SL}{SL}
\DeclareMathOperator{\ind}{ind}
\DeclareMathOperator*{\Rel}{Rel}
\DeclareMathOperator{\supp}{supp}
\newcommand{\be}{\begin{equation*}}
\newcommand{\ee}{\end{equation*}}
\newcommand{\bex}{\begin{exc}}
\newcommand{\eex}{\end{exc}}
\newcommand{\bpf}{\begin{proof}}
\newcommand{\epf}{\end{proof}}
\title{Dense real Rel flow orbits and absolute period leaves}
\author[Karl Winsor]{Karl Winsor \\ \\ \monthname[\the\month] \the\day, \the\year}
\begin{document}

\maketitle

\begin{abstract}
We show the existence of a dense orbit for real Rel flows on the area-$1$ locus of every connected component of every stratum of holomorphic $1$-forms with at least $2$ distinct zeros. For this purpose, we establish a general density criterion for $\SL(2,\R)$-orbit closures, based on finding an orbit of a real Rel flow whose closure contains a horocycle. This criterion can be verified using explicit constructions of holomorphic $1$-forms with a periodic horizontal foliation. Our constructions also provide explicit examples of dense leaves of the absolute period foliation and many subfoliations of these loci.
\end{abstract}


\section{Introduction} \label{sec:intro}

This paper contributes to the study of the dynamics of flows and foliations on spaces of holomorphic $1$-forms. Fix $g \geq 2$ and a partition $\kap = (k_1,\dots,k_n)$ of $2g-2$. The stratum $\Om\cM_g(\kap)$ is the moduli space of nonzero holomorphic $1$-forms on closed Riemann surfaces of genus $g$ with zero orders given by $\kap$. We denote elements of $\Om\cM_g(\kap)$ by pairs $(X,\om)$. The holomorphic $1$-form $\om$ provides geometric structures on $X$, including a flat metric and a foliation by horizontal lines, with singularities along the zero set $Z(\om)$.

When $n > 1$, it is possible to deform $(X,\om)$ by moving the zeros of $\om$ relative to each other without changing the integrals of $\om$ along closed loops in $X$. Let $\C^{Z(\om)}/\C(1,\dots,1)$ be the vector space of functions $Z(\om) \ra \C$ modulo constants. A nonzero $v \in \C^{Z(\om)}/\C(1,\dots,1)$ determines a family of holomorphic $1$-forms $(X_t,\om_t) \in \Om\cM_g(\kap)$ such that for any $c \in H_1(X;\Z)$ and zeros $Z,Z^\pr$ of $\om$, we have
\be
\frac{d}{dt} \int_c \om_t = 0, \quad \frac{d}{dt} \int_Z^{Z^\pr} \om_t = v_{Z^\pr} - v_Z .
\ee
The holomorphic $1$-forms $(X_t,\om_t)$ are obtained from $(X,\om)$ by slitting and regluing along certain segments emanating from the zeros of $\om$. After passing to a finite cover $\wt{\Om}\cM_g(\kap) \ra \Om\cM_g(\kap)$ obtained by labelling the zeros $Z_1,\dots,Z_n$, these deformations determine a partially defined flow. Define
\be
\C^n_0 = \C^n/\C(1,\dots,1), \quad \R^n_0 = \R^n/\R(1,\dots,1) .
\ee
When $v \in \R^n_0$, this flow is called a {\em real Rel flow}. Orbits of real Rel flows are not always well-defined for all time, since distinct zeros may collide in finite time, but it is known that this is the only obstruction \cite{BSW:horocycle}, \cite{MW:cohomology}. We denote the flow orbit through $(X,\om)$ by $\Rel_{\R v}(X,\om)$ when it is well-defined for all time. \\

\paragraph{\bf Main results.} Our two main results address the topological dynamics of real Rel flows and the foliations obtained by ``complexifying'' these flows. We state these results in Theorems \ref{thm:denserealRel} and \ref{thm:denseV} below, and we outline our methods of proof.

Since the area of a holomorphic $1$-form is constant along a real Rel flow orbit, let $\wt{\Om}_1\cM_g(\kap)$ be the area-$1$ locus in $\wt{\Om}\cM_g(\kap)$. A basic question in topological dynamics is the existence of a dense orbit. Our first main result completely answers this question for real Rel flows on strata.

\begin{thm} \label{thm:denserealRel}
Suppose $n > 1$, fix a nonzero $v \in \R^n_0$, and let $\wt{\cC}_1$ be a connected component of $\wt{\Om}_1\cM_g(\kap)$. There exists $(X,\om) \in \wt{\cC}_1$ such that $\Rel_{\R v}(X,\om)$ is dense in $\wt{\cC}_1$.
\end{thm}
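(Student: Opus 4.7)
The plan is to verify the density criterion for $\SL(2,\R)$-orbit closures announced in the abstract, applied to a carefully constructed starting point $(X,\om) \in \wt{\cC}_1$. That criterion reduces the problem to producing an $(X,\om)$ whose real Rel orbit closure $\overline{\Rel_{\R v}(X,\om)}$ contains a full horocycle orbit $U \cdot (X,\om)$, where $U = \{u_s = \mat{1}{s}{0}{1} : s \in \R\}$. Combined with equidistribution/classification results for horocycle orbits on strata, this gives $\overline{\Rel_{\R v}(X,\om)} \supseteq \overline{U \cdot (X,\om)} = \wt{\cC}_1$, yielding the theorem.

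The candidate surfaces will be drawn from the class of holomorphic $1$-forms with completely periodic horizontal foliation, built by gluing finitely many horizontal cylinders along horizontal saddle connections. These surfaces are convenient for two reasons. First, the unipotent flow $u_s$ acts on each cylinder by a shear, so its effect on cylinder moduli is explicit and the combinatorics of saddle connections and moduli determine when $u_s$ closes up. Second, by adjusting the combinatorial data --- number of cylinders, adjacency pattern, labeling of zeros, and prescribed heights and twists --- one can realize any connected component $\wt{\cC}_1$ by matching the Kontsevich--Zorich classification invariants (hyperellipticity and spin parity).

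Given such a starting point $(X,\om)$, I would analyze the real Rel flow in direction $v$. Since $\Rel_{tv}$ only moves zeros along horizontal segments, it preserves cylinder widths and only redistributes heights and twists linearly according to $v$ and the cylinder combinatorics. With the cylinder diagram chosen so that the relations among cylinder parameters and the components of $v$ are sufficiently irrational, a Kronecker-type equidistribution argument on the residual torus of cylinder moduli forces the trajectory $\{\Rel_{tv}(X,\om) : t \in \R\}$ to return to configurations arbitrarily close to $(X,\om)$, with residual horocycle shears $u_{s_n}$ whose parameters $s_n$ become dense in $\R$. Hence $U \cdot (X,\om) \subseteq \overline{\Rel_{\R v}(X,\om)}$, verifying the hypothesis of the criterion.

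I expect the main obstacle to be precisely this construction step: for each component $\wt{\cC}_1$ and each direction $v$, the cylinder diagram must simultaneously (i) realize the correct connected component of $\wt{\Om}_1\cM_g(\kap)$, (ii) interact with the Rel direction $v$ so as to produce the required equidistribution on cylinder moduli, and (iii) avoid degenerations in which distinct zeros collide in finite time and the Rel flow breaks down. Achieving all three uniformly across strata will likely require substantial combinatorial freedom --- many cylinders with variable twists and heights --- and delicate case analysis for minimal strata and hyperelliptic or low-dimensional special components. Once this construction is carried out, the density criterion delivers Theorem \ref{thm:denserealRel} immediately.
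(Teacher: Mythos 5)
Your overall plan correctly identifies the two ingredients the paper uses: build a horizontally periodic $(X,\om)$ in $\wt{\cC}_1$, check via a Kronecker-type argument on twist parameters that $\overline{\Rel_{\R v}(X,\om)}$ contains the horocycle $U\cdot(X,\om)$, and recognize the combinatorial difficulty of hitting every Kontsevich--Zorich component. But the final promotion step has a genuine gap. You claim $\overline{\Rel_{\R v}(X,\om)} \supseteq \overline{U\cdot(X,\om)} = \wt{\cC}_1$, invoking ``equidistribution/classification results for horocycle orbits on strata.'' No such classification is available for general strata, and, more to the point, for the specific $(X,\om)$ you constructed it is simply false that $\overline{U\cdot(X,\om)} = \wt{\cC}_1$: when the horizontal foliation is periodic, the entire horocycle orbit lies in the compact twist torus, and so does the real Rel orbit. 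The paper states this explicitly at the end of Section~\ref{sec:densecriterion}: for such an $(X,\om)$, ``the closure of $\Rel_{\R v}(X,\om)$ does not contain the $\SL(2,\R)$-orbit of $(X,\om)$.'' So the surface you construct cannot itself have a dense Rel orbit.

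The missing idea is how the paper bridges this gap. First, $(X,\om)$ is perturbed (Lemma~\ref{lem:QRindep}, using Wright's criterion, Theorem~\ref{thm:Wdense}) so that its $\SL(2,\R)$-orbit is dense in $\wt{\cC}_1$ while keeping the ``Rel closure contains horocycle'' property (Lemma~\ref{lem:recip}). Then Theorem~\ref{thm:densecriterion} is applied: this step uses Forni's equidistribution result (Theorem~\ref{thm:Fequi}) for geodesic pushforwards $(g_{t_n})_*$ of horocycle arcs, the fact that the geodesic flow normalizes both the horocycle flow and real Rel, and a Baire category argument to produce a \emph{different}, non-explicit point $(Y,\eta)$ in the $\SL(2,\R)$-orbit closure whose Rel orbit is dense. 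The dense Rel orbit is not through the explicitly constructed horizontally periodic surface; it is through a generic point found abstractly. Your proposal omits the geodesic flow, the Forni equidistribution input, and the Baire category passage, and it is exactly these that make the argument go through. You should also make the non-homologous cylinder condition (condition~(3) in Theorem~\ref{thm:constr}) explicit, since it is needed (via Lemma~4.9 of \cite{Wri:cylinder}) to get $\Q$-independence of the reciprocal circumferences after perturbation.
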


\noindent
Previously, Theorem \ref{thm:denserealRel} was unknown even in the case $\kap = (1,1)$.

When $n > 1$, the stratum $\Om\cM_g(\kap)$ also admits a holomorphic foliation $\cA(\kap)$ called the {\em absolute period foliation}. Two holomorphic $1$-forms lie on the same leaf of $\cA(\kap)$ if and only if there is a path between them along which the integrals along closed loops are constant. Leaves of $\cA(\kap)$ have a natural locally Euclidean metric, recorded by integrals along paths between distinct zeros. Orbits of real Rel flows project to geodesics on leaves of $\cA(\kap)$, so our density result has consequences for the absolute period foliation.

\begin{cor} \label{cor:denseleaf}
Suppose $n > 1$, and let $\cC_1$ be a connected component of $\Om_1\cM_g(\kap)$. There exists a leaf of $\cA(\kap)$ that is dense in $\cC_1$.
\end{cor}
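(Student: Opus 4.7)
The plan is to deduce Corollary \ref{cor:denseleaf} directly from Theorem \ref{thm:denserealRel} by pushing a dense real Rel flow orbit down along the finite cover $\pi \colon \wt{\Om}\cM_g(\kap) \ra \Om\cM_g(\kap)$ and observing that real Rel flow orbits lie on single leaves of $\cA(\kap)$.

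First, fix any nonzero $v \in \R^n_0$; for concreteness one may take $v$ with $v_1 = 1$ and $v_i = 0$ for $i > 1$. Given a connected component $\cC_1$ of $\Om_1\cM_g(\kap)$, its preimage under the finite cover $\pi$ restricts to a disjoint union of finitely many components of $\wt{\Om}_1\cM_g(\kap)$, each mapping surjectively onto $\cC_1$ since $\pi$ is a covering map. Select any such component $\wt{\cC}_1$, and apply Theorem \ref{thm:denserealRel} to obtain $(X,\om) \in \wt{\cC}_1$ whose real Rel flow orbit $\Rel_{\R v}(X,\om)$ is dense in $\wt{\cC}_1$.

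Next, I would observe two things about the projection $\pi(\Rel_{\R v}(X,\om)) \subset \cC_1$. First, since $\pi$ is continuous, open, and surjective onto $\cC_1$, the image of a dense subset of $\wt{\cC}_1$ is dense in $\cC_1$. Second, the defining property of a real Rel flow is that absolute periods $\int_c \om_t$ are preserved along the orbit; this is exactly the condition for the orbit to lie on a single leaf of the pullback of $\cA(\kap)$ to $\wt{\Om}\cM_g(\kap)$. Pushing forward by $\pi$, the set $\pi(\Rel_{\R v}(X,\om))$ is contained in a single leaf $L$ of $\cA(\kap)$. (Indeed, the excerpt already notes that real Rel flow orbits project to geodesics on leaves of $\cA(\kap)$.) Since $L$ contains a dense subset of $\cC_1$, we conclude that $L$ itself is dense in $\cC_1$.

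There is no substantive obstacle here beyond bookkeeping: the entire argument is a direct consequence of Theorem \ref{thm:denserealRel} together with the fact that $\pi$ is a finite cover and that real Rel flows preserve absolute periods. The closest thing to a subtle point is verifying that a component of $\Om_1\cM_g(\kap)$ is covered by a component of $\wt{\Om}_1\cM_g(\kap)$, which follows from basic covering space theory since the cover is obtained by labelling the zeros.
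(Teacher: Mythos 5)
Your proposal is correct and matches the paper's (implicit) argument: the paper deduces the corollary directly from Theorem~\ref{thm:denserealRel} by noting that real Rel flow orbits project to subsets of single leaves of $\cA(\kap)$, so a dense orbit in $\wt{\cC}_1$ yields a dense leaf in $\cC_1$ after projecting along the finite cover. The covering-space bookkeeping you include is exactly the intended justification.
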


In \cite{Win:ergodic}, a stronger density result is proven for leaves of $\cA(\kap)$ in the case where $n > 1$ and $\Om\cM_g(\kap)$ is connected. The above results are not explicit. However, our proofs will involve explicit constructions and will provide explicit examples of dense leaves of $\cA(\kap)$ and of many foliations whose leaves are contained in leaves of $\cA(\kap)$. Our second main result will provide a stronger and explicit version of Corollary \ref{cor:denseleaf}. \\

\paragraph{\bf Real Rel and horocycles.} Our study of real Rel flows is based on a study of their centralizers and normalizers. In our setting, we are most interested in the action of $\SL(2,\R)$ and its subgroups on $\wt{\Om}\cM_g(\kap)$. The horocycle flow and the geodesic flow are defined by the actions of
\be
u_s = \mat{1}{s}{0}{1}, \quad g_t = \mat{e^t}{0}{0}{e^{-t}},
\ee
respectively. Real Rel flows commute with the horocycle flow, and both are normalized by the geodesic flow.

One of the novelties in our approach to Theorem \ref{thm:denserealRel} is an application of a recent result in \cite{For:pushforward} to the study of real Rel flows. Suppose that the closure of $\Rel_{\R v}(X,\om)$ contains the horocycle through $(X,\om)$. Since the geodesic flow normalizes both real Rel flows and the horocycle flow, this property is invariant under the geodesic flow. By Corollary 1.3 in \cite{For:pushforward}, pushforwards of horocycle arcs under the geodesic flow equidistribute in the associated $\SL(2,\R)$-orbit closure, outside a sequence of times of zero upper density. By applying $g_{t_n}$ for an appropriate sequence of times $t_n$, we obtain real Rel flow orbits that become more and more dense in an $\SL(2,\R)$-orbit closure. A short argument using the Baire category theorem then yields the existence of a real Rel flow orbit whose closure contains an $\SL(2,\R)$-orbit closure. Thus, we obtain a simple and general criterion for the existence of a dense real Rel flow orbit in an $\SL(2,\R)$-orbit closure. We present this argument in Section \ref{sec:densecriterion}.

\begin{thm} \label{thm:densecriterion1}
Suppose $n > 1$, and fix a nonzero $v \in \R^n_0$. Fix $(X,\om) \in \wt{\Om}\cM_g(\kap)$, and let $\cM \subset \wt{\Om}\cM_g(\kap)$ be the $\SL(2,\R)$-orbit closure of $(X,\om)$. If $\Rel_{\R v} (X,\om)$ is contained in $\cM$ and the closure of $\Rel_{\R v} (X,\om)$ contains the horocycle through $(X,\om)$, then there exists $(Y,\eta) \in \cM$ such that $\Rel_{\R v}(Y,\eta)$ is dense in $\cM$.
\end{thm}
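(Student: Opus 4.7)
I will follow the outline sketched in the paragraph preceding the theorem; the plan has three stages.

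First I would verify that the hypothesis on $(X,\om)$ propagates under the parabolic subgroup $P = \{u_s g_t : s, t \in \R\}$. Since $g_t$ normalizes both $\Rel_{\R v}$ and the horocycle flow and $u_s$ commutes with $\Rel_{\R v}$, for every $s,t \in \R$ the closure of $\Rel_{\R v}(u_s g_t(X,\om))$ contains the horocycle orbit through $u_s g_t(X,\om)$; moreover, this translated Rel orbit remains defined for all time, being the image under $u_s g_t$ of $\Rel_{\R v}(X,\om)$.

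Next I would invoke Corollary 1.3 of \cite{For:pushforward}, which yields a sequence $t_n \to \infty$ along which the pushforward $g_{t_n} \cdot \{u_s(X,\om) : s \in [0,1]\}$ equidistributes in $\cM$ with respect to the natural $\SL(2,\R)$-invariant probability measure $\mu$. Writing $(X_n,\om_n) := g_{t_n}(X,\om)$ and using $g_{t_n} u_s g_{-t_n} = u_{e^{2t_n} s}$, this pushforward is the horocycle arc through $(X_n,\om_n)$ of length $e^{2t_n}$, which by the previous step lies inside $\ol{\Rel_{\R v}(X_n,\om_n)}$.

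Finally I would apply the Baire category theorem to $\cM$, which is a Baire space because it is locally compact Hausdorff. Fix a countable basis $\{U_k\}$ for the topology of $\cM$ and set
\be
A_k = \{(Y,\eta) \in \cM : \Rel_{tv}(Y,\eta) \text{ is defined and lies in } U_k \text{ for some } t \in \R\}.
\ee
Each $A_k$ is open by continuous dependence on initial conditions for $\Rel_{\R v}$. For density, given nonempty open $V \subset \cM$, choose $n$ large enough that the length-$e^{2t_n}$ arc through $(X_n,\om_n)$ meets both $V$ and $U_k$ (possible since $\mu(V), \mu(U_k) > 0$), pick $(Z_n,\zeta_n) := u_{s_n}(X_n,\om_n) \in V$ on the arc, and observe that by step one $\ol{\Rel_{\R v}(Z_n,\zeta_n)}$ contains the full horocycle orbit through $(Z_n,\zeta_n)$ and hence meets $U_k$; openness of $U_k$ places $(Z_n,\zeta_n)$ in $A_k \cap V$. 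Baire then furnishes a dense $G_\del$ of points whose Rel orbit meets every $U_k$, and intersecting with the dense $G_\del$ of basepoints where $\Rel_{\R v}$ is defined for all time produces the desired $(Y,\eta)$ with dense Rel orbit in $\cM$.

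The essential input is Forni's equidistribution theorem; the principal technical point is the partial definition of $\Rel_{\R v}$, which must be handled with care both in establishing openness of $A_k$ and in ensuring that the domain of definition of the flow is comeager, so that the Baire intersection produces a point whose orbit genuinely exists for all time.
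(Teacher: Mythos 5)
Your overall plan matches the paper's: use Forni's equidistribution of geodesic pushforwards of horocycle arcs to produce denser and denser Rel orbits in $\cM$, then run a Baire category argument on the sets $A_k$ of basepoints whose Rel orbit visits a given basic open $U_k$. The density of each $A_k$, via the witnesses $(Z_n,\zeta_n)=u_{s_n}g_{t_n}(X,\om)$ on an arbitrarily long pushed-forward arc, is argued correctly.

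However, there is a genuine gap in the openness of $A_k$, which you assert but do not justify, and which in fact requires an argument you omit. You define $A_k \subset \cM$ by requiring that the Rel orbit meet $U_k$, an open subset of $\cM$. Writing $U_k = W_k \cap \cM$ for some ambient open $W_k$, suppose $(Y,\eta)\in A_k$ with $\Rel_{t_0 v}(Y,\eta)\in U_k$. Continuity of the partially defined flow gives that for $(Y',\eta')\in\cM$ close to $(Y,\eta)$ in the ambient stratum one has $t_0 \in I(v,\eta')$ and $\Rel_{t_0 v}(Y',\eta')\in W_k$, but this gives $\Rel_{t_0 v}(Y',\eta')\in U_k$ only if you already know $\Rel_{t_0 v}(Y',\eta')\in\cM$. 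This invariance of $\cM$ under the (partially defined) Rel flow is not automatic; it is precisely where the hypothesis ``$\Rel_{\R v}(X,\om)\subset\cM$'' enters, and it is a hypothesis your proposal never actually uses. The paper establishes it as a preliminary step: by Eskin--Mirzakhani--Mohammadi the $P$-orbit of $(X,\om)$ is dense in $\cM$, so any $(Y,\eta)\in\cM$ is a limit $u_{s_m}g_{t_m}(X,\om)\to(Y,\eta)$; then for $t\in I(v,\eta)$,
\begin{equation*}
u_{s_m}g_{t_m}\,\Rel_{te^{-t_m}v}(X,\om) \;=\; \Rel_{tv}\,u_{s_m}g_{t_m}(X,\om) \;\to\; \Rel_{tv}(Y,\eta),
\end{equation*}
and each term on the left lies in $\cM$ because $\Rel_{\R v}(X,\om)\subset\cM$ and $\cM$ is $\SL(2,\R)$-invariant and closed. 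Without this step, $A_k$ need not be open in $\cM$, a countable intersection of dense-but-not-open sets need not be dense, and even if you widened the definition of $A_k$ to an ambient $W_k$ to recover openness, the final Baire point would only have a Rel orbit meeting each $W_k$, which does not imply $\Rel_{\R v}(Y,\eta)\subset\cM$ and hence does not give an orbit dense in $\cM$. You correctly flag the partial domain of definition as a technical point, but the more essential issue is the Rel-invariance of $\cM$, which you do not address.
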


Our second observation is that holomorphic $1$-forms $(X,\om)$ with a periodic horizontal foliation can provide explicit examples for which the closure of $\Rel_{\R v}(X,\om)$ contains the horocycle through $(X,\om)$. In this case, $X$ is covered by finitely many cylinders of horizontal closed geodesics along with finitely many horizontal geodesic segments between zeros. The horocycle through $(X,\om)$ is given by twisting each horizontal cylinder. The heights and circumferences of the horizontal cylinders, and the lengths of horizontal geodesic segments, are constant along the horocycle. The heights and circumferences of horizontal cylinders are also constant along a real Rel flow orbit, but the lengths of certain horizontal geodesic segments between zeros may change. However, if $v_i - v_j = 0$ whenever $Z_i$ and $Z_j$ are the endpoints of a horizontal geodesic segment, then $\Rel_{\R v}(X,\om)$ can also be described by twisting the horizontal cylinders. In this setting, both the horocycle flow and the real Rel flow can essentially be thought of as linear flows on a compact torus parametrized by twist parameters for the horizontal cylinders. This was previously described in \cite{HW:Rel} in the case of holomorphic $1$-forms with exactly two zeros. In Section \ref{sec:twist}, we show that if every horizontal cylinder is twisted along $\Rel_{\R v}(X,\om)$, and if distinct horizontal cylinders do not contain homologous closed geodesics, then it is possible to perturb the horizontal cylinder circumferences so that $\Rel_{\R v}(X,\om)$ is dense in this twist torus and so that the $\SL(2,\R)$-orbit of $(X,\om)$ is dense in the area-$1$ locus of its stratum component. For the latter density property, we rely on the explicit density criterion for $\GL^+(2,\R)$-orbits in strata from \cite{Wri:field}. Theorem \ref{thm:denserealRel} is thus reduced to the following construction which is carried out in Section \ref{sec:construction}.

\begin{thm} \label{thm:constr1}
Suppose $n > 1$, fix a nonzero $v \in \R^n_0$, and let $\wt{\cC}$ be a connected component of $\wt{\Om}\cM_g(\kap)$. There exists $(X,\om) \in \wt{\cC}$ with a periodic horizontal foliation satisfying the following properties.
\begin{enumerate}
    \item If $Z_i$ and $Z_j$ are the endpoints of a horizontal geodesic segment, then $v_i - v_j = 0$.
    \item If $Z_i$ and $Z_j$ are zeros in the top and bottom boundaries, respectively, of the same horizontal cylinder, then $v_i - v_j \neq 0$.
    \item Distinct horizontal cylinders do not contain homologous closed geodesics.
\end{enumerate}
\end{thm}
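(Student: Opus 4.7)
The plan is to build $(X,\om)$ directly as a union of horizontal cylinders glued along horizontal saddle connections, with the combinatorial pattern dictated by the level set decomposition of $v$. Partition the labels $\{1,\dots,n\}$ into the level sets $S_1,\dots,S_k$ of $v$; since $v\ne 0$ in $\R^n_0$, we have $k\ge 2$. Condition (1) then asks that the endpoints of every horizontal saddle connection lie in a common $S_\ell$; condition (2) asks that in every horizontal cylinder the zeros on the top boundary and those on the bottom boundary lie in distinct level sets; and condition (3) is a homological independence statement on the core curves of the cylinders.

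The construction proceeds in three layers. First, for each level set $S_\ell$, assemble a finite disjoint union of oriented circles partitioned into arcs whose vertices realize, with the correct cone angles, exactly the zeros labelled by $S_\ell$; these circles will play the role of top and bottom boundaries of the cylinders, and by construction condition (1) is automatic. Second, choose a pairing of top boundary circles with bottom boundary circles that always pairs circles carrying zeros from distinct $S_\ell$, and glue a cylinder of prescribed circumference and height between each such pair; this yields (2). Third, choose the cylinder circumferences and heights generically and arrange the topology of the pairing so that no two core curves represent the same class in $H_1(X;\Z)$ up to sign; together with a small generic perturbation this enforces (3).

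To land in a specified component $\wt{\cC}$, I would begin from one concrete model satisfying (1)--(3) in the ambient stratum, then apply local surgeries that preserve these three conditions while modifying the connected-component invariants: bubbling a handle inside a single cylinder (to change the genus), merging or splitting zeros along a horizontal saddle connection contained in one $S_\ell$ (to modify $\kap$ without breaking (1)), and a local shearing trick inside a single level set that flips the spin parity. The hyperelliptic component of $\Om\cM_g(g-1,g-1)$ requires a separate symmetric model whose hyperelliptic involution swaps the top and bottom of every cylinder and interchanges the two level sets; since $n=2$ there is only one allowed partition, so such a model can be written down explicitly.

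The main obstacle is reconciling the structural constraints (1)--(3) with the Kontsevich--Zorich classification. Conditions (1) and (2) severely restrict which zeros can appear on which cylinder boundaries, and this restriction can easily force the construction into the wrong spin or hyperelliptic component. The technical heart of the argument is therefore a case-by-case verification that the local surgeries above can be arranged inside a single level set, so that modifying the connected-component invariants never breaks the partition-compatibility of the horizontal structure. Once the model is produced, the labelling $Z_1,\dots,Z_n$ is chosen at the very end so that the level sets of $v$ match the labels assigned by the construction.
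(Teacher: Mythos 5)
Your organizing idea --- encode conditions (1) and (2) as a compatibility requirement between the separatrix diagram and the level-set partition of $v$, then fix the connected component and the homology constraint separately --- has the right shape, and it matches the role the level sets play in the paper's Case~5 reduction. But the proposal never actually produces a surface, and that is where essentially all of the mathematical content lies. Layers one and two assert the existence of a ribbon graph with boundary-component pairing in which every boundary circle is monochromatic in the level-set colouring, the pairing only joins circles of different colours, and the glued-up surface is closed, connected, of genus $g$, with zero orders $\kap$. None of this is free: a zero of order $k_i$ contributes $k_i+1$ corners to tops of cylinders and $k_i+1$ corners to bottoms, these must distribute over boundary circles consistently with the colouring, and the global connectivity and genus constraints are not automatic. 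The paper spends the bulk of its proof (the slit-tori connected sum of Case~1, then the modifications of Cases~2--4) constructing one explicit such surface per case and verifying (1)--(3) by hand. Asserting that a suitable model "can be written down explicitly," without writing it, is exactly the gap.

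Two further problems. The claim that a "small generic perturbation" helps enforce~(3) is false: whether two cylinders contain homologous core curves is determined by the combinatorics of the separatrix diagram alone. A homology relation between core curves forces an identity of circumferences that persists under every deformation inside the stratum, so perturbing lengths and heights cannot break it; the paper instead verifies non-homologousness by an explicit computation of the classes $a_1,\dots,a_{g+n-1}\in H_1(X;\Z)$ for its model. And the surgeries you invoke to reach a prescribed component --- a "local shearing trick" that flips spin parity and a "separate symmetric model" for the hyperelliptic locus --- are precisely the delicate part, since for instance the paper's spin-flip surgery (Case~2) changes which zeros border which cylinders, so condition~(2) must be re-verified after the surgery, and the hyperelliptic construction (Case~3) requires new vertical and horizontal cylinder bookkeeping. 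You correctly flag this as the technical heart, but then leave it at the level of a slogan. As written this is a plan for a proof, not a proof.
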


The constructions in the proof of Theorem \ref{thm:constr1} are rather involved and occupy the bulk of this paper. Briefly, we first use an intricate connected sum of tori to establish the case where every $k_j \in \kap$ satisfies $k_j \leq g - 1$ and the components of $v$ are distinct, and we then deal with the remaining cases using well-known surgeries.

Our second main result is an explicit density result for ``complexified'' real Rel flow orbits. A real vector subspace $V$ of $\C^n_0$ determines a foliation $\cA_V(\kap)$ of $\wt{\Om}\cM_g(\kap)$ whose leaves project into leaves of $\cA(\kap)$. Our second main result provides explicit dense leaves of many subfoliations of $\cA(\kap)$ with complex $1$-dimensional leaves in all stratum components with multiple zeros.

\begin{thm} \label{thm:denseV}
Suppose $n > 1$, fix a nonzero $v \in \R_0^n$ with distinct components, and let $V = \C v$. Let $\wt{\cC}_1$ be a connected component of $\wt{\Om}_1\cM_g(\kap)$. There is an explicit $(X,\om) \in \wt{\cC}_1$ with periodic horizontal and vertical foliations, such that the leaf of $\cA_V(\kap)$ through $(X,\om)$ is dense in $\wt{\cC}_1$.
\end{thm}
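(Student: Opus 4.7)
The strategy is to explicitly construct $(X,\om) \in \wt{\cC}_1$ with periodic horizontal and periodic vertical foliations, both satisfying the conditions of Theorem \ref{thm:constr1} with respect to $v$. Since $V = \C v$ has complex dimension one, the leaf of $\cA_V(\kap)$ through $(X,\om)$ is the real two-dimensional orbit of complex Rel along $v$, and contains both the real Rel orbit $\Rel_{\R v}(X,\om)$ (which acts by twisting horizontal cylinders) and the imaginary Rel orbit $\Rel_{\R iv}(X,\om)$ (which, after the rotation $\om \mapsto i\om$, acts by twisting vertical cylinders). The plan is to use these two families of twists together with the saturation of the leaf closure under the foliation to force density in $\wt{\cC}_1$.

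\emph{Step 1 (enhanced construction).} Refine the cylinder-sum construction underlying Theorem \ref{thm:constr1} to produce an explicit $(X,\om) \in \wt{\cC}_1$ with both foliations periodic and conditions (1)--(3) of Theorem \ref{thm:constr1} holding with respect to $v$ for each foliation. The hypothesis that the components of $v$ are distinct is what provides enough flexibility to arrange condition (2) in both directions simultaneously. Choose cylinder heights and circumferences with appropriate rational-independence so that (a) $\Rel_{\R v}(X,\om)$ equidistributes in the horizontal twist torus $T_H$, which by the analysis of Section \ref{sec:twist} contains the horocycle $\{u_s(X,\om)\}_{s \in \R}$; (b) analogously, $\Rel_{\R iv}(X,\om)$ equidistributes in the vertical twist torus $T_V$, which contains the orbit of $(X,\om)$ under the lower-triangular unipotent subgroup; and (c) the $\SL(2,\R)$-orbit of $(X,\om)$ is dense in $\wt{\cC}_1$ by the explicit density criterion of \cite{Wri:field}.

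\emph{Step 2 (density of the leaf).} Let $L$ be the leaf of $\cA_V(\kap)$ through $(X,\om)$. As the closure of a leaf of a smooth foliation, $\ol{L}$ is saturated, i.e., a union of leaves. By (a), $\ol{L}$ contains $u_s(X,\om)$ for every $s \in \R$. Since $u_s$ preserves the vertical foliation, $u_s(X,\om)$ still has periodic vertical foliation with the same cylinders as $(X,\om)$; its imaginary Rel orbit in direction $v$ lies in its own $\cA_V$-leaf, hence in $\ol{L}$ by saturation, and equidistributes in its vertical twist torus. Consequently $\ol{L}$ contains a two-parameter family transverse to the horocycle inside $\SL(2,\R) \cdot (X,\om)$. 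Combined with (c) and the Forni equidistribution statement (Corollary 1.3 of \cite{For:pushforward}) applied as in the proof of Theorem \ref{thm:densecriterion1}, this yields $\ol{L} = \wt{\cC}_1$.

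The main obstacle is that $\ol{L}$ itself is not $\SL(2,\R)$-invariant, so density cannot be propagated simply by pushing $\ol{L}$ forward under the geodesic flow. The remedy is to exploit the saturation of $\ol{L}$, which supplies a whole family of $\cA_V$-leaves through a two-parameter transverse set of points; this transverse set provides sufficient initial conditions for the Forni equidistribution to conclude density in the $\SL(2,\R)$-orbit closure. Verifying that the horizontal and vertical bootstrap arguments interlock without destroying the cylinder-decomposition structure required at each stage, and confirming that the two equidistribution conditions in (a) and (b) can be ensured simultaneously by an explicit choice of cylinder parameters, is the technical heart of the proof.
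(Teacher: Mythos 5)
Your Step 1 and the beginning of Step 2 match the paper's strategy: build an explicit $(X,\om)$ with periodic horizontal and vertical foliations both satisfying the conditions of Theorem~\ref{thm:constr1}, choose cylinder parameters with the right $\Q$-independence so that $\ol{\Rel_{\R v}(X,\om)}$ contains $u_\R(X,\om)$ and $\ol{\Rel_{i\R v}(X,\om)}$ contains $v_\R(X,\om)$, and arrange period-coordinate independence over $\ol{\Q}\cap\R$ so that Theorem~\ref{thm:Wdense} gives a dense $\SL(2,\R)$-orbit. But the rest of Step 2 takes a wrong turn. First, $u_s$ does \emph{not} preserve the vertical foliation --- it preserves only the horizontal one --- so $u_s(X,\om)$ generically has no periodic vertical foliation and no vertical twist torus. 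More importantly, the appeal to \cite{For:pushforward} is both unnecessary and misaimed. You write that ``$\ol{L}$ itself is not $\SL(2,\R)$-invariant,'' but the ingredients you have already assembled show that it \emph{is}: the closure $\ol{L}$ is saturated (a union of $\cA_V(\kap)$-leaves), and since $V = \C v$ the $\GL^+(2,\R)$-action permutes $\cA_V(\kap)$-leaves; thus $u_\R(X,\om)\subset\ol{L}$ forces $u_s L$ (the leaf through $u_s(X,\om)$) to lie in $\ol{L}$, hence $u_s\ol{L}=\ol{L}$ for all $s$, and similarly $v_s\ol{L}=\ol{L}$; since $u_\R$ and $v_\R$ generate $\SL(2,\R)$, the set $\ol{L}$ is $\SL(2,\R)$-invariant and therefore contains the dense $\SL(2,\R)$-orbit. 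This is exactly the paper's argument for Theorem~\ref{thm:denseV}, which uses no equidistribution at all. Forni's result is needed only in Theorem~\ref{thm:denserealRel}, where the real Rel orbit closure contains just one horocycle direction and invariance under a full $\SL(2,\R)$ cannot be extracted.

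There is also a substantial gap in Step 1. Achieving conditions (1)--(3) of Theorem~\ref{thm:constr1} simultaneously for the horizontal and vertical foliations is possible only when every zero order satisfies $k_j \leq g-1$ (Cases 1--3 in the paper's proof of Theorem~\ref{thm:constr1}). When some $k_j > g - 1$, the surgery used in Case~4 necessarily produces cylinders whose top and bottom boundaries both pass through the same zero $Z_n$, violating condition~(2), and no single periodic surface in the stratum component satisfies the conditions in both directions via this construction. Roughly half of the paper's proof of Theorem~\ref{thm:denseV} is devoted to this case: it builds a surface with the needed structure in one direction only, then follows a carefully chosen finite sequence of imaginary Rel moves $\Rel_{-i\eps_j v}$ that cross cylinder degenerations, showing at each stage that a different collection of horizontal cylinders becomes twistable and lies in $\ol{L}$, so that piecing these twist tori together eventually yields the full horizontal twist torus (hence the horocycle) in $\ol{L}$, with an analogous chain of real Rel moves giving the opposite horocycle. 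Your proposal does not engage with this case at all, so as written it covers only strata with all $k_j \leq g-1$.
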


A novelty in our approach to Theorem \ref{thm:denseV} is that it is based on analyzing periodic foliations in multiple directions simultaneously. Much of the proof of Theorem \ref{thm:denseV} is contained in the proof of Theorem \ref{thm:constr1}. In most cases, the holomorphic $1$-forms we construct here have a periodic vertical foliation with exactly analogous properties. In particular, we end up constructing explicit holomorphic $1$-forms $(X,\om)$ such that the closure of $\Rel_{\R v}(X,\om)$ contains the horocycle through $(X,\om)$, and such that the closure of $\Rel_{i\R v}(X,\om)$ contains the opposite horocycle through $(X,\om)$.

We expect that the hypothesis that $v$ has distinct components can be removed. However, with our approach, it seems that doing so would greatly increase the length of this paper, and we decided not to pursue this. Part of our motivation for proving Theorem \ref{thm:denseV} is as a complement to Theorem 1.2 in \cite{Win:ergodic}, which produces explicit full measure sets of dense leaves of $\cA(\kap)$ in $\Om_1\cM_g(\kap)$. The holomorphic $1$-forms in these dense leaves of $\cA(\kap)$ are in some sense as far as possible from having a periodic foliation in any direction. We hope that the methods in these two papers can be combined to make progress toward classifying the closures of leaves of $\cA(\kap)$ and $\cA_V(\kap)$. Lastly, we remark that the criterion in Theorem \ref{thm:densecriterion1} can be readily verified for other $\SL(2,\R)$-orbit closures, and we briefly discuss this in Section \ref{sec:construction} as well. \\

\paragraph{\bf Notes and references.} In \cite{BSW:horocycle} and \cite{MW:cohomology}, it is shown that the only obstruction to a real Rel flow orbit being well-defined for all time comes from horizontal saddle connections. Related completeness results for leaves of $\cA(\kap)$ are proven in \cite{McM:navigating} and \cite{McM:isoperiodic}. In the terminology of \cite{McM:navigating}, Theorem \ref{thm:denserealRel} implies the existence of dense relative period geodesics in the area-$1$ locus of every connected component of $\Om\cM_g(k_1,\dots,k_n)$ when $n > 1$.

The study of real Rel flows and the horocycle flow on strata are often intertwined. In \cite{BSW:horocycle}, real Rel flows play an important role in the classification of orbit closures and invariant measures for the horocycle flow on the eigenform loci in $\Om\cM_2(1,1)$. It would be interesting to see if closures of real Rel flow orbits in these eigenform loci can also be classified. In \cite{MW:horocycle}, it is shown that horocycles in strata do not diverge, and that almost every holomorphic $1$-form in a horocycle has a uniquely ergodic vertical foliation. Similar results for families of interval exchange transformations arising from horocycles are proven in \cite{MW:cohomology}. In contrast, \cite{HW:Rel} gives an example of a real Rel flow orbit that is well-defined for all time but diverges in its stratum. Moreover, in \cite{HW:Rel}, it is shown that the Arnoux-Yoccoz surface in genus $3$ gives an example for which there is a unique holomorphic $1$-form in a real Rel flow orbit with a uniquely ergodic vertical foliation, while all others have a periodic vertical foliation. Other exotic behaviors of vertical foliations along relative period geodesics are exhibited in \cite{McM:isoperiodic} and \cite{McM:cascades}. In \cite{CSW:tremor}, it is shown that closures of horocycles in $\Om\cM_2(1,1)$ can have non-integer Hausdorff dimension. In light of \cite{CSW:tremor}, it would be interesting to see if closures of real Rel flow orbits in $\Om\cM_2(1,1)$ can also have non-integer Hausdorff dimension.

In \cite{Ygo:dense}, a related criterion to our Theorem \ref{thm:densecriterion1} is given for the density of leaves of the absolute period foliation in the special case of affine invariant manifolds of rank $1$ (defined in \cite{Wri:cylinder}). The criterion in \cite{Ygo:dense} is based on finding horizontally periodic surfaces where the horizontal cylinders can be twisted while staying in the absolute period leaf and accumulating on a horocycle. Horocycle invariance of the leaf closure is promoted to $\SL(2,\R)$-invariance, and thus density due to the rank $1$ condition, using the existence of holomorphic $1$-forms with a hyperbolic Veech group element in every absolute period leaf in rank $1$. A key difference in our criterion is the use of Corollary 1.3 in \cite{For:pushforward}, which is needed to obtain a density result for real Rel flow orbits (as opposed to the full absolute period foliation) as well as to obtain a general result for $\SL(2,\R)$-orbit closures. Additionally, hyperbolic Veech group elements do not play a role in our arguments. Jon Chaika and Barak Weiss have informed us of work in progress in which they prove that real Rel flows are ergodic on connected components of $\wt{\Om}_1\cM_g(\kap)$ when $n > 1$, conditional on an extension of the results of \cite{EM:stationary} to products of strata. We remark that their ergodicity result does not imply Theorem \ref{thm:denseV}, since holomorphic $1$-forms in a stratum with a periodic foliation form a measure zero subset.

The dynamics of $\cA(\kap)$ on $\Om_1\cM_g(\kap)$ have been more extensively studied, for instance, in \cite{CDF:transfer}, \cite{Ham:ergodicity}, \cite{HW:Rel}, \cite{McM:isoperiodic}, \cite{Win:ergodic}, \cite{Ygo:dense}. Ergodicity of $\cA(1,\dots,1)$ on $\Om_1\cM_g(1,\dots,1)$ is proven in \cite{McM:isoperiodic} for $g = 2,3$, and for all $g \geq 2$ in \cite{CDF:transfer}, \cite{Ham:ergodicity}. Moreover, a classification of closures of leaves of $\cA(1,\dots,1)$ is given in \cite{CDF:transfer}. In \cite{HW:Rel} and \cite{Ygo:dense}, examples of dense leaves are given in a certain connected component of $\Om_1\cM_g(g-1,g-1)$ and in $\Om_1\cM_3(2,1,1)$. In \cite{Win:ergodic}, ergodicity of $\cA(\kap)$ on the area-$1$ locus of connected strata with $n > 1$ and the non-hyperelliptic connected component of $\Om\cM_g(g-1,g-1)$ for $g$ even is proven. Moreover, in \cite{Win:ergodic}, explicit full measure sets of dense leaves are given in these loci. \\

\paragraph{\bf Acknowledgements.} The author thanks Curt McMullen for encouragement and comments on earlier versions of this work. This material is based upon work supported by the National Science Foundation Graduate Research Fellowship Program under grant DGE-1144152.


\section{Background} \label{sec:background}

We review the flat geometry of holomorphic $1$-forms, strata of holomorphic $1$-forms, the $\GL^+(2,\R)$-action on strata, and the absolute period foliation of a stratum. We refer to \cite{BSW:horocycle}, \cite{McM:navigating}, and \cite{Zor:survey} for further background. \\

\paragraph{\bf Flat geometry.} Let $X$ be a closed Riemann surface of genus $g \geq 2$, and let $\om$ be a nonzero holomorphic $1$-form on $X$. The zero set $Z(\om)$ is finite, and the orders of the zeros form a partition $\kap = (k_1,\dots,k_n)$ of $2g-2$. For each $x \in X \sm Z(\om)$, there is a simply connected open neighborhood $U$ of $x$ and an injective holomorphic map $\phi : U \ra \C$ given by $\phi(z) = \int_x^z \om$ and satisfying $\om = \phi^\ast (dz)$. These maps provide an atlas of charts on $X \sm Z(\om)$ whose transition maps are translations. Translation-invariant structures on $\C$ can be pulled back to structures on $X$ with singularities at the zeros of $\om$. In particular, associated to the pair $(X,\om)$ is a flat metric with a cone point of angle $2\pi(k+1)$ at a zero of order $k$. Additionally, the foliations of $\C$ by horizontal and vertical lines determine a {\em horizontal foliation} and a {\em vertical foliation} of $(X,\om)$ each with $2k+2$ singular leaves meeting at a zero of order $k$.

A {\em saddle connection} on $(X,\om)$ is an oriented geodesic segment with endpoints in $Z(\om)$ and otherwise disjoint from $Z(\om)$. Any closed geodesic in $X \sm Z(\om)$ is contained in a maximal open {\em cylinder} given by a union of freely homotopic parallel closed geodesics. The boundary of a cylinder is a finite union of parallel saddle connections. Any cylinder $C$ is isometric to a unique Euclidean cylinder of the form $\R / w \Z \times (0,h)$ with $h,w > 0$, and $h$ and $w$ are the {\em height} and {\em circumference} of $C$, respectively. A {\em horizontal cylinder} is a cylinder containing a closed geodesic $\al$ such that $\int_\al \om \in \R_{>0}$. The orientation of $\al$ determines a {\em top boundary} of $C$ and a {\em bottom boundary} of $C$, which are not necessarily disjoint. Given a saddle connection $\gam$ which crosses a horizontal cylinder $C$ from bottom to top, the {\em twist parameter} of $C$ with respect to $\gam$ is defined by ${\rm Re} \int_\gam \om \in \R / w\Z$. We similarly define a {\em vertical cylinder} with $i\R_{>0}$ in place of $\R_{>0}$, as well as the {\em left boundary}, the {\em right boundary}, and the {\em twist parameter} of a vertical cylinder with respect to a saddle connection crossing the cylinder from right to left. Two cylinders are {\em homologous} if they contain closed geodesics that represent the same element of $H_1(X;\Z)$. The horizontal foliation of $(X,\om)$ is {\em periodic} if every leaf is compact. In this case, $(X,\om)$ is a union of finitely many disjoint horizontal cylinders and finitely many horizontal saddle connections. Similarly for the vertical foliation of $(X,\om)$. \\

\paragraph{\bf Strata.} The moduli space of holomorphic $1$-forms $\Om\cM_g$ classifies pairs $(X,\om)$ as above. The space $\Om\cM_g$ is a union of {\em strata} $\Om\cM_g(\kap)$ indexed by partitions $\kap = (k_1,\dots,k_n)$ of $2g-2$. A holomorphic $1$-form is in $\Om\cM_g(\kap)$ if and only if it has exactly $n$ distinct zeros of orders $k_1,\dots,k_n$. The bundle of relative homology groups $H_1(X,Z(\om);\Z)$ is locally trivial over $\Om\cM_g(\kap)$. Given $(X_0,\om_0) \in \Om\cM_g(\kap)$, we can define {\em period coordinates} on a small neighborhood of $(X_0,\om_0)$ by $(X,\om) \mapsto [\om] \in H^1(X_0,Z(\om_0);\C)$. By choosing a basis $\gam_1,\dots,\gam_{2g+n-1}$ for $H_1(X_0,Z(\om_0);\Z)$, we obtain a map
\be
(X,\om) \mapsto \left(\int_{\gam_1} \om,\dots,\int_{\gam_{2g+n-1}} \om\right) \in \C^{2g+n-1}
\ee
and the components $\int_{\gam_j} \om$ are called the {\em period coordinates of $(X,\om)$}. Period coordinates give $\Om\cM_g(\kap)$ the structure of a complex orbifold of dimension $2g + n - 1$. The {\em area} of $(X,\om)$ is given by $\frac{i}{2}\int_X \om \wedge \ol{\om}$, and the area-$1$ locus of $\Om\cM_g(\kap)$ is denoted by $\Om_1\cM_g(\kap)$.

We recall the classification of connected components of strata from \cite{KZ:components}. Suppose $k_1,\dots,k_n$ are even. Let $\gam \subset X \sm Z(\om)$ be a smooth oriented closed loop. Using the translation structure on $X \sm Z(\om)$, the {\em index} $\ind(\gam)$ is defined as $1/2\pi$ times the total change in angle along the loop $\gam$. In other words, $\ind(\gam)$ is the degree of the Gauss map $\gam \ra S^1$. Let $\{\al_j,\bet_j\}_{j=1}^g$ be a collection of smooth oriented closed loops in $X \sm Z(\om)$ that represents a symplectic basis for $H_1(X;\Z)$ with respect to the algebraic intersection form. The {\em parity of the spin structure} $\phi(\om)$ is defined by
\begin{equation} \label{eq:spin}
\phi(\om) = \sum_{j=1}^g (\ind(\al_j) + 1)(\ind(\bet_j) + 1) \hmod 2
\end{equation}
and is independent of the choice of symplectic basis of $H_1(X;\Z)$ and the choice of representative loops. Moreover, $\phi(\om)$ is an invariant of the connected component of $(X,\om)$ in $\Om\cM_g(\kap)$. A connected component is {\em even} or {\em odd} if it consists of holomorphic $1$-forms $(X,\om)$ with $\phi(\om) = 0$ or $\phi(\om) = 1$, respectively. A connected component is {\em hyperelliptic} if it consists of holomorphic $1$-forms on hyperelliptic Riemann surfaces with a unique zero, or if it consists of holomorphic $1$-forms on hyperelliptic Riemann surfaces with exactly two zeros that have equal order and are exchanged by the hyperelliptic involution. A connected component that is not hyperelliptic is {\em nonhyperelliptic}.

\begin{thm} \label{thm:KZ} (\cite{KZ:components}, Theorems 1-2, Corollary 5) For $g \geq 4$, the connected components of $\Om\cM_g(\kap)$ are given as follows.
\begin{enumerate}
    \item If $\kap = (2g-2)$ or $(g-1,g-1)$, then $\Om\cM_g(\kap)$ has a unique hyperelliptic component.
    \item If all $k_j$ are even, then $\Om\cM_g(\kap)$ has exactly two nonhyperelliptic components: one even component and one odd component.
    \item If some $k_j$ is odd, then $\Om\cM_g(\kap)$ has a unique nonhyperelliptic component.
\end{enumerate}
If $g = 3$ and some $k_j$ is odd, then $\Om\cM_g(\kap)$ is connected. If $g = 3$ and all $k_j$ are even, then $\Om\cM_g(\kap)$ has exactly two components: one odd component, and one hyperelliptic component which is also an even component. If $g = 2$, then $\Om\cM_g(\kap)$ is connected.
\end{thm}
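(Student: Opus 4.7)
The plan is to verify the classification by (i) showing that the spin parity and the hyperelliptic property are invariants of connected components, and (ii) showing that any two strata elements with the same values of these invariants can be joined by a path. Part (i) gives the necessary inequalities on the number of components, and part (ii) gives the matching upper bounds. Throughout I would work on the labelled cover, though at the end one must check that the statements descend correctly.

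For part (i), I would first show that the parity $\phi(\om)$ defined by the quadratic form $q$ on $H_1(X;\F_2)$ is a homotopy invariant of the translation structure on $X \sm Z(\om)$. The index $\ind(\gam)$ is additive under homological sum modulo $2$ when all $k_j$ are even, so formula (\ref{eq:spin}) agrees with the Arf invariant of $q$ and thus is independent of the symplectic basis. Local constancy then follows because small perturbations of $(X,\om)$ in period coordinates keep any fixed smooth representative loops in $X \sm Z(\om)$, and continuity of the Gauss map argument forces $\ind(\gam)$ to stay constant. For hyperellipticity, I would note that the hyperelliptic involution extends to a whole neighborhood in the stratum only in the two listed cases, and that admitting such an involution is a closed condition that is also open in those strata (by a dimension count on the locus of hyperelliptic translation surfaces), hence a union of components. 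This gives the correct lower bounds on the number of components claimed in Theorem \ref{thm:KZ}.

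For part (ii), the strategy is to reduce to low-genus base cases by surgery. The standard toolkit: (a) every $(X,\om)$ can be deformed inside its component to a \emph{horizontally periodic} surface (via a Veech-style compactness or density argument, or by a small perturbation to rational periods), and even to a square-tiled surface; (b) the operations of ``breaking up a zero'' and its inverse ``collision of zeros'' allow one to move between strata $\Om\cM_g(\ldots,k,\ldots)$ and $\Om\cM_g(\ldots,k',k'',\ldots)$ with $k'+k''=k$; (c) the operation of ``bubbling a handle'' at a zero of order $k$ connects $\Om\cM_g(\ldots,k,\ldots)$ with $\Om\cM_{g+1}(\ldots,k+2,\ldots)$ and can be done so as to realize either parity of the spin structure. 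Using (b) and (c), one reduces the connectedness problem to the minimal strata $\Om\cM_g(2g-2)$, where I would invoke the low-genus base cases: for $g=2$ connectedness of $\Om\cM_2(2)$ is elementary; for $g=3$ the parity of spin gives two pieces in $\Om\cM_3(4)$ (hyperelliptic is even); and for $g \geq 4$ one must isolate the three pieces of $\Om\cM_g(2g-2)$.

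The main obstacle is precisely step (ii) at the level of minimal strata: proving that the non-hyperelliptic odd and even components of $\Om\cM_g(2g-2)$ are each connected. The Kontsevich-Zorich approach is to exhibit, for each parity, an explicit horizontally periodic model (e.g.\ a specific cylinder decomposition arising from a Jenkins-Strebel differential) and to show that any horizontally periodic $(X,\om)$ with that parity can be deformed to the chosen model by a sequence of cylinder-preserving moves together with Dehn twists in horizontal cylinders. Controlling the spin parity through these moves is delicate: each move must be checked against the formula (\ref{eq:spin}) via the effect on $\ind$ of representative loops that traverse the modified cylinders. Once the minimal strata are handled, running the inductive surgeries (b)-(c) in reverse yields connectedness of the non-hyperelliptic pieces of every stratum with the prescribed parity, matching the lower bound from (i) and completing the classification.
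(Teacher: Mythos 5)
This theorem is imported verbatim from Kontsevich--Zorich \cite{KZ:components} (their Theorems 1--2 and Corollary 5); the paper you are reading does not prove it, so there is no in-paper argument to compare against. Your outline is a fair high-level summary of the Kontsevich--Zorich strategy: show that the parity of the spin structure and hyperellipticity are invariants of a stratum component, then prove connectedness of each invariant class via surgeries (zero collision/splitting, handle bubbling) that reduce to the minimal strata $\Om\cM_g(2g-2)$, which are handled through explicit Jenkins--Strebel representatives and combinatorial moves on separatrix diagrams.

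As a proof rather than a plan, though, the sketch has real gaps. Nearly all of the content of \cite{KZ:components} sits in the step you flag as ``the main obstacle''---connectedness of each nonhyperelliptic piece of $\Om\cM_g(2g-2)$---and you give no argument for it; this step requires a classification and a normal-form scheme for separatrix diagrams compatible with a given spin parity, which occupies several sections of the original paper. Two smaller cautions: the claim that the hyperelliptic locus is a union of components \emph{only} in the strata $(2g-2)$ and $(g-1,g-1)$ should not be dispatched by ``a dimension count''---one must show that in those two strata every nearby deformation of a hyperelliptic form remains hyperelliptic (the involution is forced to extend and the differential to be anti-invariant), while in every other stratum the hyperelliptic forms are a positive-codimension sublocus; and the inductive reduction to minimal strata via collision of zeros must explicitly track both invariants through each surgery, as in Lemma 11 of \cite{KZ:components}, which the present paper invokes elsewhere for exactly that purpose.
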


\paragraph{\bf The $\GL^+(2,\R)$-action.} Let $\GL^+(2,\R)$ be the group of automorphisms of the vector space $\R^2$ with positive determinant. The standard $\R$-linear action of $\GL^+(2,\R)$ on $\C$ induces an action on $\Om\cM_g$, and this action preserves each stratum. The action of the subgroup $\SL(2,\R)$ also preserves the area-$1$ locus. For $s,t \in \R$, define
\be
g_t = \mat{e^t}{0}{0}{e^{-t}}, \quad u_s = \mat{1}{s}{0}{1}, \quad v_s = \mat{1}{0}{s}{1} .
\ee
The action of the diagonal subgroup $g_t$ is the {\em geodesic flow}, and the action of the unipotent subgroup $u_s$ is the {\em horocycle flow}. The subgroups $u_s$ and $v_s$ generate $\SL(2,\R)$.

In \cite{EMM:closures}, rigidity theorems are proven for $\GL^+(2,\R)$-orbit closures in strata. In particular, orbit closures are properly immersed suborbifolds locally defined by homogeneous $\R$-linear equations in period coordinates. Building off of this work, explicit full measure subsets of dense $\GL^+(2,\R)$-orbits in connected components of strata are given in \cite{Wri:field}. Let $\ol{\Q} \subset \C$ be the algebraic closure of $\Q$.

\begin{thm} \label{thm:Wdense} (\cite{Wri:field}, Corollary 1.3)
If the period coordinates of $(X,\om) \in \Om\cM_g(\kap)$ are linearly independent over $\ol{\Q} \cap \R$, then the $\GL^+(2,\R)$-orbit of $(X,\om)$ is dense in its connected component in $\Om\cM_g(\kap)$.
\end{thm}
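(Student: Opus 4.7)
The plan is to combine the rigidity theorem of Eskin--Mirzakhani--Mohammadi (EMM) with the number-theoretic refinement due to Wright on the field of definition of affine invariant submanifolds. Let $\cM$ be the $\GL^+(2,\R)$-orbit closure of $(X,\om)$ in $\Om\cM_g(\kap)$, and let $\wt{\cC}$ be the connected component of the stratum containing $(X,\om)$. The goal is to show $\cM = \wt{\cC}$.

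First, by the EMM theorem (mentioned in the paragraph preceding the statement), $\cM$ is a properly immersed suborbifold, and near $(X,\om)$ it is cut out inside period coordinates $H^1(X, Z(\om); \C) \cong \C^{2g+n-1}$ by a system of homogeneous $\R$-linear equations on the real and imaginary parts of the periods. The tangent space $T_{(X,\om)}\cM$ is therefore a real linear subspace $W \subset H^1(X, Z(\om); \C)$ (viewed as a real vector space of dimension $4g+2n-2$), and $\cM$ itself is locally the image of an open subset of $W$ under the period map.

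Second, I would apply Wright's key theorem on the field of definition: there is a number field $k(\cM) \subset \R$, in fact contained in $\ol{\Q} \cap \R$, such that the tangent subspace $W$ is defined over $k(\cM)$ — that is, the linear equations cutting out $W$ can be chosen with coefficients in $\ol{\Q} \cap \R$. This is the deep step and is precisely where the bulk of Wright's work goes; in a proposal I would cite it as a black box. With this in hand, the hypothesis finishes the argument: the real and imaginary parts of the period coordinates of $(X,\om)$ are linearly independent over $\ol{\Q}\cap\R$, so the only homogeneous $(\ol{\Q}\cap\R)$-linear equation they satisfy is the trivial one. Hence $W$ must equal the full space $H^1(X,Z(\om);\C)$, which forces $\cM$ to be open in $\wt{\cC}$. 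Since $\cM$ is also closed in $\wt{\cC}$ and $\wt{\cC}$ is connected, we conclude $\cM = \wt{\cC}$, proving density.

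The main obstacle is of course the field-of-definition theorem itself, which is highly non-trivial; it rests on the measure classification of EMM together with Wright's analysis of the Forni bundle and the cylinder deformation theorem. However, granting that input, the density claim in the theorem is a short linear-algebra consequence: the hypothesis precisely rules out $(X,\om)$ lying in any proper affine invariant submanifold, since all such submanifolds are cut out by $\R$-algebraic linear relations among the periods.
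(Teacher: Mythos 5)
This theorem is not proved in the paper; it is quoted verbatim as Corollary 1.3 of \cite{Wri:field} and used as a black box, so there is no in-paper proof to compare against. Your proposal is a sound reconstruction of Wright's argument: reduce to the EMM rigidity theorem (orbit closure is an affine invariant submanifold, locally linear in period coordinates), invoke Wright's field-of-definition theorem, then conclude by pure linear algebra that the orbit closure must be open, hence, being also closed and nonempty, equal to the connected component.

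One wobble deserves flagging. You assert that the hypothesis gives linear independence over $\ol{\Q}\cap\R$ of the \emph{real and imaginary parts} of the period coordinates. That is not what the hypothesis says, nor does it follow from it: the complex numbers $1$ and $i$ are linearly independent over $\R$, but the four reals $(\re,\im)=(1,0)$ and $(0,1)$ are not. What rescues the argument is the full strength of the structure theorem for affine invariant submanifolds: the tangent space at $(X,\om)$ is not merely an $\R$-linear subspace of $H^1(X,Z(\om);\C)$ but has the specific form $\C\otimes_\R W$ for a real subspace $W\subset H^1(X,Z(\om);\R)$, and Wright proves $W$ is defined over a real number field $k(\cM)\subset\ol{\Q}\cap\R$. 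Hence every defining equation of the orbit closure at $(X,\om)$ can be written as $\sum_j a_j z_j = 0$ with $a_j\in\ol{\Q}\cap\R$ and $z_j = \int_{\gam_j}\om$ the complex period coordinates. The hypothesis is precisely that no such nontrivial equation holds, so the tangent space is everything and the rest of your argument (open, closed, connected) applies. With this corrected phrasing, your proposal is complete and matches the intended route.
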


See \cite{Wri:field} for a more general result. We only stated the special case that we will use. \\

\paragraph{\bf The absolute period foliation.} Given $(X,\om) \in \Om\cM_g(\kap)$, the projection from relative to absolute cohomology
\be
p : H^1(X,Z(\om);\C) \ra H^1(X;\C)
\ee
defines a holomorphic submersion on a neighborhood of $(X,\om)$ in $\Om\cM_g(\kap)$. The fibers have complex dimension $n - 1$ and form the leaves of a holomorphic foliation $\cA(\kap)$ called the {\em absolute period foliation} of $\Om\cM_g(\kap)$. Holomorphic $1$-forms on the same leaf of $\cA(\kap)$ have the same area. The action of $\GL^+(2,\R)$ sends leaves of $\cA(\kap)$ to leaves of $\cA(\kap)$.

Consider the finite cover
\be
\wt{\Om}\cM_g(\kap) \ra \Om\cM_g(\kap)
\ee
obtained by labelling the zeros $Z_1,\dots,Z_n$ of holomorphic $1$-forms in $\Om\cM_g(\kap)$. The foliation $\cA(\kap)$ lifts to a foliation of $\wt{\Om}\cM_g(\kap)$, which we call the {\em absolute period foliation} of $\wt{\Om}\cM_g(\kap)$. Consider the quotient vector spaces
\be
\C^n_0 = \C^n / \C(1,\dots,1), \quad \R^n_0 = \R^n / \R(1,\dots,1) ,
\ee
where $(1,\dots,1)$ is the constant vector. The labelling of zeros provides a canonical identification of the kernel of $p$ with $\C^n_0$. Note that for $v \in \C_0^n$, the difference between two components $v_i - v_j$ is well-defined. The corresponding element of the kernel of $p$ evaluates to $v_i - v_j$ on any path from $Z_j$ to $Z_i$. Associated to a real vector subspace $V \subset \C^n_0$ is then a foliation $\cA_V(\kap)$ of $\wt{\Om}\cM_g(\kap)$ whose leaves are contained in leaves of the absolute period foliation of $\wt{\Om}\cM_g(\kap)$. In the case where $v \in \R^n_0$ and $V = \C v$, the action of $\GL^+(2,\R)$ sends leaves of $\cA_V(\kap)$ to leaves of $\cA_V(\kap)$.

Let $L$ be the leaf of the absolute period foliation of $\wt{\Om}\cM_g(\kap)$ through $(X_0,\om_0)$. Given $q \in X$ and paths $\gam_j$ from $q$ to $Z_j$, the {\em relative period map}
\be
\rho(X,\om) = \left(\int_{\gam_1} \om, \dots, \int_{\gam_n} \om\right) \in \C^n_0
\ee
provides local coordinates on a neighborhood of $(X_0,\om_0)$ in $L$. The map $\rho$ does not depend on the choice of $q$, but different choices of paths may translate the components of $\rho$ by absolute periods, which are constant on $L$. These local coordinates give $L$ a translation structure modelled on $\C^n_0$, and in particular a straight-line flow associated to each nonzero $v \in \C^n_0$. This straight-line flow gives rise to a partially defined flow on $\wt{\Om}\cM_g(\kap)$ called a {\em Rel flow}. When $v \in \R^n_0$, this partially defined flow is called a {\em real Rel flow}. Orbits of Rel flows are not always well-defined for all time, since distinct zeros may collide in finite time. Rel flows will be denoted by $\Rel_{t v} (X,\om)$ with $t \in \R$ when well-defined. The map
\be
\wt{\Om}\cM_g(\kap) \times \R \ra \wt{\Om}\cM_g(\kap), \quad ((X,\om),t) \mapsto {\rm \Rel}_{tv} (X,\om)
\ee
is well-defined and continuous on an open subset of $\wt{\Om}\cM_g(\kap) \times \R$. We denote by
\be
I(v,\om) \subset \R
\ee
the maximal domain of definition of the map $t \mapsto \Rel_{tv}(X,\om)$. For real Rel flows, the only obstructions to having $I(v,\om) = \R$ come from horizontal saddle connections with distinct endpoints.

\begin{thm} \label{thm:domain} (\cite{BSW:horocycle}, Corollary 6.2) Fix a nonzero $v \in \R^n_0$. We have $t_0 \in I(v,\om)$ if and only if for all $1 \leq i \neq j \leq n$, $(X,\om)$ does not have a saddle connection $\gam$ from $Z_i$ to $Z_j$ such that $\int_\gam \om = t t_0 (v_i - v_j)$ for some $t \in [0,1]$.
\end{thm}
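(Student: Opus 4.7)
The plan is to exploit that in local period coordinates the Rel flow acts by translation of the relative periods, so that for any saddle connection $\gam$ from $Z_i$ to $Z_j$ persisting on $(X_t,\om_t) := \Rel_{tv}(X,\om)$, one has $\int_\gam \om_t = \int_\gam \om + t(v_j - v_i)$, while absolute periods remain constant. Reality of $v$ then forces $\im \int_\gam \om_t$ to be independent of $t$, so a saddle connection can shrink to zero length only if it is already horizontal. I will also use that $I(v,\om)$ is an open interval about $0$: the Rel flow extends locally in period coordinates whenever the zeros remain distinct. For concreteness assume $t_0 > 0$, the case $t_0 < 0$ being symmetric under $v \mapsto -v$.

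For the forward direction, suppose $t_0 \in I(v,\om)$ and, for contradiction, that there is a saddle connection $\gam$ on $(X,\om)$ from $Z_i$ to $Z_j$ with $\int_\gam \om = s(v_i - v_j)$ for some $s \in (0, t_0]$; here $s = 0$ is excluded by $Z_i \neq Z_j$, and $s = t_0$ by openness of $I(v,\om)$. Choose $s$ minimal with this property. The segment $[0,s]$ lies in $I(v,\om)$, so the class of $\gam$ deforms to a saddle connection $\gam_t$ on $(X_t,\om_t)$ with period $(s-t)(v_i - v_j)$, nonzero for $t \in [0,s)$. As $t \to s^-$, the length of $\gam_t$ tends to zero, so $Z_i$ and $Z_j$ coincide on $(X_s,\om_s)$, contradicting $(X_s,\om_s) \in \wt{\Om}\cM_g(\kap)$.

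For the converse, suppose no such saddle connection exists and, for contradiction, that $t_0 \notin I(v,\om)$. Let $s \in (0, t_0]$ be the right endpoint of the maximal subinterval of $I(v,\om)$ starting at $0$; then $(X_t,\om_t)$ is defined on $[0, s)$ but leaves every compact subset of $\wt{\Om}\cM_g(\kap)$ as $t \to s^-$. By the compactness criterion for strata (a sequence leaves compacta if and only if the shortest saddle connection shrinks to zero), there is a sequence $t_n \to s$ and saddle connections $\gam_n$ on $(X_{t_n}, \om_{t_n})$ with $\int_{\gam_n} \om_{t_n} \to 0$. Constancy of absolute periods along the flow, combined with the local finiteness of saddle connections of bounded length, allows one to pass to a subsequence so that the $\gam_n$, transported along the continuous family back to $(X,\om)$, realize a single relative homology class represented by a saddle connection $\gam$ on $(X,\om)$ from some $Z_i$ to $Z_j$ with $i \neq j$. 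The imaginary-part identity forces $\int_\gam \om \in \R$, and passing to the limit in $\int_\gam \om + t_n(v_j - v_i) \to 0$ yields $\int_\gam \om = s(v_i - v_j) = t t_0 (v_i - v_j)$ with $t = s/t_0 \in (0, 1]$, contradicting the hypothesis.

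I expect the main obstacle to be the compactness step: justifying that a breakdown of the Rel flow must come from the collapse of a single saddle connection whose class is already visible on $(X, \om)$. This is a standard consequence of compactness in strata together with the linear evolution of periods under the Rel flow, and the imaginary-part identity then pins down the horizontal direction.
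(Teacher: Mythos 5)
The paper does not prove this statement; it simply cites \cite{BSW:horocycle}, Corollary 6.2, so there is no proof in the paper against which to compare your approach directly. Evaluating your sketch on its own terms: the overall strategy (linear evolution of periods, the imaginary-part observation isolating horizontal directions, and the Mumford-type compactness criterion combined with the Baire-free ``blowup'' fact that a maximal flow orbit leaves compacta) is sound, and the imaginary-part identity is exactly the right lever. However, both directions contain a genuine gap at the same place, which is also the crux of the BSW theorem: the persistence of the relevant saddle connection across the interval.

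In the forward direction you assert that ``the class of $\gam$ deforms to a saddle connection $\gam_t$ on $(X_t,\om_t)$'' for all $t\in[0,s)$; the minimality of $s$, as stated, does not by itself rule out a zero $Z_k$ entering the interior of $\gam$ and breaking it at some $\tau<s$. The argument does go through, but only after observing that because $v\in\R^n_0$ the \emph{vertical} (imaginary) coordinate of every zero relative to every other zero is invariant under the flow, so the only way a zero can reach the interior of the horizontal saddle connection $\gam$ is by first colliding with an endpoint $Z_i$ or $Z_j$, which is excluded on $[0,s]\subset I(v,\om)$. This should be said explicitly; without it the step is unjustified. In the converse direction the gap is more serious: from shortness of $\gam_n$ on $(X_{t_n},\om_{t_n})$ you deduce only that the transported relative class has bounded (and imaginary part small) period at time $0$. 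That does not imply the class is represented by a \emph{single} saddle connection on $(X,\om)$; a priori its geodesic representative on $(X,\om)$ could be a long concatenation of saddle connections with partially cancelling holonomies. ``Local finiteness of saddle connections of bounded length'' applies to a fixed surface, but $\gam_n$ are short on varying surfaces $(X_{t_n},\om_{t_n})$, not on $(X,\om)$, so it does not deliver the needed rigidity. The missing ingredient is the same persistence statement run backwards in time, again hinging on $v$ being real. You correctly flag the compactness step as ``the main obstacle,'' and that is precisely where the sketch currently relies on an unproved assertion rather than an argument.

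Two smaller remarks: (i) the phrase ``$s=t_0$ is excluded by openness of $I(v,\om)$'' should be rephrased — the point is that for $t_0\in I(v,\om)$ one has $[0,t_0]\subset I(v,\om)$ (an open interval containing $0$ and $t_0$), so the contradiction at time $s\le t_0$ already goes through without separating the endpoint case; and (ii) when invoking the compactness criterion you should note that the Rel flow preserves area, so the orbit stays in a fixed-area slice where Mumford compactness gives exactly the stated equivalence.
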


See also \cite{MW:cohomology} and \cite{McM:navigating}. In particular, if $(X,\om)$ does not have a horizontal saddle connection with distinct endpoints, then $\Rel_{\R v}(X,\om)$ is well-defined.

The horocycle flow commutes with real Rel flows, and the geodesic flow normalizes real Rel flows. For $v \in \R^n_0$ nonzero, $(X,\om) \in \wt{\Om}\cM_g(\kap)$, and $s,t \in \R$, we have
\begin{align*}
u_s {\rm \Rel}_v (X,\om) &= {\rm \Rel}_v u_s(X,\om) \\
g_t {\rm \Rel}_v (X,\om) &= {\rm \Rel}_{e^t v} g_t(X,\om)
\end{align*}
where in each case the left-hand side is well-defined if and only if the right-hand side is. \\

\paragraph{\bf The slit construction.} We describe a local surgery for changing the zero orders of a holomorphic $1$-form, or a collection of holomorphic $1$-forms on disjoint surfaces, without changing any absolute periods. See Sections 8-9 in \cite{EMZ:principal}, Section 5 in \cite{CDF:transfer}, and Section 3 in \cite{McM:navigating} for related discussions.

Let $(X_j,\om_j)$, $j = 1,\dots,n$, be a collection of holomorphic $1$-forms. Choose a nonzero $z \in \C$, choose points $p_j \in X_j$, and choose $1 \leq r_j \leq k_j + 1$, where $k_j \geq 0$ is the order of $\om_j$ at $p_j$. Suppose there are oriented geodesic segments $s_{j,1},\dots,s_{j,r_j}$ in $(X_j,\om_j)$ with $\int_{s_{j,k}} \om_j = z$, starting at $p_j$ and disjoint from $Z(\om_j)$ except possibly at $p_j$, and such that the counterclockwise angle around $p_j$ from $s_{j,1}$ to $s_{j,k}$ is $2\pi(k-1)$. We view the segments
\be
s_{1,1},s_{1,2},\dots,s_{1,r_1},s_{2,1},s_{2,2},\dots,s_{2,r_2},\dots,s_{n,1},s_{n,2},\dots,s_{n,r_n}
\ee
as being cyclically ordered. For notational simplicity, we rename the segments
\be
s_1,\dots,s_N,
\ee
respectively. We refer to the following surgery as {\em applying the slit construction} to the segments $s_1,\dots,s_N$. For $j = 1,\dots,N$, slit $s_j$ to obtain a pair of oriented segments $s_j^+,s_j^-$, corresponding to the left and right sides of $s_j$, respectively. Glue $s_j^+$ to $s_{j+1}^-$ isometrically and respecting the orientations, where indices are taken modulo $N$. The result is a connected topological surface, and the complex structure and the given holomorphic $1$-forms extend over the slits to give a holomorphic $1$-form $(X,\om)$.

The starting points of the segments yield a zero of $\om$ of order $-1 + \sum_{j=1}^n (k_j - r_j + 2)$, and the ending points of the segments yield a zero of order $-1 + \sum_{j=1}^n r_j$. In the special case where $n = 1$ and $1 < r_1 < k_1 + 1$, the zero $p_1$ of order $k_1$ is split into two zeros of orders $k_1 - r_1 + 1$ and $r_1 - 1$. In the special case where $k_j = 0$ for all $j$, the starting points of the segments yield a zero of $\om$ of order $n-1$, and the ending points of the segments yield another zero of $\om$ of order $n-1$.


\section{Dense real Rel flow orbits} \label{sec:densecriterion}

In this section, we give a criterion for the existence of dense real Rel flow orbits in $\SL(2,\R)$-orbit closures in strata of holomorphic $1$-forms with labelled zeros. We rely on an equidistribution result in \cite{For:pushforward} for geodesic pushforwards of horocycle arcs, which builds off of the equidistribution results in \cite{EM:stationary} and \cite{EMM:closures} for the action of the upper-triangular subgroup
\be
P = \left\{u_s g_t \; : \; s,t \in \R \right\} \subset \SL(2,\R) .
\ee
Throughout this section, $g \geq 2$ is fixed and $\kap = (k_1,\dots,k_n)$ is a fixed partition of $2g-2$.

\begin{thm} (Corollary 1.3, \cite{For:pushforward}) \label{thm:Fequi}
Let $\nu$ be the length measure on a horocycle arc $u_{[0,T]}(X,\om)$, normalized to have total mass $1$, and let $\mu$ be the $\SL(2,\R)$-invariant affine probability measure supported on the $\SL(2,\R)$-orbit closure of $(X,\om)$. There exists a subset $Z \subset \R_{>0}$ of zero upper density such that
\be
\lim_{\substack{t \ra +\infty \\ t \notin Z}} (g_t)_\ast \nu = \mu
\ee
in the weak-* topology.
\end{thm}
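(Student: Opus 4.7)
The plan is to deduce this equidistribution from the Eskin--Mirzakhani classification of $P$-invariant measures on strata together with quantitative non-divergence estimates for horocycles. First I would analyze weak-$\ast$ subsequential limits $\mu_\infty$ of $(g_{t_n})_\ast \nu$ as $t_n \to \infty$. The conjugation identity $g_t u_s g_{-t} = u_{e^{2t}s}$ shows that $(g_t)_\ast \nu$ is the normalized arc length measure on the much longer horocycle arc $u_{[0,e^{2t}T]} g_t(X,\om)$. Thus for any fixed $s \in \R$ and continuous compactly supported test function $f$, the difference between $\int f \circ u_s \, d((g_{t_n})_\ast \nu)$ and $\int f \, d((g_{t_n})_\ast \nu)$ is a boundary term of size $O(|s|/(e^{2 t_n}T))$, which tends to $0$; hence any weak-$\ast$ limit $\mu_\infty$ is $u_s$-invariant. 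Promoting $u_s$-invariance to $P$-invariance would use the $g_t$-equivariance of the family itself at the level of pushforwards, via an additional Ces\`aro average in $t$.

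The next ingredient is non-divergence: by the quantitative non-divergence results for horocycles in strata in the style of Athreya and Minsky--Weiss (also used in \cite{MW:horocycle}), the mass of $(g_t)_\ast \nu$ outside large compact subsets of $\wt{\Om}\cM_g(\kap)$ is uniformly small, except on a set of times $t$ of controlled density. This ensures that subsequential limits are probability measures, and outside a small set of times no mass escapes. Combined with the Eskin--Mirzakhani classification of $P$-invariant probability measures, any such $\mu_\infty$ is an $\SL(2,\R)$-invariant affine probability measure supported on some affine invariant submanifold $\cN \subset \cM$. If $\cN = \cM$, then by uniqueness of the canonical affine probability measure on $\cM$ we have $\mu_\infty = \mu$.

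The main obstacle, and the heart of the proof in \cite{For:pushforward}, is ruling out convergence to affine measures supported on proper affine invariant submanifolds $\cN \subsetneq \cM$ along a set of times of positive upper density. If the pushforwards $(g_{t_n})_\ast \nu$ clustered near an affine measure $\mu'$ supported on such an $\cN$ for a positive upper density set of times, then the long horocycle arcs $u_{[0,e^{2t_n}T]} g_{t_n}(X,\om)$ would spend a large fraction of their length in a small neighborhood of $\cN$. Using the transverse affine structure of $\cM$ along $\cN$ together with a quantitative form of the Eskin--Mirzakhani equidistribution for $P$-orbits, one argues that this would force $(X,\om)$ itself to lie in $\cN$, contradicting the assumption that its $\SL(2,\R)$-orbit closure is the strictly larger $\cM$. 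Turning this transversality argument into a quantitative statement that yields an excluded set of zero upper density, rather than merely density zero along a chosen subsequence, is the main technical challenge; it is here that the effective content of Eskin--Mirzakhani, rather than its bare qualitative statement, is used.
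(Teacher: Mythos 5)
This statement is not proved in the paper: it is cited verbatim as Corollary 1.3 of \cite{For:pushforward} and used as an external input in the proof of Theorem \ref{thm:densecriterion}. There is therefore no internal argument here to compare your sketch against.

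Taken as a reconstruction of the argument in \cite{For:pushforward}, your sketch has a genuine gap at the step where you promote $u_s$-invariance of a weak-$\ast$ accumulation point $\mu_\infty$ of $(g_{t_n})_\ast\nu$ to $P$-invariance via ``an additional Ces\`aro average in $t$.'' Averaging the pushforwards in $t$ produces the family $T^{-1}\int_0^T (g_t)_\ast\nu\,dt$, and the fact that this averaged family converges to $\mu$ is precisely the equidistribution theorem of Eskin--Mirzakhani--Mohammadi, already used separately in this paper (Theorem 2.1 of \cite{EMM:closures}). That is a statement about a different sequence of measures; it does not make an individual accumulation point of the \emph{unaveraged} family $g_t$-invariant. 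Note that Ces\`aro convergence of $t\mapsto\int f\,d(g_t)_\ast\nu$ does not imply convergence along a set of times of full upper density, since a Ces\`aro-convergent function can oscillate indefinitely. An unaveraged accumulation point $\mu_\infty$ is, a priori, only $u_s$-invariant, and ergodic $u_s$-invariant probability measures on strata are not classified, so the Eskin--Mirzakhani measure rigidity theorem cannot be applied to $\mu_\infty$ as your sketch does. The passage from the averaged statement to convergence of the individual pushforwards $(g_t)_\ast\nu$ outside a zero-upper-density set of times is exactly the content that Corollary 1.3 of \cite{For:pushforward} adds to Eskin--Mirzakhani--Mohammadi, and your sketch does not supply a mechanism for it. Your final paragraph locates the main difficulty downstream, in quantitatively ruling out proper affine invariant submanifolds, but the missing idea is already upstream: without a reason for full-density-time accumulation points to be $P$-invariant, none of the measure classification machinery engages.
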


This result is useful in the study of real Rel flows, since real Rel flows commute with the horocycle flow and both flows are normalized by the geodesic flow. Our density criterion for real Rel flow orbits is the following.

\begin{thm} \label{thm:densecriterion}
Suppose $n > 1$, and fix a nonzero $v \in \R^n_0$. Fix $(X,\om) \in \wt{\Om}\cM_g(\kap)$, and let $\cM \subset \wt{\Om}\cM_g(\kap)$ be the $\SL(2,\R)$-orbit closure of $(X,\om)$. If $\Rel_{\R v} (X,\om)$ is contained in $\cM$ and the closure of $\Rel_{\R v} (X,\om)$ contains the horocycle through $(X,\om)$, then there exists $(Y,\eta) \in \cM$ such that $\Rel_{\R v}(Y,\eta)$ is dense in $\cM$.
\end{thm}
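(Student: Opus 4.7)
My plan is to apply the Baire category theorem inside $\cM$ and produce a dense $G_\delta$ set of $(Y,\eta) \in \cM$ whose real Rel orbit is both defined for all $t \in \R$ and dense in $\cM$. The key input beyond the hypothesis is Theorem \ref{thm:Fequi} from \cite{For:pushforward}, together with the commutation relations of $\Rel_{\R v}$ with the subgroup $P = \{u_s g_t : s,t \in \R\}$.

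First I would propagate the hypothesis along $P$. Let $E \subset \cM$ denote the set of $p$ such that $\Rel_{\R v}(p)$ is defined for all $t \in \R$, is contained in $\cM$, and satisfies $\overline{\Rel_{\R v}(p)} \supset u_\R(p)$. Using $u_s \Rel_{tv} = \Rel_{tv} u_s$, the set $E$ is $u_s$-invariant; using $g_t \Rel_{sv} = \Rel_{s e^t v} g_t$ together with $g_t u_s g_{-t} = u_{s e^{2t}}$, it is $g_t$-invariant. Hence $E \supset P \cdot (X,\om)$. In particular the set $D = \{p \in \cM : I(v,p) = \R\}$, which equals $\bigcap_N \{p : [-N,N] \subset I(v,p)\}$ and is a $G_\delta$ by Theorem \ref{thm:domain}, contains $P \cdot (X,\om)$; since the $P$-orbit of $(X,\om)$ is dense in $\cM$ (by \cite{EMM:closures}), $D$ is dense in $\cM$.

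Next, for each nonempty open $U \subset \cM$, define
\be
A_U = \{ p \in \cM : \overline{\Rel_{\R v}(p)} \cap U \neq \emptyset \}.
\ee
This set is open, as joint continuity of $\Rel$ on its open domain of definition propagates a single visit of the orbit to $U$ into a visit from every nearby point. To prove $A_U$ is dense, let $V \subset \cM$ be any nonempty open set, pick $T > 0$, and let $\nu$ be the normalized length measure on $u_{[0,T]}(X,\om)$. Let $\mu$ denote the $\SL(2,\R)$-invariant affine probability measure on $\cM$; by \cite{EMM:closures} its support is all of $\cM$, so $\mu(V), \mu(U) > 0$. Theorem \ref{thm:Fequi} then produces some $t$ outside a set of zero upper density with $(g_t)_\ast\nu(V) > 0$ and $(g_t)_\ast\nu(U) > 0$ simultaneously, yielding $s_1, s_2 \in [0,T]$ with $p_1 := g_t u_{s_1}(X,\om) \in V$ and $p_2 := g_t u_{s_2}(X,\om) \in U$. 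From $u_{s'} g_t = g_t u_{s' e^{-2t}}$ one computes $p_2 = u_{(s_2 - s_1) e^{2t}}(p_1) \in u_\R(p_1)$. Since $p_1 \in P \cdot (X,\om) \subset E$, we have $u_\R(p_1) \subset \overline{\Rel_{\R v}(p_1)}$, so $p_2 \in \overline{\Rel_{\R v}(p_1)} \cap U$, giving $p_1 \in V \cap A_U$.

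Finally, $\cM$ is locally compact Hausdorff hence Baire, and for a countable base $\{U_k\}$ of its topology, the intersection $D \cap \bigcap_k A_{U_k}$ is a dense $G_\delta$, in particular nonempty; any $(Y,\eta)$ in it has $\Rel_{\R v}(Y,\eta)$ defined for all $t \in \R$ with closure meeting every basic open set, hence dense in $\cM$. The main obstacle is the density of each $A_U$: the horocycle accumulation property is assumed at the single base point $(X,\om)$, and must be leveraged to produce points of horocycle accumulation throughout $\cM$. This is precisely what Theorem \ref{thm:Fequi} delivers, via the key observation that any two points of a pushed-forward horocycle arc $g_t u_{[0,T]}(X,\om)$ lie on a single horocycle, so horocycle accumulation inherited by one point forces the other point into the Rel-orbit closure of the first.
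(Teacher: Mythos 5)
Your proof follows essentially the same strategy as the paper: propagate the hypothesis along the $P$-orbit using the commutation relations, apply Theorem \ref{thm:Fequi} to find pushed-forward horocycle arcs hitting prescribed open sets, and conclude by Baire category. Your density argument for $A_U$ is phrased a bit differently but is substantively the same: you locate $p_1 \in V$ and $p_2 \in U$ on one pushed arc $g_tu_{[0,T]}(X,\om)$ and use $p_2 \in u_\R(p_1) \subset \overline{\Rel_{\R v}(p_1)}$, whereas the paper picks a single point of the arc in $\cU$, notes its Rel-orbit closure contains the whole arc, and observes that the union of the arcs is dense.

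There is, however, a genuine gap at the conclusion. You establish that $\overline{\Rel_{\R v}(Y,\eta)}$ meets every basic open subset of $\cM$, but you never verify that $\Rel_{\R v}(Y,\eta) \subset \cM$, so your argument only shows the orbit accumulates on all of $\cM$, not that it is a dense subset of $\cM$. The paper addresses this early on: for any $(Y,\eta) \in \cM$ and $t \in I(v,\eta)$, one approximates $(Y,\eta)$ by $u_{s_m}g_{t_m}(X,\om)$, uses joint continuity of Rel to write $\Rel_{tv}(Y,\eta)$ as a limit of $u_{s_m}g_{t_m}\Rel_{te^{-t_m}v}(X,\om)$, and then invokes the hypothesis $\Rel_{\R v}(X,\om) \subset \cM$ together with $\SL(2,\R)$-invariance and closedness of $\cM$ to conclude $\Rel_{tv}(Y,\eta) \in \cM$. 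This step is what makes $\cM$ Rel-invariant and keeps the whole construction inside $\cM$; your propagation only gives Rel-invariance over $P\cdot(X,\om)$, not at the eventual $(Y,\eta)$, which is merely in the closure. The omission matters precisely in the situations the theorem is designed to cover — a proper $\SL(2,\R)$-orbit closure $\cM$ such as the eigenform locus $\cE_D$ in Theorem \ref{thm:eigenform}, where Rel-invariance of $\cM$ is not a priori obvious. A smaller point: as written, $A_U$ presupposes $\overline{\Rel_{\R v}(p)}$ is defined (i.e.\ $I(v,p) = \R$), so it is not open; you should define $A_U = \{p \in \cM : \Rel_{tv}(p) \in U$ for some $t \in I(v,p)\}$, which is open and for which your density argument works verbatim.
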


\begin{proof}
Fix $T > 0$, and let $\nu$ be the length measure on the horocycle arc $u_{[0,T]}(X,\om)$, normalized to have total mass $1$. Let $\mu$ be the $\SL(2,\R)$-invariant affine probability measure supported on $\cM$. By Theorem \ref{thm:Fequi}, there is a sequence $t_n \ra +\infty$ such that $(g_{t_n})_\ast \nu \ra \mu$ in the weak-* topology. The relation
\be
g_t u_s = u_{se^{2t}} g_t
\ee
implies that the support of $(g_{t_n})_\ast \nu$ is given by
\be
\supp((g_{t_n})_\ast \nu) = g_{t_n}u_{[0,T]}(X,\om) = u_{[0,Te^{2t_n}]}g_{t_n}(X,\om) .
\ee

By Theorem 2.1 in \cite{EMM:closures}, the $P$-orbit of $(X,\om)$ is dense in $\cM$. For $(Y,\eta) \in \cM$, there are sequences $s_m,t_m \in \R$ such that $u_{s_m}g_{t_m}(X,\om) \ra (Y,\eta)$. Since the map $((X^\pr,\om^\pr),t) \mapsto \Rel_{t v}(X^\pr,\om^\pr)$ is well-defined and continuous on an open subset of $\wt{\Om}\cM_g(\kap) \times \R$, for $t \in I(v,\eta)$ we have that
\be
u_{s_m}g_{t_m}{\rm Rel}_{te^{-t_m}v}(X,\om) = {\rm Rel}_{tv}u_{s_m}g_{t_m}(X,\om) \ra {\rm Rel}_{tv}(Y,\eta) .
\ee
Then since $\Rel_{\R v}(X,\om) \subset \cM$, and since $\cM$ is $\SL(2,\R)$-invariant and closed, we have $\Rel_{t v}(Y,\eta) \in \cM$.

For each nonempty open subset $\cU \subset \cM$, define
\be
\cS(v,\cU) = \left\{(Y,\eta) \in \cM \; : \; {\rm Rel}_{t v}(Y,\eta) \in \cU \text{ for some } t \in I(v,\eta)\right\} .
\ee
For each $t \in \R$, the map $(Y,\eta) \mapsto \Rel_{t v} (Y,\eta)$ is well-defined and continuous on an open subset of $\cM$, so $\cS(v,\cU)$ is open. Since the support of $\mu$ is $\cM$, we have $\mu(\cU) > 0$, and since $(g_{t_n})_\ast \nu \ra \mu$, for sufficiently large $n$ the support of $(g_{t_n})_\ast\nu$ intersects $\cU$, which means there exists $s_n \in [0,Te^{2t_n}]$ such that $u_{s_n} g_{t_n} (X,\om) \in \cU$. Clearly $\cU \subset \cS(v,\cU)$, and so
\be
u_{s_n} g_{t_n} (X,\om) \in \cS(v,\cU) .
\ee
By hypothesis, the closure of $\Rel_{\R v}(X,\om)$ contains the horocycle through $(X,\om)$. For $s \in \R$, there is a sequence $t_m^\pr \in \R$ such that ${\rm \Rel}_{t_m^\pr v}(X,\om) \ra u_s(X,\om)$. We then have
\be
u_{s_n}g_{t_n}{\rm \Rel}_{t_m^\pr v}(X,\om) \ra u_{s_n}g_{t_n}u_s(X,\om)
\ee
or equivalently,
\be
{\rm \Rel}_{t_m^\pr e^{t_n} v}u_{s_n}g_{t_n}(X,\om) \ra u_{se^{2t_n}}u_{s_n}g_{t_n}(X,\om) .
\ee
Since $s$ is arbitrary, the closure of $\Rel_{\R v}u_{s_n}g_{t_n}(X,\om)$ contains the horocycle through $u_{s_n}g_{t_n}(X,\om)$, and in particular contains the support of $(g_{t_n})_\ast \nu$. Since $u_{s_n}g_{t_n}(X,\om) \in \cS(v,\cU)$, by definition
\be
{\rm \Rel}_{\R v}u_{s_n}g_{t_n}(X,\om) \subset \cS(v,\cU)
\ee
so the closure of $\Rel_{\R v}u_{s_n}g_{t_n}(X,\om)$ is contained in the closure of $\cS(v,\cU)$. Then there is $n_0$ such that $\bigcup_{n \geq n_0} \supp((g_{t_n})_\ast \nu)$ is contained in the closure of $\cS(v,\cU)$. Since $(g_{t_n})_\ast \nu \ra \mu$, the union $\bigcup_{n \geq n_0} \supp((g_{t_n})_\ast\nu)$ is dense in $\cM$, thus $\cS(v,\cU)$ is dense in $\cM$.

Now choose a countable basis $\{\cU_m\}_{m=1}^\infty$ for the topology on $\cM$. For each $m$, we have that $\cS(v,\cU_m)$ is open and dense in $\cM$. It follows from Theorem \ref{thm:domain} that for any $(a,b) \subset \R$, the set of $(Y,\eta) \in \cM$ such that $(a,b) \subset I(v,\eta)$ is a dense open subset of $\cM$. Then the set of
\be
(Y,\eta) \in \bigcap_{m = 1}^\infty \cS(v,\cU_m)
\ee
such that $I(v,\eta) = \R$ is a countable intersection of dense open subsets of $\cM$, so by the Baire category theorem it is dense and in particular nonempty. For any such $(Y,\eta) \in \cM$, we have that $\Rel_{\R v}(Y,\eta)$ intersects $\cU_m$ for all $m$, and is therefore dense in $\cM$.
\end{proof}

In Section \ref{sec:construction}, we will construct explicit holomorphic $1$-forms satisfying the hypotheses of Theorem \ref{thm:densecriterion}. However, we emphasize that the dense real Rel flow orbits provided by Theorem \ref{thm:densecriterion} are not explicit. For instance, a holomorphic $1$-form $(X,\om)$ satisfying the hypotheses of Theorem \ref{thm:densecriterion} may have a periodic horizontal foliation, but in that case, the closure of $\Rel_{\R v}(X,\om)$ does not contain the $\SL(2,\R)$-orbit of $(X,\om)$.


\section{Twist parameters for the horocycle flow and real Rel flows} \label{sec:twist}

In this section, we study the horocycle flow and real Rel flows in the special case of holomorphic $1$-forms with a periodic horizontal foliation. See Section 6 in \cite{HW:Rel} for a more detailed discussion in the case of holomorphic $1$-forms with exactly $2$ zeros. Using the density criterion for real Rel flow orbits in Section \ref{sec:densecriterion}, and the density criterion for $\GL^+(2,\R)$-orbits in Section \ref{sec:background}, we reduce Theorem \ref{thm:denserealRel} to a problem of constructing certain holomorphic $1$-forms with a periodic horizontal foliation. Throughout this section, $g \geq 2$ is fixed and $\kap = (k_1,\dots,k_n)$ is a fixed partition of $2g-2$ with $n > 1$. \\

\paragraph{\bf Horizontal twist maps.} Suppose $(X,\om) \in \wt{\Om}\cM_g(\kap)$ has a periodic horizontal foliation, and let $C_1,\dots,C_m$ be the horizontal cylinders on $(X,\om)$. Let $h_j$ and $w_j$ be the height and circumference of $C_j$, respectively. Choose a saddle connection $\gam_j \subset C_j \cup Z(\om)$ crossing $C_j$ from bottom to top, and let $t_j \in \R / w_j \Z$ be the twist parameter of $C_j$ with respect to $\gam_j$. Let $\al_j \subset C_j$ be a closed geodesic with $\int_{\al_j} \om \in \R_{>0}$. Cutting $C_j$ along $\al_j$, twisting to the left by $a_j \in \R$, and regluing, changes the twist parameter of $C_j$ from $t_j$ to $t_j + a_j$. The lengths of the horizontal saddle connections and the heights and circumferences of the horizontal cylinders on $(X,\om)$ are unchanged in the process. In this way, we obtain a continuous map $\prod_{j=1}^m \R / w_j \Z \ra \wt{\Om}\cM_g(\kap)$. For convenience, we rescale the inputs to obtain a {\em horizontal twist map}
\be
\rho_\om : \R^m / \Z^m \ra \wt{\Om}\cM_g(\kap)
\ee
such that
\be
\rho_\om\left(\frac{t_1}{w_1},\dots,\frac{t_m}{w_m}\right) = (X,\om) .
\ee

\paragraph{\bf Horocycle and real Rel flows.} The horocycle flow preserves the lengths of horizontal saddle connections and the heights and circumferences of horizontal cylinders. The horocycle through $(X,\om)$ is obtained by twisting each horizontal cylinder at a rate proportional to its height. For $s \in \R$, we have
\be
u_s(X,\om) = \rho_\om\left(\frac{t_1 + sh_1}{w_1},\dots,\frac{t_m + sh_m}{w_m}\right) .
\ee
We will only need to know that $u_s(X,\om)$ is in the image of $\rho_\om$.

Next, fix a nonzero $v \in \R^n_0$. The real Rel flow associated to $v$ also preserves the heights and circumferences of horizontal cylinders. However, if $v_i - v_j \neq 0$, then the length of a horizontal saddle connection from $Z_i$ to $Z_j$ is not preserved. Suppose additionally that $v_i - v_j = 0$ whenever $Z_i$ and $Z_j$ are the endpoints of a horizontal saddle connection on $(X,\om)$. Then by Theorem \ref{thm:domain}, $\Rel_{\R v}(X,\om)$ is well-defined. Moreover, $\Rel_{\R v}(X,\om)$ is contained in the image of $\rho_\om$. Let $T_j$ and $B_j$ be the top and bottom boundaries of $C_j$, respectively. Choose functions
\be
t,b : \{1,\dots,m\} \ra \{1,\dots,n\}
\ee
such that $t(j)$ and $b(j)$ are the indices of a zero in $T_j$ and $B_j$, respectively. By our assumption, the differences $v_{t(j)} - v_{b(j)}$ do not depend on the choice of $t$ and $b$. The real Rel flow orbit through $(X,\om)$ is obtained by twisting $C_j$ at a rate proportional to the difference $v_{t(j)} - v_{b(j)}$. For $s \in \R$, we have
\be
{\rm \Rel}_{sv}(X,\om) = \rho_\om\left(\frac{t_1 + s(v_{t(1)} - v_{b(1)})}{w_1},\dots,\frac{t_m + s(v_{t(m)} - v_{b(m)})}{w_m}\right) .
\ee
Let $v_{\rm tw} = \left(\frac{v_{t(1)} - v_{b(1)}}{w_1},\dots,\frac{v_{t(m)} - v_{b(m)}}{w_m}\right)$, and consider the linear flow $\varphi_s(x) = x + sv_{\rm tw}$ on $\R^m/\Z^m$. Recall that the closure of the flow orbit through $x$ is the smallest subtorus of $\R^m/\Z^m$ containing $x$. In other words, the closure $\ol{\varphi_\R (x)}$ is given by $x + T_0$, where $T_0$ is the projection of the subspace of vectors $(s_1,\dots,s_m) \in \R^m$ whose components satisfy all homogeneous $\Q$-linear relations satisfied by $\frac{v_{t(1)} - v_{b(1)}}{w_1},\dots,\frac{v_{t(m)} - v_{b(m)}}{w_m}$. By applying $\rho_\om$ to a flow orbit, we obtain the following lemma.

\begin{lem} \label{lem:realReltwist}
Let $V \subset \R^m$ be the subspace of vectors $(s_1,\dots,s_m)$ whose components satisfy all homogeneous $\Q$-linear relations satisfied by $\frac{v_{t(1)} - v_{b(1)}}{w_1},\dots,\frac{v_{t(m)} - v_{b(m)}}{w_m}$. Let $T_0 \subset \R^m / \Z^m$ be the torus given by the projection of $V$, and let $T = \left(\frac{t_1}{w_1},\dots,\frac{t_m}{w_m}\right) + T_0$. Then the closure of $\Rel_{\R v}(X,\om)$ in $\wt{\Om}\cM_g(\kap)$ is $\rho_\om(T)$.
\end{lem}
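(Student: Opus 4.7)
The plan is to derive the lemma directly from the explicit formula
\[
{\rm \Rel}_{sv}(X,\om) = \rho_\om\left(\tfrac{t_1 + s(v_{t(1)} - v_{b(1)})}{w_1},\dots,\tfrac{t_m + s(v_{t(m)} - v_{b(m)})}{w_m}\right)
\]
displayed just before the statement, together with the classical description of closures of linear flow orbits on a torus. In symbols, if $x_0 = (t_1/w_1,\dots,t_m/w_m)$ and $\varphi_s(x) = x + s v_{\rm tw}$, then the displayed identity says ${\rm \Rel}_{sv}(X,\om) = \rho_\om(\varphi_s(x_0))$ for every $s \in \R$.

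First I would invoke the Kronecker--Weyl theorem, which is the fact already recorded in the paragraph preceding the statement: the closure of the $\varphi_\R$-orbit of $x_0$ in $\R^m/\Z^m$ equals the affine subtorus $x_0 + T_0 = T$, where $T_0$ is the projection of the linear subspace $V$ consisting of vectors satisfying all homogeneous $\Q$-linear relations enjoyed by the coordinates of $v_{\rm tw}$. This is the only nontrivial input and is standard.

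Next I would apply the continuous horizontal twist map $\rho_\om : \R^m/\Z^m \to \wt{\Om}\cM_g(\kap)$ to the previous identity. Since $\R^m/\Z^m$ is compact and $\rho_\om$ is continuous, $\rho_\om(T)$ is compact, hence closed in $\wt{\Om}\cM_g(\kap)$. The inclusion $\overline{{\rm \Rel}_{\R v}(X,\om)} \subset \rho_\om(T)$ then follows from the formula, because ${\rm \Rel}_{\R v}(X,\om) = \rho_\om(\varphi_\R(x_0)) \subset \rho_\om(T)$. For the reverse inclusion, given any $y \in T$, Kronecker--Weyl supplies a sequence $s_n \in \R$ with $\varphi_{s_n}(x_0) \to y$; continuity of $\rho_\om$ then gives ${\rm \Rel}_{s_n v}(X,\om) = \rho_\om(\varphi_{s_n}(x_0)) \to \rho_\om(y)$, so $\rho_\om(y) \in \overline{{\rm \Rel}_{\R v}(X,\om)}$, and $y$ was arbitrary.

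I do not expect a genuine obstacle here: the lemma is essentially a transcription of the elementary dynamics of a straight-line flow on $\R^m/\Z^m$ along the twist map. The only point worth being careful about is that $\rho_\om$ need not be injective, so the argument must go through closure properties of the continuous image rather than through any bijection between $T$ and $\rho_\om(T)$. Continuity of $\rho_\om$ itself is built into its construction, since cutting a cylinder along a closed geodesic and regluing after a translation depends continuously on the translation parameter.
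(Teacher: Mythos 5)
Your proof matches the paper's own argument: the paper's "proof" is exactly the preceding paragraph, which identifies $\Rel_{sv}(X,\om)$ with $\rho_\om(\varphi_s(x_0))$ for a linear flow $\varphi_s$ on $\R^m/\Z^m$, quotes the Kronecker--Weyl description of the orbit closure as the affine subtorus $T$, and then pushes forward under the continuous map $\rho_\om$. Your filling-in of the details (compactness of $T$ gives closedness of $\rho_\om(T)$, hence both inclusions) is exactly the intended argument.
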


\begin{cor} \label{cor:realRelhoro}
If $\frac{v_{t(1)} - v_{b(1)}}{w_1},\dots,\frac{v_{t(m)} - v_{b(m)}}{w_m}$ are linearly independent over $\Q$, then the closure of $\Rel_{\R v}(X,\om)$ in $\wt{\Om}\cM_g(\kap)$ contains the horocycle through $(X,\om)$.
\end{cor}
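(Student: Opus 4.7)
The plan is to deduce this essentially immediately from Lemma \ref{lem:realReltwist}, using the linear independence hypothesis to maximize the torus $T$ appearing there and then observing that the image of the horizontal twist map $\rho_\om$ already contains the horocycle through $(X,\om)$ by the explicit horocycle formula.

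First, I would unwind the definition of the subspace $V$ in Lemma \ref{lem:realReltwist}. The hypothesis that the numbers $\frac{v_{t(1)} - v_{b(1)}}{w_1},\dots,\frac{v_{t(m)} - v_{b(m)}}{w_m}$ are linearly independent over $\Q$ means that the only homogeneous $\Q$-linear relation satisfied by these numbers is the trivial one. Therefore every vector $(s_1,\dots,s_m) \in \R^m$ vacuously satisfies all such relations, so $V = \R^m$ and consequently $T_0 = \R^m/\Z^m$. Since $T$ is a translate of $T_0$, we also have $T = \R^m/\Z^m$.

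Next, I would apply Lemma \ref{lem:realReltwist} to conclude that the closure of $\Rel_{\R v}(X,\om)$ in $\wt{\Om}\cM_g(\kap)$ equals $\rho_\om(\R^m/\Z^m)$, i.e.\ the entire image of the horizontal twist map. Finally, the explicit formula
\be
u_s(X,\om) = \rho_\om\left(\frac{t_1 + sh_1}{w_1},\dots,\frac{t_m + sh_m}{w_m}\right)
\ee
recorded earlier in the section shows that the full horocycle $u_\R(X,\om)$ is contained in $\rho_\om(\R^m/\Z^m)$. Combining these two facts gives the desired inclusion of the horocycle in the closure of $\Rel_{\R v}(X,\om)$.

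There is no real obstacle here; this is a short bookkeeping argument that combines the structural description of the real Rel orbit closure from Lemma \ref{lem:realReltwist} with the fact that horocycles are also realized by twisting horizontal cylinders. The only thing to be careful about is interpreting the phrase ``satisfies all homogeneous $\Q$-linear relations'' correctly when the set of such relations is trivial, which forces $V = \R^m$ rather than $V = \{0\}$.
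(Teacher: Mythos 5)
Your proposal is correct and follows exactly the implicit argument the paper has in mind: linear independence over $\Q$ forces the subspace $V$ in Lemma \ref{lem:realReltwist} to be all of $\R^m$, hence $T = \R^m/\Z^m$, and the closure of $\Rel_{\R v}(X,\om)$ is the full image of $\rho_\om$, which contains $u_\R(X,\om)$ by the displayed formula for $u_s(X,\om)$. Nothing to add.
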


In light of Theorem \ref{thm:densecriterion} and Corollary \ref{cor:realRelhoro}, we are interested in finding conditions under which the $\GL^+(2,\R)$-orbit of a holomorphic $1$-form in $\wt{\Om}\cM_g(\kap)$ with a periodic horizontal foliation is dense in its connected component in $\wt{\Om}\cM_g(\kap)$. Note that for the hypothesis of Corollary \ref{cor:realRelhoro} to hold, it must in particular be the case that every horizontal cylinder is twisted along ${\rm Rel}_{\R v}(X,\om)$. \\

\paragraph{\bf Separatrix diagrams.} Associated to any holomorphic $1$-form $(X,\om)$ with a periodic horizontal foliation is a directed graph $\Gam = \Gam(X,\om)$ with a vertex for each zero of $\om$ and a directed edge for each horizontal saddle connection oriented from left to right. The orientation on $X$ gives $\Gam$ the structure of a ribbon graph, and the boundary components of the associated surface with boundary come in pairs, with one pair for each horizontal cylinder. The ribbon graph $\Gam$, together with the pairing of boundary components, is called a {\em separatrix diagram}. See Section 4 of \cite{KZ:components} for more discussion, where this concept was introduced.

As before, let $C_1,\dots,C_m$ be the horizontal cylinders on $(X,\om)$, let $\gam_j \subset C_j \cup Z(\om)$ be a saddle connection crossing $C_j$ from bottom to top, and let $T_j$ and $B_j$ be the top and bottom boundaries of $C_j$, respectively. Each saddle connection $\gam \in \Gam$ has a length $\ell_\gam \in \R_{>0}$. For each horizontal cylinder $C_j$, these lengths satisfy a homogeneous integral linear equation
\begin{equation} \label{eq:topbot}
\sum_{\gam \subset T_j} \ell_\gam = \sum_{\gam \subset B_j} \ell_\gam .
\end{equation}
The holomorphic $1$-form $(X,\om)$ is determined up to isomorphism by its separatrix diagram, the lengths of its horizontal saddle connections, and the heights and twist parameters of its horizontal cylinders. The height and twist parameter of $C_j$ are determined by $\int_{\gam_j} \om$, which lies in the upper half-plane $\H$. The positive solutions in $\R^\Gam$ to the equations in (\ref{eq:topbot}) form a simplicial cone $C(\Gam)$ of dimension $|\Gam| - m + 1$, where $|\Gam|$ is the number of horizontal saddle connections. In this way, we obtain a continuous map
\be
\sig_\Gam : \H^m \times C(\Gam) \ra \wt{\Om}\cM_g(\kap)
\ee
such that any holomorphic $1$-form in the image of $\sig_\Gam$ has a periodic horizontal foliation and the associated separatrix diagram is isomorphic to $\Gam$.

\begin{lem} \label{lem:QRindep}
There is a countable collection of $\R$-linear subspaces $V_j \subset \H^m \times C(\Gam)$ of positive codimension such that for $w \notin \bigcup_{j=1}^\infty V_j$, the $\GL^+(2,\R)$-orbit of $\sig_\Gam(w)$ is dense in its connected component in $\wt{\Om}\cM_g(\kap)$.
\end{lem}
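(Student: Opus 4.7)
The plan is to apply Theorem \ref{thm:Wdense}: it suffices to show that the set of $w \in \H^m \times C(\Gam)$ for which the period coordinates of $\sig_\Gam(w)$ fail to be linearly independent over $\ol{\Q} \cap \R$ is contained in a countable union of positive codimension $\R$-linear subspaces.

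First I would fix a $\Z$-basis $\bet_1, \dots, \bet_{2g+n-1}$ of $H_1(X, Z(\om); \Z)$ adapted to the horizontal cylinder decomposition, consisting of a maximal linearly independent collection of horizontal saddle connections together with the crossing cycles $\gam_1, \dots, \gam_m$. Integrating $\om$ along each $\bet_i$ expresses the period coordinate $\pi_i(w) = \int_{\bet_i}\om$ as an $\R$-linear function, with integer coefficients, of the underlying $(h_j, t_j, \ell_\gam)$ coordinates on $\H^m \times C(\Gam)$. For each nonzero $c = (c_1,\dots,c_{2g+n-1}) \in (\ol{\Q} \cap \R)^{2g+n-1}$, set
\be
V_c = \left\{w \in \H^m \times C(\Gam) : \sum_{i} c_i \pi_i(\sig_\Gam(w)) = 0\right\} ,
\ee
which is clearly an $\R$-linear subspace (the zero set of a $\C$-valued $\R$-linear function). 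The collection $\{V_c\}$ is countable, and for $w \notin \bigcup_c V_c$ the periods of $\sig_\Gam(w)$ are linearly independent over $\ol{\Q}\cap\R$, so Theorem \ref{thm:Wdense} applies.

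The key step is to show each $V_c$ has positive codimension. Let $H \subset \C^{2g+n-1}$ be the $\R$-linear subspace cut out by the conditions that $\pi_i \in \R$ for each $i$ with $\bet_i$ a horizontal saddle connection. Composing $\sig_\Gam$ with period coordinates gives an injective $\R$-linear map landing in $H$, and a dimension count using $|\Gam| = 2g + n - 2$ (from the Euler characteristic of the cylinder decomposition) and the single dependency among the cylinder balance relations $\sum_{\gam \subset T_j}\ell_\gam = \sum_{\gam \subset B_j}\ell_\gam$ yields $\dim_\R(\H^m \times C(\Gam)) = m + 2g + n - 1 = \dim_\R H$. Thus the image is a nonempty open subset of $H$, so $V_c$ has positive codimension in $\H^m \times C(\Gam)$ if and only if $H \not\subset L_c$, where $L_c = \{z \in \C^{2g+n-1} : \sum c_i z_i = 0\}$ is the complex hyperplane defined by $c$. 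But directly $H + iH = \C^{2g+n-1}$: given any $z$, one can split $z = h + ih'$ by letting $h$ carry $\re(z_i)$ and $h'$ carry $\im(z_i)$ on the horizontal indices, and absorbing the remaining coordinates into $h$. Hence no proper complex subspace, and in particular no $L_c$, contains $H$, so each $V_c$ is a proper subspace as required.

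The main obstacle I anticipate is the dimension and injectivity verification for $\sig_\Gam$. This is elementary linear algebra on $H_1(X, Z(\om); \Z)$ once one unwinds the cylinder balance relations defining $C(\Gam)$ and identifies the adapted basis, but it requires careful bookkeeping to confirm the key dimensional equality $\dim_\R \H^m \times C(\Gam) = \dim_\R H$. Everything else reduces to the observation $H + iH = \C^{2g+n-1}$ and a standard countable union argument over $(\ol{\Q} \cap \R)^{2g+n-1} \setminus \{0\}$.
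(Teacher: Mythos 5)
Your proposal is correct and follows the same approach as the paper: the paper's proof is the single assertion that the set of parameters with $\ol{\Q} \cap \R$-linearly dependent periods is a countable union of positive-codimension $\R$-linear subspaces, together with an appeal to Theorem \ref{thm:Wdense}. You have simply supplied the details behind that assertion (the adapted relative homology basis, the dimension count $|\Gam|=2g+n-2$ giving $\dim_\R(\H^m\times C(\Gam))=\dim_\R H$, and the observation $H+iH=\C^{2g+n-1}$), and the argument checks out.
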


\begin{proof}
The set of $w \in \H^m \times C(\Gam)$ such that the period coordinates of $\sig_\Gam(w)$ are linearly dependent over $\ol{\Q} \cap \R$ is a countable union of $\R$-linear subspaces of positive codimension. The lemma follows from Theorem \ref{thm:Wdense}.
\end{proof}

In particular, we can slightly perturb the lengths of the horizontal saddle connections in $(X,\om)$ and the twist parameters of the horizontal cylinders in $(X,\om)$ to obtain a holomorphic $1$-form with a dense $\GL^+(2,\R)$-orbit. Next, consider the length function
\be
\ell_j : C(\Gam) \ra \R_{>0}
\ee
recording the circumference of the horizontal cylinder $C_j$. The cone $C(\Gam)$ is an open subset of an $\R$-linear subspace of $\R^{\Gam}$, and $\ell_j$ is the restriction of a linear functional on this subspace. We can choose a basis for $H_1(X,Z(\om);\Z)$ of the form $\gam_1,\dots,\gam_m,\gam_{m+1},\dots,\gam_{2g+n-1}$ with $\gam_j \in \Gam$ for $j = m+1,\dots,2g+n-1$. With respect to this basis, the period coordinates of holomorphic $1$-forms in the image of $\sig_\Gam$ range over an open subset of $\H^m \times \R^{2g+n-m-1}$. It follows that two length functions $\ell_i$ and $\ell_j$ are collinear over $\R$ if and only if $C_i$ and $C_j$ are homologous.

\begin{lem} \label{lem:recip}
Suppose that $v \in \R^n_0$ is nonzero and that $(X,\om) \in \wt{\Om}\cM_g(\kap)$ has a periodic horizontal foliation with horizontal cylinders $C_1,\dots,C_m$ and separatrix diagram $\Gam$ satisfying the following properties.
\begin{enumerate}
    \item If $Z_i$ and $Z_j$ are the endpoints of a horizontal saddle connection, then $v_i - v_j = 0$.
    \item If $Z_i$ and $Z_j$ are zeros in the top and bottom boundaries, respectively, of the same horizontal cylinder, then $v_i - v_j \neq 0$.
    \item If $i \neq j$, then $C_i$ and $C_j$ are not homologous.
\end{enumerate}
Then there is a countable collection of zero sets of polynomials $U_j \subset \H^m \times C(\Gam)$ of positive codimension such that for $w \notin \bigcup_{j=1}^\infty U_j$, the closure of $\Rel_{\R v}(\sig_\Gam(w))$ contains the horocycle through $\sig_\Gam(w)$.
\end{lem}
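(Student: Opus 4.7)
The plan is to apply Corollary \ref{cor:realRelhoro} pointwise. Writing $\ell_j(w)$ for the circumference of the $j$-th horizontal cylinder of $\sig_\Gam(w)$ --- which, as recalled following (\ref{eq:topbot}), is a positive linear functional of the $C(\Gam)$ coordinates of $w$, independent of the $\H^m$ coordinates --- the goal is to arrange, away from a suitable thin set, that the ratios
\be
r_j(w) = \frac{v_{t(j)} - v_{b(j)}}{\ell_j(w)}, \quad j = 1,\dots,m,
\ee
are linearly independent over $\Q$. Here the numerators are nonzero by hypothesis (2), and $\Rel_{\R v}$ is well-defined on $\sig_\Gam(w)$ by hypothesis (1) together with Theorem \ref{thm:domain}.

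For each nonzero $q = (q_1,\dots,q_m) \in \Q^m$, I would introduce the real polynomial
\be
Q_q(w) = \sum_{j=1}^m q_j(v_{t(j)} - v_{b(j)}) \prod_{i \neq j} \ell_i(w)
\ee
on $\H^m \times C(\Gam)$, obtained from $\sum_j q_j r_j(w) = 0$ by clearing denominators. Since $\ell_i(w) > 0$ throughout $C(\Gam)$, the zero set $U_q$ of $Q_q$ is precisely the locus on which this particular $\Q$-linear relation among the $r_j(w)$ holds. Taking the countable union of the $U_q$ over all nonzero $q \in \Q^m$ then produces the desired family, provided each $Q_q$ is not identically zero.

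The crux of the argument --- and the only place where the hypotheses really enter --- is showing $Q_q \not\equiv 0$. By condition (3) and the remark preceding the lemma, $\ell_i$ and $\ell_j$ are non-proportional as linear functionals on the span of $C(\Gam)$ whenever $i \neq j$, so their kernels are pairwise distinct hyperplanes. Fixing any index $j$ with $q_j \neq 0$, one can therefore choose a point $x$ in the linear span of $C(\Gam)$ with $\ell_j(x) = 0$ but $\ell_i(x) \neq 0$ for all $i \neq j$. At this $x$, every term of $Q_q$ indexed by $k \neq j$ contains the factor $\ell_j(x) = 0$ and hence vanishes, while the surviving $j$-th term equals $q_j(v_{t(j)} - v_{b(j)}) \prod_{i \neq j} \ell_i(x)$, which is nonzero by hypothesis (2) and the choice of $x$. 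Hence $Q_q$ is a nontrivial polynomial, $U_q$ has positive codimension in $\H^m \times C(\Gam)$, and for $w$ outside the countable union $\bigcup_q U_q$ the hypothesis of Corollary \ref{cor:realRelhoro} is met, yielding the lemma.
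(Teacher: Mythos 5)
Your proof is correct and follows essentially the same route as the paper's: clear denominators from the $\Q$-linear relations among $(v_{t(j)}-v_{b(j)})/\ell_j$ to produce polynomials $Q_q$, argue $Q_q\not\equiv 0$ using that the $\ell_j$ are pairwise non-proportional (a consequence of condition (3)), and invoke Corollary~\ref{cor:realRelhoro}. The one substantive difference is in how non-vanishing of $Q_q$ is established: the paper cites Lemma~4.9 of \cite{Wri:cylinder} for the $\R$-linear independence of the reciprocals $1/\ell_j$, whereas you give a self-contained argument by evaluating $Q_q$ at a test point $x$ lying in $\ker(\ell_j)$ but outside $\bigcup_{i\neq j}\ker(\ell_i)$ (implicitly using that a real vector space is not a finite union of proper subspaces). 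Your version is more elementary and avoids the external citation; the paper's version is shorter at the cost of appealing to a black-box lemma. Both are valid.
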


\begin{proof}
The functions $1/\ell_j : \H^m \times C(\Gam) \ra \R$, $j = 1,\dots,m$, are reciprocals of linear functionals restricted to a nonempty open subset. Since $C_i$ and $C_j$ are not homologous for $i \neq j$, no two of these functions are collinear over $\R$. Since $v_{t(j)} - v_{b(j)} \neq 0$ for $j = 1,\dots,m$, the same holds for the functions $(v_{t(j)} - v_{b(j)})/\ell_j$. Then by Lemma 4.9 in \cite{Wri:cylinder}, the functions $(v_{t(j)} - v_{b(j)})/\ell_j$ are linearly independent over $\R$, and in particular over $\Q$. Then each choice of $q_1,\dots,q_m \in \Q$ not all zero determines a relation
\be
\sum_{j=1}^m q_j \frac{v_{t(j)} - v_{b(j)}}{\ell_j(w)} = 0
\ee
which holds only on a subspace of $\H^m \times C(\Gam)$ of positive codimension. After clearing denominators, the lemma follows from Corollary \ref{cor:realRelhoro}.
\end{proof}

\paragraph{\bf Density criterion.} We conclude this section with a criterion for the existence of a dense real Rel flow orbit in a connected component of $\wt{\Om}_1\cM_g(\kap)$, based on the existence of certain holomorphic $1$-forms with a periodic horizontal foliation.

\begin{thm} \label{thm:percriterion}
Fix a nonzero $v \in \R^n_0$, and let $\wt{\cC}_1$ be a connected component of $\wt{\Om}_1\cM_g(\kap)$. Suppose there exists $(X,\om) \in \wt{\cC}_1$ with a periodic horizontal foliation satisfying the following properties.
\begin{enumerate}
    \item If $Z_i$ and $Z_j$ are the endpoints of a horizontal saddle connection, then $v_i - v_j = 0$.
    \item If $Z_i$ and $Z_j$ are zeros in the top and bottom boundaries, respectively, of the same horizontal cylinder, then $v_i - v_j \neq 0$.
    \item Distinct horizontal cylinders are not homologous.
\end{enumerate}
Then there exists $(Y,\eta) \in \wt{\cC}_1$ such that $\Rel_{\R v} (Y,\eta)$ is dense in $\wt{\cC}_1$.
\end{thm}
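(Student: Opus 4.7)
The plan is to reduce the statement to Theorem \ref{thm:densecriterion} by producing from the given $(X,\om)$ a new holomorphic $1$-form $(X',\om') \in \wt{\cC}_1$ whose $\SL(2,\R)$-orbit closure is $\wt{\cC}_1$ and for which the closure of $\Rel_{\R v}(X',\om')$ contains the horocycle through $(X',\om')$. The given $(X,\om)$ serves as a combinatorial template: its separatrix diagram $\Gam$ is retained, while the continuous lengths, heights, and twist parameters are perturbed generically.

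First, I would let $m$ be the number of horizontal cylinders of $(X,\om)$, fix $w_0 \in \H^m \times C(\Gam)$ with $\sig_\Gam(w_0) = (X,\om)$, and pick a small open neighborhood $U$ of $w_0$ whose image under $\sig_\Gam$ lies in the connected component $\wt{\cC}$ of $\wt{\Om}\cM_g(\kap)$ containing $\wt{\cC}_1$. The crucial observation is that hypotheses (1)--(3) depend only on the combinatorics of $\Gam$ together with its ribbon structure and boundary pairing: they record which zeros are endpoints of horizontal saddle connections, which zeros lie on which cylinder boundary, and which pairs of cylinders represent the same homology class of the underlying surface. Consequently, all three conditions persist for $\sig_\Gam(w)$ for every $w \in U$, so the hypothesis of Lemma \ref{lem:recip} is satisfied uniformly on $U$.

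Next, I would apply Lemmas \ref{lem:QRindep} and \ref{lem:recip} simultaneously: each supplies a countable family of positive-codimension subsets of $\H^m \times C(\Gam)$, so their union is meager in $U$ and a Baire category (or measure-theoretic) argument produces $w \in U$ outside every bad set. Writing $(X_w,\om_w) = \sig_\Gam(w)$, the $\GL^+(2,\R)$-orbit of $(X_w,\om_w)$ is then dense in $\wt{\cC}$, and the closure of $\Rel_{\R v}(X_w,\om_w)$ contains the horocycle through $(X_w,\om_w)$. I would then set $(X',\om') = (1/\sqrt{\area(X_w,\om_w)}) \cdot (X_w,\om_w) \in \wt{\cC}_1$. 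Scaling by a positive constant commutes with the horocycle flow and preserves real Rel flow orbits as sets, so the horocycle containment descends to $(X',\om')$. Moreover, the factorization $\GL^+(2,\R) = \R_{>0} \cdot \SL(2,\R)$ upgrades density of the $\GL^+(2,\R)$-orbit of $(X',\om')$ in $\wt{\cC}$ to density of its $\SL(2,\R)$-orbit in $\wt{\cC}_1$: any approximating sequence $g_n \cdot (X',\om') \to (Z,\zet) \in \wt{\cC}_1$ forces $\det(g_n) \to 1$ by continuity of area, and dividing out the scalar parts yields an $\SL(2,\R)$-approximating sequence converging to $(Z,\zet)$.

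Finally, I would verify the remaining hypotheses of Theorem \ref{thm:densecriterion}. Condition (1) together with Theorem \ref{thm:domain} forces $I(v,\om') = \R$, and since real Rel flows preserve area, $\Rel_{\R v}(X',\om') \subset \wt{\cC}_1 = \cM$, where $\cM$ denotes the $\SL(2,\R)$-orbit closure of $(X',\om')$. Theorem \ref{thm:densecriterion} then supplies $(Y,\eta) \in \wt{\cC}_1$ with $\Rel_{\R v}(Y,\eta)$ dense in $\wt{\cC}_1$. The only subtle point is the combinatorial observation that conditions (1)--(3) are properties of the separatrix diagram $\Gam$ and thus persist under the parametrization $\sig_\Gam$; beyond that, the argument is a clean synthesis of the density criterion from Section \ref{sec:densecriterion} and the two genericity lemmas of this section.
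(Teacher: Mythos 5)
Your proposal is correct and follows essentially the same route as the paper: apply Lemmas \ref{lem:QRindep} and \ref{lem:recip} to the separatrix diagram $\Gam$ to find a generic point of $\H^m\times C(\Gam)$ mapping under $\sig_\Gam$ to a holomorphic $1$-form with dense $\SL(2,\R)$-orbit whose real Rel orbit closure contains its horocycle, and then invoke Theorem \ref{thm:densecriterion}. The only cosmetic difference is that the paper handles the area-$1$ normalization by noting the complement of the bad sets is $\R_{>0}$-scaling invariant and so meets the area-$1$ locus directly, whereas you first pick a generic point and then rescale, checking that rescaling preserves the Rel orbit and horocycle and that $\GL^+(2,\R)$-orbit density descends to $\SL(2,\R)$-orbit density via continuity of area; both are fine.
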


\begin{proof}
Let $\Gam$ be the separatrix diagram associated to $(X,\om)$, and let $U_j, V_j \subset \H^m \times C(\Gam)$ be the subspaces as in Lemmas \ref{lem:QRindep} and \ref{lem:recip}. The complement of $\bigcup_{j=1}^\infty (U_j \cup V_j)$ in $\H^m \times C(\Gam)$ is dense in $\H^m \times C(\Gam)$ and invariant under scaling by $\R_{>0}$, so there is $(X^\pr,\om^\pr) \in \wt{\cC}_1$ in the image of this complement under $\sig_\Gam$. The closure of $\Rel_{\R v}(X^\pr,\om^\pr)$ in $\wt{\Om}\cM_g(\kap)$ contains the horocycle through $(X^\pr,\om^\pr)$, and the $\SL(2,\R)$-orbit of $(X^\pr,\om^\pr)$ is dense in $\wt{\cC}_1$. Thus, by Theorem \ref{thm:densecriterion} there exists $(Y,\eta) \in \wt{\cC}_1$ such that $\Rel_{\R v}(Y,\eta)$ is dense in $\wt{\cC}_1$.
\end{proof}


\section{Periodic horizontal foliations with non-homologous cylinders} \label{sec:construction}

In this section, we prove Theorem \ref{thm:denserealRel} and Theorem \ref{thm:denseV}. Throughout, we fix $g \geq 2$, a partition $\kap = (k_1,\dots,k_n)$ of $2g-2$ with $n > 1$, and a nonzero $v \in \R^n_0$. To prove Theorem \ref{thm:denserealRel}, by Theorem \ref{thm:percriterion} it is enough to establish the following.

\begin{thm} \label{thm:constr}
Let $\wt{\cC}$ be a connected component of $\wt{\Om}\cM_g(\kap)$. There exists $(X,\om) \in \wt{\cC}$ with a periodic horizontal foliation satisfying the following properties.
\begin{enumerate}
    \item If $Z_i$ and $Z_j$ are the endpoints of a horizontal saddle connection, then $v_i - v_j = 0$.
    \item If $Z_i$ and $Z_j$ are zeros in the top and bottom boundaries, respectively, of the same horizontal cylinder, then $v_i - v_j \neq 0$.
    \item Distinct horizontal cylinders are not homologous.
\end{enumerate}
\end{thm}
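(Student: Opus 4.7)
My overall strategy follows the outline in the introduction: reduce to a ``regular'' base case via well-known slit surgeries, then handle the base case via a careful connected sum of $g$ flat tori. The regular base case I target is the one in which every $k_j \leq g-1$ and the components $v_1, \ldots, v_n$ are pairwise distinct.

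For the reductions, suppose first that $v_i = v_j$ for some $i \neq j$. I coarsen $\kap$ to $\kap'$ by replacing the parts $k_i, k_j$ with a single part of size $k_i + k_j + 1$, with induced vector $v' \in \R^{n-1}_0$. Given a base-case construction $(X', \om') \in \wt{\Om}\cM_g(\kap')$ satisfying (1)--(3) for $v'$, I apply the slit construction of Section \ref{sec:background} at the merged zero with a very short horizontal slit, splitting it into zeros $Z_i, Z_j$ of orders $k_i, k_j$ joined by a single new horizontal saddle connection. Labelling so that the split zeros receive $v_i = v_j$, condition (1) holds for the new saddle connection, and conditions (2)--(3) are preserved because the slit is horizontal and short. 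Iterating reduces to the case of distinct components. Analogously, if some $k_j > g-1$, I apply a slit surgery at $Z_j$ that splits it into two zeros of smaller orders with equal $v$-values, reducing to the regime $k_j \leq g-1$.

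For the base case I build $(X, \om)$ as a slit connected sum of $g$ flat tori $T_1, \ldots, T_g$, each $(\C/\Lam_a, dz)$ with a rational horizontal direction so that its horizontal foliation is periodic. At each prospective zero $Z_j$ I specify $k_j + 1$ short vertical slit endpoints distributed among the $T_a$, and choose a cyclic order of slits around each zero that produces a surface in $\wt{\Om}\cM_g(\kap)$. The horizontal cylinders of $(X, \om)$ are subcylinders of the $T_a$; since the core of such a cylinder is detected in absolute homology by the $T_a$-generator, distinct horizontal cylinders are non-homologous in $X$, yielding (3). I then color each zero $Z_j$ by $v_j$ and arrange the slits so that (a) every horizontal saddle connection has both endpoints of the same color, giving (1), and (b) for each horizontal cylinder, the top and bottom boundaries touch zeros of different colors, giving (2). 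The hypotheses $k_j + 1 \leq g$ and the distinctness of the $v_i$ together provide exactly enough room to satisfy (a) and (b) simultaneously across all $g$ tori.

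Finally, to land in a prescribed connected component I invoke Theorem \ref{thm:KZ}. For the hyperelliptic component of $(g-1, g-1)$ -- the only hyperelliptic case with $n > 1$ -- I take the tori and slits symmetric about a common involution. In cases with all $k_j$ even, I compute $\phi(\om)$ via (\ref{eq:spin}) on the natural symplectic basis coming from the standard generators of the $T_a$, and verify that a local modification such as a Dehn twist supported in a single horizontal cylinder, or a reordering of slits at one zero, flips $\phi(\om)$ by $1 \pmod 2$ without disturbing (1)--(3), so both parities are realized. The main obstacle I anticipate lies in the combinatorial design of the base case: simultaneously fitting the partition $\kap$, the $v$-coloring, the cyclic data at each zero, and the constraints (1)--(3) requires a carefully engineered slit arrangement on the $g$ tori, and verifying this works in full generality under only $k_j \leq g-1$ and distinctness of the $v_i$ is almost certainly the technical heart of the paper, likely demanding a case analysis on $\kap$ together with ad hoc adjustments in low genus.
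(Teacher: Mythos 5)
Your overall skeleton (base case via a connected sum of flat tori, plus reductions via slit surgeries) matches the paper's strategy, and your treatment of the case where some $v_i = v_j$ is essentially the paper's Case~5 apart from a small arithmetic slip: merging $Z_i, Z_j$ of orders $k_i, k_j$ via the slit construction produces a single zero of order $k_i + k_j$, not $k_i + k_j + 1$. However, there are two genuine gaps.

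First, your reduction for the case where some $k_j > g-1$ does not work. Splitting $Z_j$ into two zeros of smaller orders with equal $v$-values keeps the genus fixed but increases the number of zeros, so you land in a \emph{finer} stratum $\wt{\Om}\cM_g(\kap'')$ with $n+1$ parts, not in the target $\wt{\Om}\cM_g(\kap)$. Moreover the new pair of equal-$v$ zeros puts you right back in the non-distinct-$v$ situation, and the merge you would then perform just undoes the split, so the reduction is circular. The constraint $k_j \leq g-1$ is really a genus constraint, and the only way to fix it while holding $\kap$ fixed is to change the genus: the paper's Case~4 first builds the configuration in a \emph{lower-genus} stratum $\wt{\Om}\cM_{g-a}(k_1,\dots,k_{n-1},k_n')$ with $k_n' = \sum_{j<n}k_j$, where every order is automatically $\leq (g-a)-1$, and then bubbles $a$ handles at $Z_n$ (Lemma~11 of \cite{KZ:components}), which raises both the genus and the order of $Z_n$ by $2$ per step while leaving $n$, the other $k_j$'s, and the spin parity unchanged, and preserves properties (1)--(3).

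Second, your proposed spin-flip is not correct. A Dehn twist supported in a single horizontal cylinder only shifts that cylinder's twist parameter (a continuous deformation inside the stratum component), so it cannot change the connected-component invariant $\phi(\om)$; and merely reordering slits at one zero is too vague to guarantee landing in the same stratum with the right zero orders and with the opposite parity while still satisfying (1)--(3). The paper's Case~2 uses a specific global surgery---cutting along two homologous core curves $\al_1,\al_2$ of equal circumference and regluing opposite sides---which produces a new closed curve $\bet_1'$ with $\operatorname{ind}(\bet_1') = 1$ and hence flips $\phi$ by $1 \pmod 2$; this is then followed by a careful verification that (1)--(3) and non-hyperellipticity survive. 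You would need an argument of comparable specificity here.
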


Before beginning the proofs of Theorems \ref{thm:constr} and \ref{thm:denseV}, we refer to Figure \ref{fig:slittori11} for an example in the simplest case where $\kap = (1,1)$, which illustrates the basic idea of our constructions. In the top image, the holomorphic $1$-form $(X,\om)$ is presented as a connected sum of tori $T_1,T_2$ with periodic horizontal and vertical foliations. In the bottom image, the horizontal cylinders $C_1,C_2,C_3$ and the vertical cylinders $D_1,D_2,D_3$ are labelled. If $w_j$ is the circumference of $C_j$ and $w_j^\pr$ is the circumference of $D_j$, then $w_3 = w_1 + w_2$ and $w_3^\pr = w_1^\pr + w_2^\pr$. Let $z_1 \in \C$ be the integral of $\om$ along one of the slits from $Z_2$ to $Z_1$. The horizontal foliation of $(X,\om)$ clearly satisfies the conditions of Theorem \ref{thm:constr}, thus Theorem \ref{thm:denserealRel} holds in the case where $\kap = (1,1)$. Regarding Theorem \ref{thm:denseV}, note that the vertical foliation of $(X,\om)$ satisfies analogous conditions. In other words, the horizontal foliation of the rotated holomorphic $1$-form $(X,-i\om)$ also satisfies the conditions of Theorem \ref{thm:constr}. We can choose $w_1,w_2,w_1^\pr,w_2^\pr,z_1$ such that the reciprocals $\frac{1}{w_1},\frac{1}{w_2},\frac{1}{w_3}$ are linearly independent over $\Q$, the reciprocals $\frac{1}{w_1^\pr},\frac{1}{w_2^\pr},\frac{1}{w_3^\pr}$ are linearly independent over $\Q$, and the period coordinates $w_1,w_2,iw_1^\pr,iw_2^\pr,z_1$ are linearly independent over $\ol{\Q} \cap \R$. Then by Corollary \ref{cor:realRelhoro}, the closure of ${\rm Rel}_{\R v}(X,\om)$ contains $u_\R (X,\om)$, and similarly the closure of ${\rm Rel}_{i\R v}(X,\om)$ contains $v_\R (X,\om)$. Then the closure of the leaf of $\cA(\kap)$ through $(X,\om)$ is $\SL(2,\R)$-invariant. By Theorem \ref{thm:Wdense}, the $\SL(2,\R)$-orbit of $(X,\om)$ is dense in $\wt{\Om}_1\cM_2(1,1)$. Thus, Theorem \ref{thm:denseV} holds in the case where $\kap = (1,1)$.

We will split the proof of Theorem \ref{thm:constr} into 5 cases. Each case will build off of the construction in the first case, which is based on repeated use of the slit construction from Section \ref{sec:background}. Our construction is not the simplest possible, but it does have some advantages. In particular, the holomorphic $1$-forms we construct will also have periodic vertical foliations, which we will analyze along the way, in order to prove Theorem \ref{thm:denseV} afterward.

\begin{figure}
    \centering
    \includegraphics[width=0.4\textwidth]{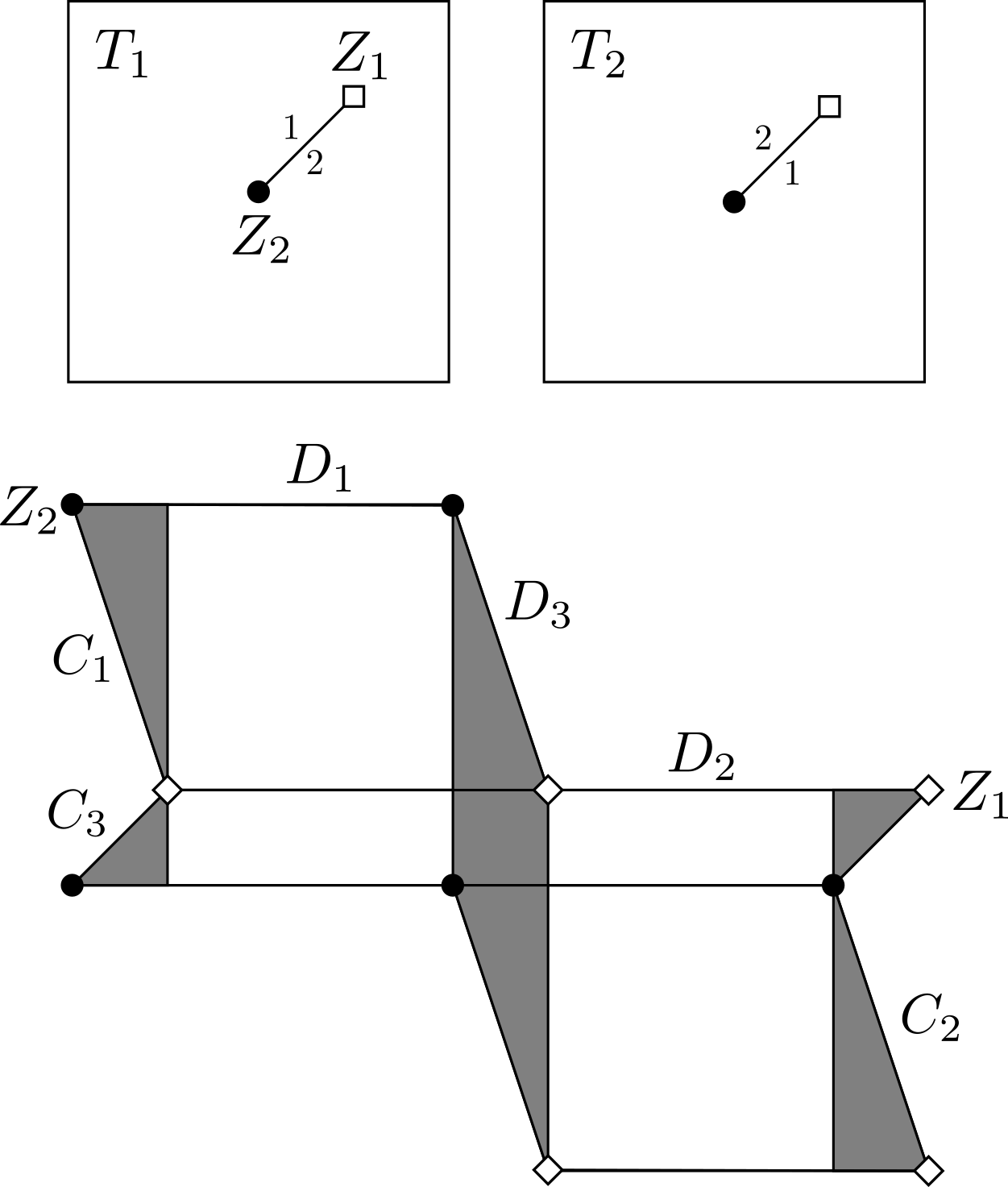}
    \caption{Two presentations of the same $(X,\om) \in \wt{\Om}\cM_2(1,1)$ satisfying the conditions of Theorem \ref{thm:constr}.}
    \label{fig:slittori11}
\end{figure}

\begin{proof} (of Theorem \ref{thm:constr})
The proof is given in $5$ cases. \\

\paragraph{\bf Case 1:} We make the following assumptions.
\begin{itemize}
    \item The components of $v$ are distinct.
    \item For all $j$, we have $k_j \leq g - 1$.
    \item If $k_j$ is even for all $j$, then the associated parity of spin structure is $g \hmod 2$.
    \item If $g \geq 3$, then the image of $\wt{\cC}$ in $\Om\cM_g(\kap)$ is a nonhyperelliptic component.
\end{itemize}
For $1 \leq j \leq g$, let
\be
T_j = (\C/\Lam_j,dz)
\ee
be a flat torus with periodic horizontal and vertical foliations, and let $\pi_j : \C \ra T_j$ be the associated projection. Let $\alpha_j \subset T_j$ be a closed geodesic with $\int_{\al_j} dz \in \R_{>0}$, and let $\beta_j \subset T_j$ be a closed geodesic with $\int_{\bet_j} dz \in i\R_{>0}$. Since $k_j \leq g - 1$ for $1 \leq j \leq n$, and $k_1 + \cdots + k_n = 2g-2$, there is a maximal $r$ such that $1 \leq r \leq n-1$ and
\be
k_1 + \cdots + k_r \leq g - 1 ,
\ee
and a minimal $r^\pr$ such that $2 \leq r^\pr \leq n$ and
\be
k_{r^\pr} + \cdots + k_n \leq g - 1 .
\ee
We have
\be
r^\pr = \begin{cases}
r + 1, \quad \text{ if } \; k_1 + \cdots + k_r = g-1 , \\
r + 2, \quad \text{ if } \; k_1 + \cdots + k_r < g-1 .
\end{cases}
\ee
Let
\be
A = \min_{1 \leq j \leq g} \int_{\al_j} dz > 0, \quad B = \min_{1 \leq j \leq g} -i \int_{\bet_j} dz > 0 . 
\ee
Choose $x_j,y_j \in \R$ for $1 \leq j \leq r$ and $r^\pr \leq j \leq n$, such that
\begin{align}
-\frac{A}{2} < x_n < \cdots < x_{r^\pr} < & \; 0 < x_r < \cdots < x_1 < \frac{A}{2} \label{eq:smallx} \\
-\frac{B}{2} < y_n < \cdots < y_{r^\pr} < & \; 0 < y_r < \cdots < y_1 < \frac{B}{2} \label{eq:smally} \\
-\frac{y_{r^\pr}}{x_{r^\pr}} < \cdots < -\frac{y_n}{x_n} < & \; 0 < \frac{y_1}{x_1} < \cdots < \frac{y_r}{x_r} \label{eq:slope}
\end{align}
and let $z_j = x_j + iy_j \in \C$. If $r^\pr = r + 2$, let $x_{r+1} = y_{r+1} = z_{r+1} = 0$. For $1 \leq j \leq r$ and $r^\pr \leq j \leq n$, let
\be
I_j = [0,z_j] = [0,1] \cdot z_j \subset \C
\ee
be the oriented line segment from $0$ to $z_j$. For $1 \leq j \leq r$ and $r^\pr \leq j \leq n$, and for $1 \leq k \leq g$, the projection $\pi_k(I_j) \subset T_k$ is an embedded oriented segment. Moreover, the segments
\be
\pi_k(I_1), \dots, \pi_k(I_r), \pi_k(I_n), \dots, \pi_k(I_{r^\pr})
\ee
are disjoint except at their common starting point $\pi_k(0)$, and are in counterclockwise order around $\pi_k(0)$. By replacing $\al_k$ and $\bet_k$ with parallel closed geodesics, we may assume that $\al_k$ and $\bet_k$ are disjoint from all of the segments $\pi_k(I_j)$.

Below, we will repeatedly apply the slit construction from Section \ref{sec:background} to the tori $T_1,\dots,T_g$. Each application of the slit construction will involve a subset of the segments $\pi_1(I_j),\dots,\pi_g(I_j)$ for some $j$. The segments involved will always be embedded, disjoint except at a common starting point, and will not contain any zeros except at their starting point. Our construction takes place in $r + (n - r^\pr + 1)$ steps. If $r^\pr = r + 1$, then there are $n$ steps, and if $r^\pr = r + 2$, then there are $n - 1$ steps.

In step $1$, we apply the slit construction to the segments
\be
\pi_1(I_1),\dots,\pi_{k_1+1}(I_1)
\ee
to obtain a holomorphic $1$-form $(Y_1,\eta_1)$ on a surface of genus $k_1 + 1$, with a zero $Z_1$ of order $k_1$ coming from the points $\pi_1(z_1),\dots,\pi_{k_1+1}(z_1)$, and a zero $Z_1^\pr$ of order $k_1$ coming from the points $\pi_1(0),\dots,\pi_{k_1+1}(0)$. In step $2$, we apply the slit construction to the segments
\be
\pi_{k_1+1}(I_2),\dots,\pi_{k_1+k_2+1}(I_2)
\ee
to obtain a holomorphic $1$-form $(Y_2,\eta_2)$ on a surface of genus $k_1 + k_2 + 1$. The order of the zero $Z_1$ from step $1$ is unchanged. Additionally, $\eta_2$ has a zero $Z_2$ of order $k_2$ coming from the points $\pi_{k_1+1}(z_2),\dots,\pi_{k_1+k_2+1}(z_2)$, and a zero $Z_2^\pr$ of order $k_1 + k_2$ coming from the points $Z_1^\pr,\pi_{k_1+2}(0),\dots,\pi_{k_1+k_2+1}(0)$. We continue in this way through step $r$. In step $s$, where $2 \leq s \leq r$, we apply the slit construction to the segments
\be
\pi_{k_1+\cdots+k_{s-1}+1}(I_s),\dots,\pi_{k_1+\cdots+k_s+1}(I_s)
\ee
to obtain a holomorphic $1$-form $(Y_s,\eta_s)$ on a surface of genus $k_1 + \cdots + k_s + 1$. The orders of the zeros $Z_1,\dots,Z_{s-1}$ from steps $1$ through $s-1$ are unchanged. Additionally, $\eta_s$ has a zero $Z_s$ of order $k_s$ coming from the points
\be
\pi_{k_1+\cdots+k_{s-1}+1}(z_s),\dots,\pi_{k_1+\cdots+k_s+1}(z_s) ,
\ee
and a zero $Z_s^\pr$ of order $k_1 + \cdots + k_s$ coming from the points
\be
Z_{s-1}^\pr,\pi_{k_1+\cdots+k_{s-1}+2}(0),\dots,\pi_{k_1+\cdots+k_s+1}(0) .
\ee

In step $r + 1$, we apply the slit construction to the segments
\be
\pi_g(I_n), \pi_{g-1}(I_n), \dots, \pi_{g-k_n}(I_n)
\ee
to obtain a holomorphic $1$-form $(Y_{r+1},\eta_{r+1})$. If
\be
k_1 + \cdots + k_r + 1 < g - k_n,
\ee
then $Y_{r+1}$ has genus $k_n + 1$. In this case, $\eta_{r+1}$ has a zero $Z_n$ of order $k_n$ coming from the points $\pi_g(z_n),\pi_{g-1}(z_n),\dots,\pi_{g-k_n}(z_n)$, and a zero $Z_n^\pr$ of order $k_n$ coming from the points $\pi_g(0)$, $\pi_{g-1}(0),\dots,\pi_{g-k_n}(0)$. Otherwise, $Y_{r+1}$ has genus $g$. In this case, for
\be
g - k_n + 1 \leq j \leq k_1 + \cdots + k_r + 1,
\ee
by (\ref{eq:slope}) the counterclockwise angle from $\pi_j(I_n)$ to $\pi_{j-1}(I_n)$ around their common starting point in $(Y_r,\eta_r)$ is $2\pi$. The orders of the zeros $Z_1,\dots,Z_r$ from steps $1$ through $r$ are unchanged. Additionally, $\eta_{r+1}$ has a zero $Z_n$ of order $k_n$ coming from the points $\pi_g(z_n),\pi_{g-1}(z_n),\dots,\pi_{g-k_n}(z_n)$, and a zero $Z_n^\pr$ of order
\be
(2g - 2) - (k_1 + \cdots + k_r + k_n)
\ee
coming from some of the starting points of the slits. We continue in this way through step $r + (n-r^\pr+1)$. In step $r+s$, where $2 \leq s \leq n-r^\pr+1$, we apply the slit construction to the segments
\be
\pi_{g-(k_{n-s+2}+\cdots+k_n)}(I_{n-s+1}),\pi_{g-(k_{n-s+2}+\cdots+k_n+1)}(I_{n-s+1}),\dots,\pi_{g-(k_{n-s+1}+\cdots+k_n)}(I_{n-s+1})
\ee
to obtain a holomorphic $1$-form $(Y_{r+s},\eta_{r+s})$. If
\be
k_1 + \cdots + k_r + 1 < g - (k_{n-s+1} + \cdots + k_n) ,
\ee
then $Y_{r+s}$ has genus $k_{n-s+1} + \cdots + k_n + 1$. In this case, the orders of the zeros $Z_n,Z_{n-1},\dots$, $Z_{n-s+2}$ from steps $r+1$ through $r+s-1$ are unchanged. Additionally, $\eta_{r+s}$ has a zero $Z_{n-s+1}$ of order $k_{n-s+1}$ coming from the points
\be
\pi_{g-(k_{n-s+2}+\cdots+k_n)}(z_{n-s+1}),\pi_{g-(k_{n-s+2}+\cdots+k_n+1)}(z_{n-s+1}),\dots,\pi_{g-(k_{n-s+1}+\cdots+k_n)}(z_{n-s+1}) ,
\ee
and a zero $Z_{n-s+1}^\pr$ of order $k_n + k_{n-1} + \cdots + k_{n-s+1}$ coming from the points
\be
Z_{n-s+2}^\pr,\pi_{g-(k_{n-s+2}+\cdots+k_n)}(0),\pi_{g-(k_{n-s+2}+\cdots+k_n+1)}(0),\dots,\pi_{g-(k_{n-s+1}+\cdots+k_n)}(0) .
\ee
Otherwise, $Y_{r+s}$ has genus $g$. In this case, for
\be
g - (k_{n-s+1} + \cdots + k_n) + 1 \leq j \leq \min\{g - (k_{n-s+2} + \cdots + k_n) + 1, k_1 + \cdots + k_r + 1\},
\ee
by (\ref{eq:slope}) the counterclockwise angle from $\pi_j(I_{n-s+1})$ to $\pi_{j-1}(I_{n-s+1})$ around their common starting point in $(Y_{r+s-1},\eta_{r+s-1})$ is $2\pi$. The orders of the zeros $Z_1,\dots,Z_r,Z_{n-s+2},\dots,Z_n$ from steps $1$ through $r+s-1$ are unchanged. Additionally, $\eta_{r+s}$ has a zero $Z_{n-s+1}$ of order $k_{n-s+1}$ coming from the points
\be
\pi_{g-(k_{n-s+2}+\cdots+k_n)}(z_{n-s+1}),\pi_{g-(k_{n-s+2}+\cdots+k_n+1)}(z_{n-s+1}),\dots,\pi_{g-(k_{n-s+1}+\cdots+k_n)}(z_{n-s+1}),
\ee
and a zero $Z_{n-s+1}^\pr$ of order
\be
2g - 2 - (k_1 + \cdots + k_r + k_{n-s+1} + \cdots + k_n)
\ee
coming from some of the starting points of the slits.

Now let
\begin{equation} \label{eq:X0w0}
(X_0,\omega_0) = (Y_{r+(n-r^\pr+1)},\eta_{r+(n-r^\pr+1)}) .
\end{equation}
Note that $X_0$ is connected and of genus $g$, since either $r^\pr = r + 1$, in which case $k_1 + \cdots + k_r + 1 = g$, or $r^\pr = r + 2$ and $k_{r+1} \leq g - 1$, in which case
\be
k_1 + \cdots + k_r + 1 \geq g - (k_{r^\pr} + \cdots + k_n) .
\ee
If $r^\pr = r+1$, then the points on $X_0$ arising from the starting points of the slits are not zeros of $\om_0$, and in particular $Z_{r^\pr}^\pr$ is not a zero of $\om_0$. If $r^\pr = r+2$, then letting $Z_{r+1} = Z_{r^\pr}^\pr$, we have that $Z_{r+1}$ is a zero of order $k_{r+1}$ coming from some of the starting points of the slits. Thus, the zeros $Z_1,\dots,Z_n$ of $\om_0$ have orders $k_1,\dots,k_n$, respectively, so
\be
(X_0,\om_0) \in \wt{\Om}\cM_g(\kap) .
\ee

Next, we describe the horizontal cylinders on $(X_0,\om_0)$ and we verify that $(X_0,\om_0)$ satisfies conditions (1), (2), and (3) in Theorem \ref{thm:constr}. For the purpose of proving Theorem \ref{thm:denseV}, we also describe the vertical cylinders on $(X_0,\om_0)$ and we verify that $(X_0,-i\om_0)$ satisfies conditions (1), (2), and (3) in Theorem \ref{thm:constr}. The horizontal and vertical foliations of $(X_0,\om_0)$ are clearly periodic. Abusing notation, for $1 \leq j \leq g$, denote by $T_j \subset X_0$ the open subset arising from the complement of the slits in $T_j$.

For $1 \leq j \leq g$, there is a unique horizontal cylinder $C_j$ contained in $T_j$, and there is a unique vertical cylinder $D_j$ contained in $T_j$. We have $\al_j \subset C_j$ and $\bet_j \subset D_j$. If $j \leq k_1 + \cdots + k_r + 1$, then let $i_j$ be minimal such that $1 \leq i_j \leq r$ and
\be
j \leq k_1 + \cdots + k_{i_j} + 1,
\ee
and otherwise, let $i_j = 0$. If $j \geq g - (k_{r^\pr} + \cdots + k_n)$, then let $i_j^\pr$ be maximal such that $r^\pr \leq i_j^\pr \leq n$ and
\be
j \geq g - (k_{i_j^\pr} + \cdots + k_n),
\ee
and otherwise, let $i_j^\pr = 0$. Letting $h_j$ be the height of $C_j$ and $h_j^\pr$ the height of $D_j$, we have
\be
h_j = -i\int_{\beta_j} \om_0 - (y_{i_j} - y_{i_j^\pr}), \quad h_j^\pr = \int_{\alpha_j} \om_0 - (x_{i_j} - x_{i_j^\pr}) .
\ee
Letting $w_j$ be the circumference of $C_j$ and $w_j^\pr$ the circumference of $D_j$, we have
\be
w_j = \int_{\alpha_j} \om_0, \quad w_j^\pr = -i \int_{\beta_j} \om_0 .
\ee

For $1 \leq j \leq r$, there is a unique horizontal cylinder $C_{g+j} \subset X_0$ such that $C_{g+j}$ intersects $T_k$ if and only if $1 \leq k \leq k_1 + \cdots + k_j + 1$, and there is a unique vertical cylinder $D_{g+j} \subset X_0$ such that $D_{g+j}$ intersects $T_k$ if and only if $1 \leq k \leq k_1 + \cdots + k_j + 1$. Letting $h_{g+j}$ be the height of $C_{g+j}$ and $h_{g+j}^\pr$ the height of $D_{g+j}$, we have
\be
h_{g+j} = y_j - y_{j+1}, \quad h_{g+j}^\pr = x_j - x_{j+1} .
\ee
Letting $w_{g+j}$ be the circumference of $C_{g+j}$ and $w_{g+j}^\pr$ the circumference of $D_{g+j}$, we have
\begin{align*}
w_{g+j} &= w_1 + \cdots + w_{k_1 + \cdots + k_j + 1}, \\
w_{g+j}^\pr &= w_1^\pr + \cdots + w_{k_1 + \cdots + k_j + 1}^\pr .
\end{align*}

For $r^\pr \leq j \leq n$, there is a unique horizontal cylinder $C_{g+j-1} \subset X_0$ such that $C_{g+j-1}$ intersects $T_k$ if and only if $g - (k_j+\cdots+k_n) \leq k \leq g$, and there is a unique vertical cylinder $D_{g+j-1} \subset X_0$ such that $D_{g+j-1}$ intersects $T_k$ if and only if $g - (k_j+\cdots+k_n) \leq k \leq g$. Letting $h_{g+j-1}$ be the height of $C_{g+j-1}$ and $h_{g+j-1}^\pr$ the height of $D_{g+j-1}$, we have
\be
h_{g+j-1} = y_{j-1} - y_j, \quad h_{g+j-1}^\pr = x_{j-1} - x_j .
\ee
Letting $w_{g+j-1}$ be the circumference of $C_{g+j-1}$ and $w_{g+j-1}^\pr$ the circumference of $D_{g+j-1}$, we have
\begin{align*}
w_{g+j-1} &= w_{g-(k_j+\cdots+k_n)} + \cdots + w_g, \\
w_{g+j-1}^\pr &= w_{g-(k_j+\cdots+k_n)}^\pr + \cdots + w_g^\pr .
\end{align*}
If $r^\pr = r+1$, the cylinders $C_{g+r}$ and $D_{g+r}$ have been mentioned twice above. The cylinders $C_1,\dots,C_{g+n-1}$ and $D_1,\dots,D_{g+n-1}$ account for all of the horizontal and vertical cylinders on $(X_0,\om_0)$.

For $1 \leq j \leq g$, the top boundary of $C_j$ consists of a single saddle connection. If $i_j^\pr \neq 0$, this saddle connection joins $Z_{i_j^\pr}$ to itself, and otherwise, this saddle connection joins $Z_{r+1}$ to itself. The bottom boundary of $C_j$ also consists of a single saddle connection. If $i_j \neq 0$, this saddle connection joins $Z_{i_j}$ to itself, and otherwise, this saddle connection joins $Z_{r+1}$ to itself. At least one of $i_j$ and $i_j^\pr$ is nonzero. If $i_j = 0$, then $r^\pr = r + 2$ and $i_j^\pr \geq r^\pr > r + 1$. If $i_j^\pr = 0$, then $i_j \leq r < r + 1$. If both $i_j$ and $i_j^\pr$ are nonzero, then $i_j \leq r < r^\pr \leq i_j^\pr$.  Thus, the zeros in the top and bottom boundaries of $C_j$ are distinct. The same holds for the left and right boundaries of $D_j$, respectively.

For $1 \leq j \leq r$, the top boundary of $C_{g+j}$ consists of $k_j + 1$ saddle connections joining $Z_j$ to itself. If $j = 1$, these saddle connections lie in the bottom boundaries of
\be
C_1,\dots,C_{k_1+1},
\ee
in cyclic order from left to right, and otherwise, these saddle connections lie in the bottom boundaries of
\be
C_{g+j-1},C_{k_1+\cdots+k_{j-1}+2},\dots,C_{k_1+\cdots+k_j+1},
\ee
in cyclic order from left to right. If $j < r$, the bottom boundary of $C_{g+j}$ consists of a single saddle connection joining $Z_{j+1}$ to itself and lying in the top boundary of $C_{g+j+1}$. If $r^\pr = r + 2$, the bottom boundary of $C_{g+r}$ consists of $g - (k_{r^\pr} + \cdots + k_n)$ saddle connections joining $Z_{r+1}$ to itself and lying in the top boundaries of
\be
C_1,\dots,C_{g - (k_{r^\pr} + \cdots + k_n)-1},C_{g+r+1},
\ee
in cyclic order from left to right. Analogous statements hold for the left and right boundaries of $D_{g+j}$.

For $r^\pr \leq j \leq n$, the bottom boundary of $C_{g+j-1}$ consists of $k_j+1$ saddle connections joining $Z_j$ to itself. If $j = m$, these saddle connections lie in the top boundaries of
\be
C_{g-k_n},\dots,C_g,
\ee
in cyclic order from left to right, and otherwise, these saddle connections lie in the top boundaries of
\be
C_{g-(k_j+\cdots+k_n)},\dots,C_{g-(k_{j+1}+\cdots+k_n)-1},C_{g+j},
\ee
in cyclic order from left to right. If $r^\pr < j$, the top boundary of $C_{g+j-1}$ consists of a single saddle connection joining $Z_{j-1}$ to itself and lying in the bottom boundary of $C_{g+j-2}$. If $r^\pr = r + 2$, the top boundary of $C_{g+r+1}$ consists of $g - (k_1 + \cdots + k_r)$ saddle connections joining $Z_{r+1}$ to itself and lying in the bottom boundaries of
\be
C_{g+r},C_{k_1+\cdots+k_r+2},\dots,C_g,
\ee
in cyclic order from left to right. If $r^\pr = r + 1$, the top boundary of $C_{g+r}$ was described in the previous paragraph, and the bottom boundary of $C_{g+r}$ was described in this paragraph. Again, analogous statements hold for the left and right boundaries of $D_{g+j-1}$. We have verified that $(X_0,\om_0)$ satisfies conditions (1) and (2), and we have verified that $(X_0,-i\om_0)$ satisfies conditions (1) and (2).

\begin{figure}
    \centering
    \includegraphics[width=0.8\textwidth]{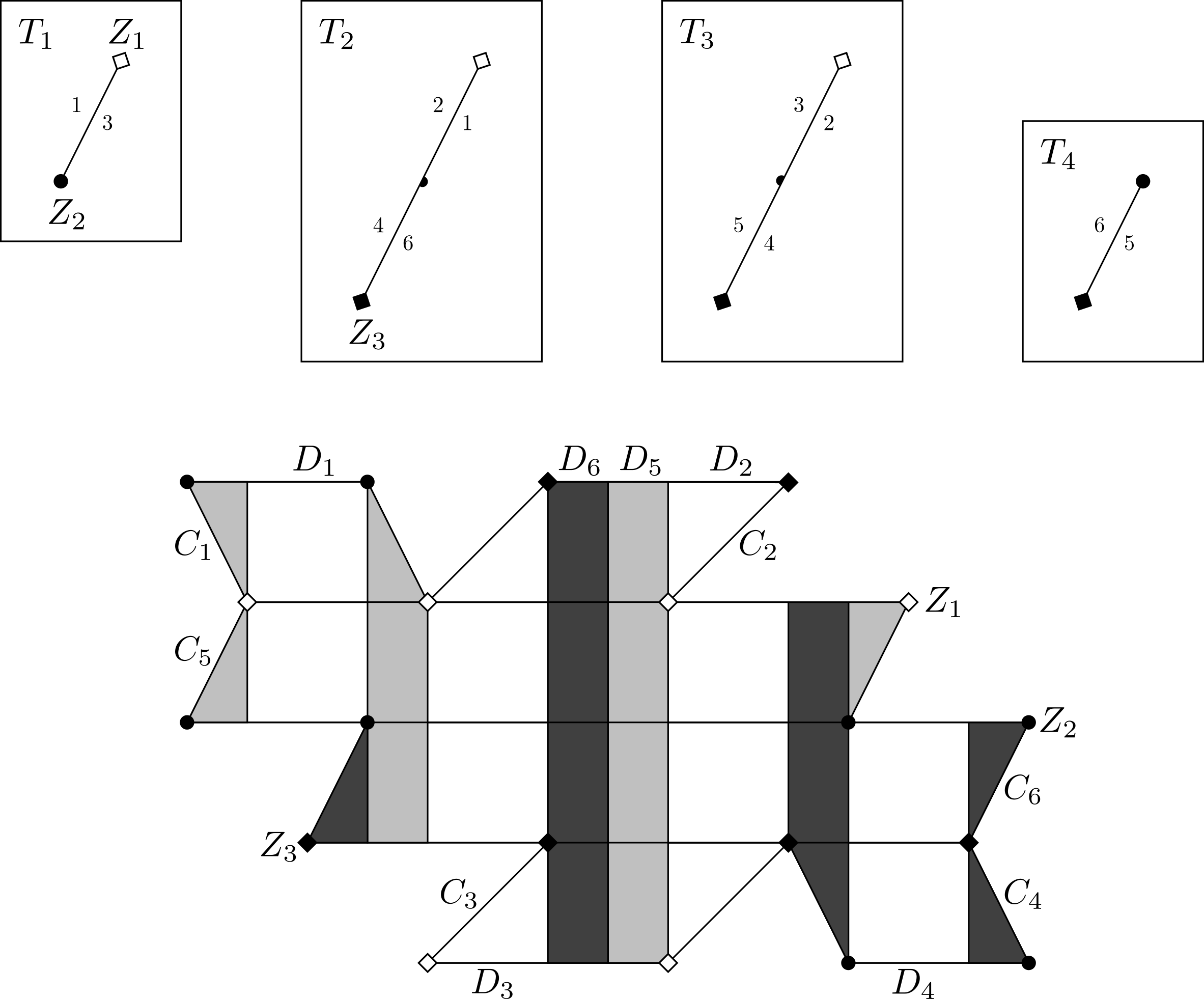}
    \caption{Two presentations of the same $(X,\om) \in \wt{\Om}\cM_4(2,2,2)$ with $\phi(\om) = 0 \hmod 2$ arising from the construction in Case 1.}
    \label{fig:slittori222}
\end{figure}

For $1 \leq j \leq g + n - 1$, let $\al_j \subset C_j$ be a closed geodesic with $\int_{\al_j} \om_0 \in \R_{>0}$, let $\bet_j \subset D_j$ be a closed geodesic with $\int_{\bet_j} \om_0 \in i\R_{>0}$, and let
\be
a_j = [\alpha_j] \in H_1(X_0;\Z), \quad b_j = [\beta_j] \in H_1(X_0;\Z) .
\ee
Clearly, $\{a_j,b_j\}_{j=1}^g$ is a symplectic basis for $H_1(X_0;\Z)$, and
\be
\ind(\al_j) = \ind(\bet_j) = 0 ,
\ee
so in the case where all $k_j$ are even, the parity of the associated spin structure is given by
\be
\phi(\om_0) = g \hmod 2 .
\ee
For $1 \leq j \leq r$, we have
\begin{align*}
a_{g+j} &= a_1 + \dots + a_{k_1 + \cdots + k_j + 1} \\
b_{g+j} &= b_1 + \dots + b_{k_1 + \cdots + k_j + 1}
\end{align*}
and for $r^\pr \leq j \leq m$, we have
\begin{align*}
a_{g+j-1} &= a_{g - (k_j + \cdots + k_n)} + \cdots + a_g \\
b_{g+j-1} &= b_{g - (k_j + \cdots + k_n)} + \cdots + b_g
\end{align*}
by considering algebraic intersection numbers with $a_k,b_k$, $1 \leq k \leq g$. In particular, the homology classes $a_1,\dots,a_{g+n-1}$ and $b_1,\dots,b_{g+n-1}$ are distinct. We have verified that $(X_0,\om_0)$ satisfies condition (3), and we have verified that $(X_0,-i\om_0)$ satisfies condition (3).

Lastly, suppose that $g \geq 3$ and $\kap = (g-1,g-1)$, and suppose that the image of $(X_0,\om_0)$ in $\Om\cM_g(g-1,g-1)$ lies in the hyperelliptic component. Then the hyperelliptic involution $\tau$ preserves each horizontal cylinder on $(X_0,\om_0)$, and so preserves $T_j$ for all $j$. Since $g \geq 3$, each $T_j$ shares a unique saddle connection with $T_{j-1}$ and a unique saddle connection with $T_{j+1}$, where indices are taken modulo $g$. Each of these saddle connections must be preserved by $\tau$. However, this is impossible since $\tau^\ast \om_0 = -\om_0$.

Letting $(X,\omega) = (X_0,\omega_0)$, we are done with Case 1. Figure \ref{fig:slittori11} shows an example in the simplest case $\kap = (1,1)$, and Figure \ref{fig:slittori222} shows an example in the case $\kap = (2,2,2)$, with the presentation as a connected sum of tori as well as the horizontal and vertical cylinder decompositions. Figure \ref{fig:slittori211} shows an additional example in the case $\kap = (2,1,1)$. \\

\begin{figure}
    \centering
    \includegraphics[width=0.6\textwidth]{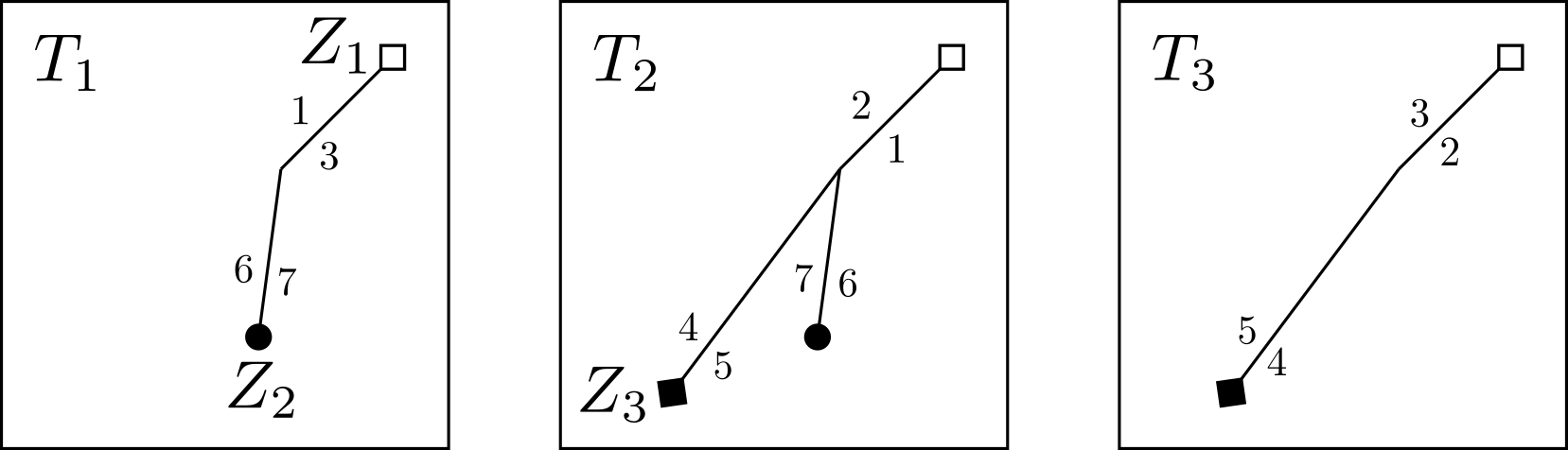}
    \caption{An example $(X,\om) \in \wt{\Om}\cM_3(2,1,1)$ arising from the construction in Case 1. The starting points of the slits (unmarked) are not zeros.}
    \label{fig:slittori211}
\end{figure}

\paragraph{\bf Case 2:} We make the following assumptions.
\begin{itemize}
    \item The components of $v$ are distinct.
    \item For all $j$, we have $k_j \leq g - 1$ and $k_j$ is even.
    \item The associated parity of spin structure is $g + 1 \hmod 2$.
    \item The image of $\wt{\cC}$ in $\Om\cM_g(\kap)$ is a nonhyperelliptic component.
\end{itemize}
By Theorem \ref{thm:KZ}, these assumptions imply $g \geq 4$. Consider a holomorphic $1$-form $(X_0,\om_0) \in \wt{\Om}\cM_g(\kap)$ from Case 1 as in (\ref{eq:X0w0}), and keep notation from Case 1. Suppose additionally that
\be
\int_{\al_1} \om_0 = \int_{\al_2} \om_0 .
\ee
Cut $X_0$ along $\al_1 \cup \al_2$, glue the top side of $\al_1$ to the bottom side of $\al_2$ to obtain a closed geodesic $\al_1^\pr$, and glue the top side of $\al_2$ to the bottom side of $\al_1$ to obtain a closed geodesic $\al_2^\pr$. Let
\be
(X_1,\om_1) \in \wt{\Om}\cM_g(\kap)
\ee
be the resulting holomorphic $1$-form. For $3 \leq j \leq g + n - 1$, the horizontal cylinder $C_j$ and the closed geodesic $\al_j$ are preserved. For $1 \leq j \leq 2$, let $C_j^\pr \subset X_1$ be the horizontal cylinder containing $\al_j^\pr$. The horizontal foliation of $(X_1,\om_1)$ is periodic, and the horizontal cylinders are $C_1^\pr,C_2^\pr,C_3,\dots,C_{g+n-1}$. For $3 \leq j \leq g$, the vertical cylinder $D_j$ and the closed geodesic $\bet_j$ are preserved. Since $k_{r^\pr} \geq 2$, for $r^\pr + 1 \leq j \leq n$, the vertical cylinder $D_{g+j-1}$ and the closed geodesic $\bet_{g+j-1}$ are also preserved. Let $T_1^\pr \subset X_1$ be the region bounded by $\al_1^\pr$ from above, by $\al_2^\pr$ from below, and by the slits from $T_1 \subset X_0$. Let $T_2^\pr \subset X_1$ be the region bounded by $\al_2^\pr$ from above, by $\al_1^\pr$ from below, and by the slits from $T_2 \subset X_0$. Let $\bet_1^\pr \subset X_1$ be a smooth oriented closed loop going upward from the top side of $\al_1^\pr$ in $T_2^\pr$, then crossing the slit bordering $T_1^\pr$ and $T_2^\pr$ from right to left, then rotating counterclockwise around the slits in $T_1^\pr$, and then going upward to the bottom side of $\al_1^\pr$ and closing up. Let $\bet_2^\pr \subset X_1$ be a smooth oriented closed loop disjoint from $\bet_1^\pr$ going upward from the top side of $\al_2^\pr$, then crossing the slit bordering $T_1^\pr$ and $T_2^\pr$ from left to right, and then going upward to the bottom side of $\al_2^\pr$ to close up. Figure \ref{fig:spin} shows the configuration of $\al_1^\pr,\bet_1^\pr,\al_2^\pr,\bet_2^\pr$ in $(X_1,\om_1)$.

\begin{figure}
    \centering
    \includegraphics[width=0.7\textwidth]{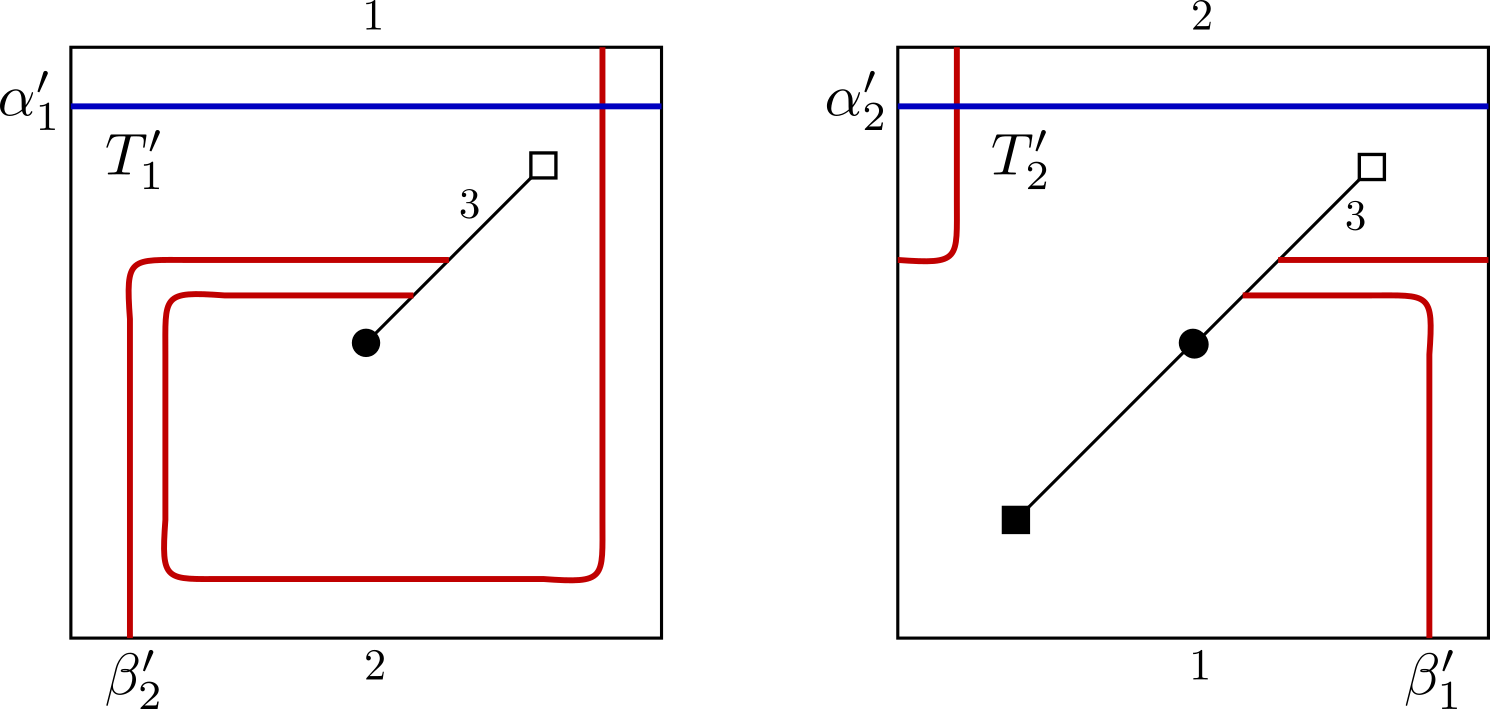}
    \caption{The configuration of the loops $\al_1^\pr,\bet_1^\pr,\al_2^\pr,\bet_2^\pr$ in $(X_1,\om_1)$ in Case 2.}
    \label{fig:spin}
\end{figure}

For $1 \leq j \leq 2$, let
\be
a_j^\pr = [\al_j^\pr] \in H_1(X_1;\Z), \quad b_j^\pr = [\bet_j^\pr] \in H_1(X_1;\Z) .
\ee
For $3 \leq j \leq g$, we have that $\bet_1^\pr \cup \bet_2^\pr$ is disjoint from $\al_j \cup \bet_j$. Moreover, $\bet_1^\pr$ is disjoint from $\al_2^\pr$ and intersects $\al_1^\pr$ once with $a_1^\pr \cdot b_1^\pr = 1$, and $\bet_2^\pr$ is disjoint from $\al_1^\pr$ and intersects $\al_2^\pr$ once with $a_2^\pr \cdot b_2^\pr = 1$. Therefore, 
\be
\{a_1^\pr,b_1^\pr,a_2^\pr,b_2^\pr\} \cup \{a_j,b_j\}_{j=3}^g
\ee
is a symplectic basis for $H_1(X_1;\Z)$. We have
\be
\ind(\al_1^\pr) = 0, \quad \ind(\al_2^\pr) = 0, \quad \ind(\bet_1^\pr) = 1, \quad \ind(\bet_2^\pr) = 0,
\ee
and for $3 \leq j \leq g$ we have $\ind(\al_j) = \ind(\bet_j) = 0$. Therefore, the parity of the associated spin structure is given by
\be
\phi(\om_1) = g + 1 \hmod 2 .
\ee
Since $k_1 \geq 2$, for $1 \leq j \leq r$, we have
\be
a_{g+j} = a_1^\pr + a_2^\pr + a_3 + \cdots + a_{k_1 + \cdots + k_j + 1}
\ee
and since $k_{r^\pr} \geq 2$, for $r^\pr + 1 \leq j \leq n$, we have
\be
a_{g+j-1} = a_{g - (k_j + \cdots + k_n)} + \cdots + a_g .
\ee
If $r^\pr = r + 1$, then $a_{g + r^\pr - 1}$ has already been described. If $r^\pr = r + 2$, then
\be
a_{g + r^\pr - 1} = \begin{cases} a_{g-(k_{r^\pr} + \cdots + k_g)} + \cdots + a_g, \quad \text{ if } k_{r^\pr} + \cdots + k_g < g - 2 , \\ a_1^\pr + a_3 + \cdots + a_g, \quad\quad\quad \; \, \text{ if } k_{r^\pr} + \cdots + k_g = g - 2 . \end{cases}
\ee

We have verified that $(X_1,\om_1)$ satisfies condition (3). For $1 \leq j \leq 2$, the bottom boundary of $C_j^\pr$ consists of a single saddle connection joining $Z_1$ to itself. The top boundary of $C_j^\pr$ also consists of a single saddle connection. If $i_{3-j}^\pr \neq 0$, this saddle connection joins $Z_{i_j^\pr}$ to itself, and otherwise, this saddle connection joins $Z_{r+1}$ to itself. The zeros in the top and bottom boundaries of $C_j^\pr$ are distinct. Thus, since $(X_0,\om_0)$ satisfies conditions (1) and (2), $(X_1,\om_1)$ also satisfies conditions (1) and (2).

Next, we describe the vertical cylinders on $(X_1,\om_1)$. There is a vertical cylinder $D_1^\pr \subset X_1$ passing through $T_1^\pr$ and $T_2^\pr$ and crossing the slit bordering $T_1^\pr$ and $T_2^\pr$. The left boundary of $D_1^\pr$ consists of a single saddle connection joining $Z_{r+1}$ to itself, and the right boundary of $D_1^\pr$ consists of a single saddle connection joining $Z_1$ to itself. There is a vertical cylinder $D_2^\pr \subset X_1$ passing through $T_1^\pr$ and $T_2^\pr$ and disjoint from the slits. The left boundary of $D_2^\pr$ consists of a single saddle connection joining $Z_1$ to itself. The right boundary of $D_2^\pr$ also consists of a single saddle connection. If $r^\pr = r + 1$ or $k_{r^\pr} + \cdots + k_g < g-2$, this saddle connection joins $Z_{r+1}$ to itself, and otherwise, this saddle connection joins $Z_{r^\pr}$ to itself. Thus, since $(X_0,-i\om_0)$ satisfies conditions (1) and (2), $(X_1,-i\om_1)$ also satisfies conditions (1) and (2).

For $1 \leq j \leq 2$, let $\bet_j^{\pr\pr} \subset D_j^\pr$
be a closed geodesic with $\int_{\bet_j^{\pr\pr}} \om_1 \in i\R_{>0}$. Letting $b_j^{\pr\pr} = [\bet_j^{\pr\pr}]$, we have
\be
b_1^{\pr\pr} = a_1^\pr - a_2^\pr + b_1^\pr, \quad b_2^{\pr\pr} = -a_2^\pr + b_1^\pr + b_2^\pr .
\ee
For $1 \leq j \leq r$, there is a vertical cylinder $D_{g+j}^\pr \subset X_1$ passing through $\bet_1^\pr$ and $\al_2^\pr$, and passing through $T_k$ if and only if $3 \leq k \leq k_1+\cdots+k_j+1$. Let $\bet_{g+j}^\pr \subset D_{g+j}^\pr$ be a closed geodesic with $\int_{\bet_{g+j}^\pr} \om_1 \in i\R_{>0}$. Let $b_{g+j}^\pr = [\bet_{g+j}^\pr]$. Since $k_1 \geq 2$, we have
\be
b_{g+j}^\pr = -a_1^\pr + b_2^\pr + b_3 + \cdots + b_{k_1+\cdots+k_j+1} .
\ee
Since $k_{r^\pr} \geq 2$, for $r^\pr + 1 \leq j \leq n$, we have
\be
b_{g+j-1} = b_{g - (k_j + \cdots + k_n)} + \cdots + b_g .
\ee
If $r^\pr = r + 1$, then we have addressed all of the vertical cylinders. If $r^\pr = r + 2$ and $k_{r^\pr} + \cdots + k_g < g - 2$, then the vertical cylinder $D_{g+r^\pr-1}$ and the closed geodesic $\beta_{g+r^\pr-1}$ are preserved, and we have
\be
b_{g+r+1} = b_{g - (k_{r^\pr} + \cdots + k_n)} + \cdots + b_g .
\ee
If $r^\pr = r + 2$ and $k_{r^\pr} + \cdots + k_g = g - 2$, then there is a vertical cylinder $D_{g+r^\pr-1}^\pr \subset X_1$ passing through $\al_1^\pr,\al_2^\pr,\bet_2^\pr,T_3,\dots,T_g$. Let $\bet_{g+r^\pr-1}^\pr \subset D_{g+r^\pr-1}^\pr$ be a closed geodesic with $\int_{\bet_{g+r^\pr-1}^\pr} \om_1 \in i\R_{>0}$. Letting $b_{g+r^\pr-1}^\pr = [\bet_{g+r^\pr-1}^\pr]$, we have
\be
b_{g+r^\pr-1}^\pr = -a_2^\pr + b_1^\pr + b_2^\pr + b_3 + \cdots + b_g .
\ee
We have verified that $(X_1,-i\om_1)$ satisfies condition (3).

Lastly, when $g \geq 5$ is odd and $\kap = (g-1,g-1)$, a similar argument from Case 1 considering $T_3,\dots,T_g$ shows that $(X_1,\om_1)$ does not admit a hyperelliptic involution.

Letting $(X,\om) = (X_1,\om_1)$, we are done with Case 2. \\

\paragraph{\bf Case 3:} We assume that the image of $\wt{\cC}$ in $\Om\cM_g(\kap)$ is a hyperelliptic component, which implies the following.
\begin{itemize}
    \item We have $\kap = (g-1,g-1)$.
    \item The components of $v$ are distinct.
\end{itemize}
Consider a holomorphic $1$-form $(X_0,\om_0) \in \wt{\Om}\cM_g(g-1,g-1)$ from Case 1 as in (\ref{eq:X0w0}), and keep notation from Case 1. Suppose additionally that for $1 \leq j \leq g$, we have
\be
\int_{\al_j} \om_0 = \int_{\al_{g+1-j}} \om_0 .
\ee
Cut $(X_0,\omega_0)$ along $\al_1 \cup \cdots \cup \al_g$, reglue opposite sides of $\al_j$ and $\al_{g+1-j}$, and let
\be
(X_2,\om_2) \in \wt{\Om}\cM_g(g-1,g-1)
\ee
be the resulting holomorphic $1$-form. There is a holomorphic involution $\tau : X_2 \ra X_2$ with $\tau^\ast \om_2 = -\om_2$ that preserves each horizontal cylinder and exchanges the two zeros of $\om_2$. Each horizontal cylinder contains two fixed points of $\tau$, and there are $g+1$ horizontal cylinders, so $\tau$ has $2g+2$ fixed points and must be a hyperelliptic involution.

For $1 \leq j \leq g$, let $\al_j^\pr$ be the closed geodesic on $(X_2,\om_2)$ arising from the top side of $\al_j$, and let $C_j^\pr$ be the horizontal cylinder containing $\al_j^\pr$. The horizontal cylinder $C_{g+1}$ and the closed geodesic $\al_{g+1}$ are preserved. The horizontal foliation of $(X_2,\om_2)$ is periodic, and the horizontal cylinders on $(X_2,\om_2)$ are $C_1^\pr,\dots,C_g^\pr,C_{g+1}$. Letting $a_j^\pr = [\al_j^\pr]$, $1 \leq j \leq g$, and $a_{g+1} = [\al_{g+1}]$, we have
\be
a_{g+1} = a_1^\pr + \cdots + a_g^\pr .
\ee
For $1 \leq j \leq g$, there is a loop $\ell_j$ which crosses $C_j^\pr$ and $C_{g+1}$ from bottom to top and is disjoint from the other horizontal cylinders. Thus, $(X_2,\om_2)$ satisfies condition (3). For $1 \leq j \leq g$, the top boundary of $C_j^\pr$ consists of a single saddle connection joining $Z_2$ to itself, and the bottom boundary of $C_j^\pr$ consists of a single saddle connection joining $Z_1$ to itself. The top boundary of $C_{g+1}$ consists of $g$ saddle connections joining $Z_1$ to itself, and the bottom boundary of $C_{g+1}$ consists of $g$ saddle connections joining $Z_2$ to itself. Thus, $(X_2,\om_2)$ satisfies conditions (1) and (2).

Next, we describe the vertical cylinders on $(X_2,\om_2)$. For $1 \leq j \leq g$, let $T_j^\pr \subset X_2$ be the region bounded by $\al_j^\pr$ from below, by $\al_{g+1-j}^\pr$ from above, and by the slits coming from $T_j$. For $1 \leq j \leq \lfloor\frac{g}{2}\rfloor$, there is a vertical cylinder $D_j^\pr$ passing through $T_j^\pr$ and $T_{g+1-j}^\pr$ and disjoint from the slits. There is a vertical cylinder $D_{\lfloor\frac{g}{2}\rfloor+1}^\pr$ that passes through $T_{\lfloor\frac{g}{2}\rfloor+1}^\pr$ and is disjoint from the slits if $g$ is odd, and that passes through $T_{\frac{g}{2}}^\pr,T_{\frac{g}{2}+1}^\pr$ and the slit bordering $T_{\frac{g}{2}}^\pr,T_{\frac{g}{2}+1}^\pr$ if $g$ is even. For $1 \leq j \leq \lfloor\frac{g-1}{2}\rfloor$, there is a vertical cylinder $D_{g+1-j}^\pr$ passing through $T_j^\pr,T_{j+1}^\pr,T_{g-j}^\pr,T_{g+1-j}^\pr$, as well as the slit bordering $T_j^\pr,T_{j+1}^\pr$ and the slit bordering $T_{g-j}^\pr,T_{g+1-j}^\pr$. There is a vertical cylinder $D_{g+1}^\pr$ passing through $T_1^\pr,T_g^\pr$, and the slit bordering $T_1^\pr,T_g^\pr$. The vertical foliation of $(X_2,\om_2)$ is periodic, and the vertical cylinders are $D_1^\pr,\dots,D_{g+1}^\pr$. For $1 \leq j \leq g+1$, let $\bet_j^\pr \subset D_j^\pr$ be a closed geodesic with $\int_{\bet_j^\pr} \om_2 \in i\R_{>0}$, and let $b_j^\pr = [\bet_j^\pr]$. The nonzero algebraic intersection numbers $a_j^\pr \cdot b_k^\pr$ are given by
\begin{align*}
a_j^\pr \cdot b_j^\pr = a_{g+1-j}^\pr \cdot b_j^\pr &= 1, \quad 1 \leq j \leq \lfloor\frac{g}{2}\rfloor, \\
a_{\lfloor\frac{g}{2}\rfloor+1}^\pr \cdot b_{\lfloor\frac{g}{2}\rfloor+1}^\pr &= 1, \\
a_{j+1}^\pr \cdot b_{g+1-j}^\pr = a_{g+1-j}^\pr \cdot b_{g+1-j}^\pr &= 1, \quad 1 \leq j \leq \lfloor\frac{g-1}{2}\rfloor, \\
a_1^\pr \cdot b_{g+1}^\pr &= 1 .
\end{align*}
We have verified that $(X_2,-i\om_2)$ satisfies condition (3). Moreover, we note that the span of $b_1^\pr,\dots,b_g^\pr$ in $H_1(X_2;\Z)$ contains homology classes $b_1^{\pr\pr},\dots,b_g^{\pr\pr}$ such that $\{a_j^\pr,b_j^{\pr\pr}\}_{j=1}^g$ is a symplectic basis for $H_1(X_2;\Z)$. For $1 \leq j \leq \lfloor\frac{g}{2}\rfloor$, the left boundary of $D_j^\pr$ consists of a single saddle connection joining $Z_1$ to itself, and the right boundary of $D_j^\pr$ consists of a single saddle connection joining $Z_2$ to itself. For $\lfloor\frac{g}{2}\rfloor+1 \leq j \leq g+1$, the left boundary of $D_j^\pr$ consists of a single saddle connection joining $Z_2$ to itself, and the right boundary of $D_j^\pr$ consists of a single saddle connection joining $Z_1$ to itself. Thus, $(X_2,-i\om_2)$ satisfies conditions (1) and (2).

Letting $(X,\om) = (X_2,\om_2)$, we are done with Case 3. See Figure \ref{fig:slittorihyp} for an example in the case $\kap = (2,2)$. \\

\begin{figure}
    \centering
    \includegraphics[width=0.5\textwidth]{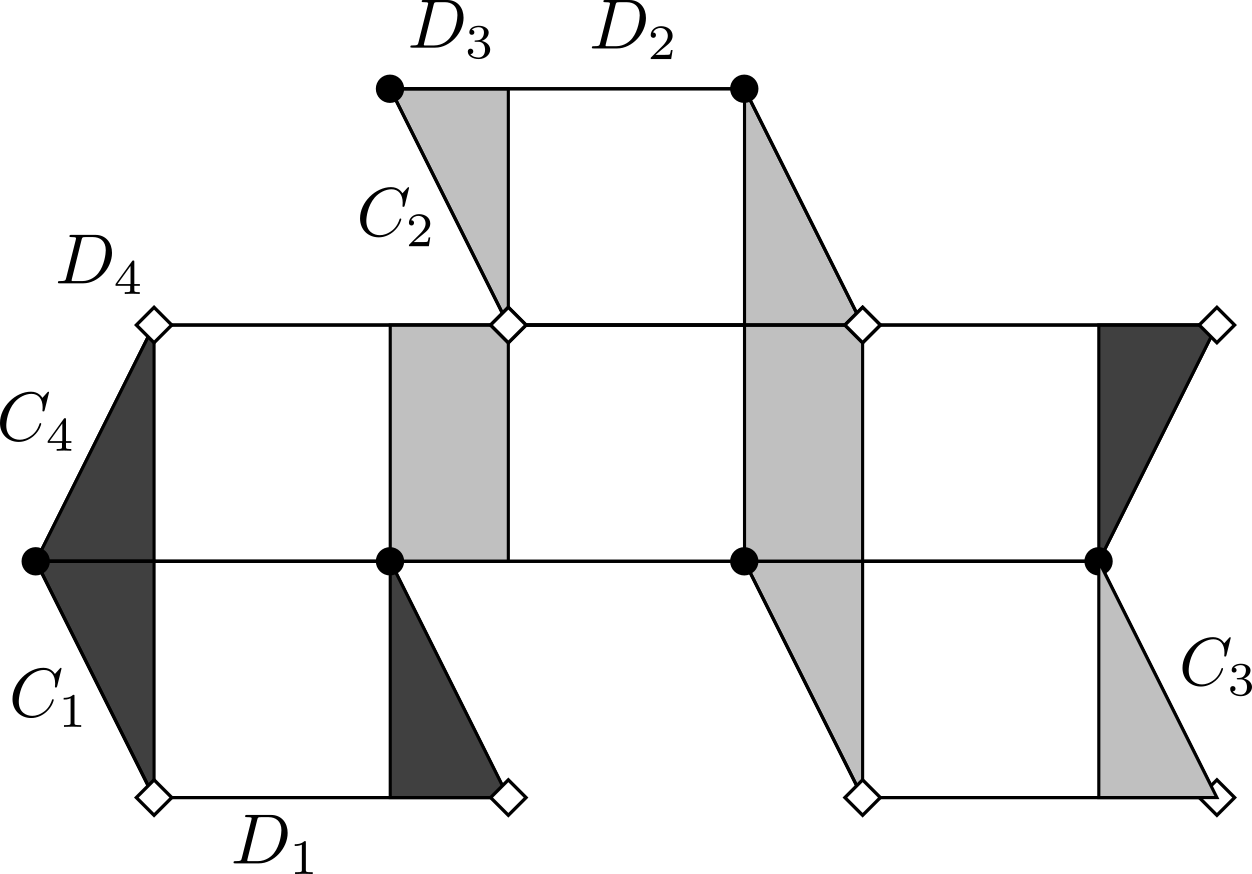}
    \caption{An example of $(X,\om)$ in the hyperelliptic component of $\Om\cM_3(2,2)$ arising from the construction in Case 3.}
    \label{fig:slittorihyp}
\end{figure}

\paragraph{\bf Case 4:} We make the following assumptions.
\begin{itemize}
    \item The components of $v$ are distinct.
    \item For some $j$, we have $k_j > g - 1$.
\end{itemize}
By relabelling zeros, we may assume that $k_n > g - 1$. Let $k_n^\pr = \sum_{j=1}^{n-1} k_j$. Then $\kap^\pr = (k_1,\dots,k_{n-1},k_n^\pr)$ is a partition of $2(g-a) - 2$ for some $0 < a < g - 1$, and we have $k_n = k_n^\pr + 2a$. By Cases 1, 2, and 3, there is $(Y,\eta) \in \wt{\Om}\cM_{g-a}(\kap^\pr)$ satisfying conditions (1), (2), and (3) in Theorem \ref{thm:constr}. Additionally, if the orders of the zeros are all even, we may assume that $\phi(\eta) = 0$ or $1$ according to whether the image of $\wt{\cC}$ in $\Om\cM_g(\kap)$ is an even component or an odd component. We apply the following surgery to $(Y,\eta)$.

Let $\gam$ be a horizontal segment starting at $Z_n$ and shorter than any saddle connection on $(Y,\eta)$. Slit along $\gam$, and let $\gam^+,\gam^-$ be the top and bottom sides of the slit, respectively. Divide $\gam^+$ into $2$ segments, with the left segment having length $\ell_1$ and the right segment having length $\ell_2$. Divide $\gam^-$ into $2$ segments, with the left segment having length $\ell_2$ and the right segment having length $\ell_1$. Identify pairs of segments of equal length, and let $(Y_1,\eta_1)$ be the resulting holomorphic $1$-form. On $(Y_1,\eta_1)$, the order of $Z_n$ is $k_n^\pr + 2$. The orders of the other zeros are unchanged, and no new zeros are created. No new horizontal cylinders are created, and the only horizontal saddle connections that are created are loops through $Z_n$. Since $(Y,\eta)$ satisfies conditions (1), (2), and (3), $(Y^\pr,\eta^\pr)$ also satisfies conditions (1), (2), and (3). By iterating this surgery $a$ times, we obtain a holomorphic $1$-form
\be
(X,\om) \in \wt{\Om}\cM_g(\kap)
\ee
satisfying conditions (1), (2), and (3) in Theorem \ref{thm:constr}. If some zero order is odd, then we are done, so suppose that all of the orders of the zeros are even. The above surgery can be alternatively described by slitting along $\gam$, gluing in a horizontal cylinder that does not contain a vertical saddle connection, and then vertically shrinking the cylinder to height $0$. The first step is an instance of bubbling a handle from Section 4 of \cite{KZ:components}. Since one of the endpoints of $\gam$ is not a zero, by Lemma 11 in \cite{KZ:components}, the above surgery does not change the parity of the associated spin structure. Thus, we are done with Case 4. \\

\paragraph{\bf Case 5:} We assume that the components of $v$ are not distinct.

The components of $v$ determine a partition $P_v = \{P_1,\dots,P_{n^\pr}\}$ of the set $\{1,\dots,n\}$, where $i$ and $j$ are in the same part of $P_v$ if and only if $v_i - v_j = 0$. Letting $\ell_j = \sum_{r \in P_j} k_r$, we obtain a partition $\lam = (\ell_1,\dots,\ell_{n^\pr})$ of $2g-2$ with $n^\pr > 1$ that is refined by $\kap$. Choose a representative $w \in \R^n$ of $v$, and let $v^\pr \in \R^{n^\pr}_0$ be represented by the vector $w^\pr \in \R^{n^\pr}$ such that for $1 \leq j \leq n^\pr$ and $r \in P_j$, $w^\pr_j = w_r$. Note that the components of $v^\pr$ are distinct. By Cases 1, 2, 3, and 4, there is $(Y,\eta) \in \wt{\Om}\cM_g(\lam)$ with zeros $Z_1^\pr,\dots,Z_{n^\pr}^\pr$ of orders $\ell_1,\dots,\ell_{n^\pr}$ and satisfying conditions (1), (2), and (3) of Theorem \ref{thm:constr}. If $k_j$ is even for all $j$, then $\ell_j$ is even for all $j$, and we may additionally assume that $\phi(\eta) = 0$ or $1$ according to whether the image of $\wt{\cC}$ in $\Om\cM_g(\kap)$ is an even component or an odd component. We apply the following surgery to $(Y,\eta)$.

For each $1 \leq j \leq n^\pr$, write $P_j = \{k_{j,1},\dots,k_{j,n_j}\}$. Apply the slit construction from Section \ref{sec:background} iteratively, using short horizontal segments on $(Y,\eta)$, in order to split the zero $Z_j^\pr$ of order $\ell_j$ into $n_j$ zeros of orders $k_{j,1},\dots,k_{j,n_j}$. After relabelling the resulting zeros, we obtain a holomorphic $1$-form $(X,\om) \in \wt{\Om}\cM_g(\kap)$. In each use of the slit construction above, we only modify a holomorphic $1$-form on a contractible neighborhood of a zero. Thus, in the case where every $k_j$ is even, we do not change the parity of the associated spin structure. Therefore, $(X,\om) \in \wt{\cC}$.

By construction, since $(Y,\eta)$ satisfies conditions (1) and (2), $(X,\om)$ also satisfies conditions (1) and (2). No new horizontal cylinders are created during the surgery, so since $(Y,\eta)$ satisfies condition (3), $(X,\om)$ also satisfies condition (3). We are done with Case 5.
\end{proof}

The criteria in Theorems \ref{thm:densecriterion} and \ref{thm:percriterion} can be readily verified for many other $\SL(2,\R)$-orbit closures in strata. As an example, we briefly discuss the existence of dense orbits of real Rel flows on nonarithmetic eigenform loci in $\wt{\Om}_1\cM_2(1,1)$. See \cite{BSW:horocycle} and \cite{McM:SL2R} for definitions and detailed discussions. Let $D > 0$ be a non-square integer such that $D \equiv 0, 1 \hmod 4$. Let
\be
\cE_D \subset \wt{\Om}_1\cM_2(1,1)
\ee
be the area-$1$ locus of eigenforms for real multiplication by the real quadratic order of discriminant $D$. There is $(X,\om) \in \cE_D$ with a periodic horizontal foliation and $3$ horizontal cylinders, whose circumferences have irrational ratios. Each horizontal saddle connection on $(X,\om)$ is a loop. Letting $v = (1,-1) \in \R^2_0$, $\Rel_{\R v}(X,\om)$ is well-defined, and its closure is given by
\be
\{u_s {\rm Rel}_{tv} (X,\om) \; : \; s, t \in \R \} .
\ee
By replacing $(X,\om)$ with $\Rel_{t v}(X,\om)$ for some $t \in \R$, we can ensure that the $\SL(2,\R)$-orbit of $(X,\om)$ is dense in $\cE_D$. Theorem \ref{thm:densecriterion} then implies the following.

\begin{thm} \label{thm:eigenform}
For any non-square integer $D > 0$ such that $D \equiv 0, 1 \hmod 4$, there exists $(Y,\eta) \in \cE_D$ such that $\Rel_{\R v}(Y,\eta)$ is dense in $\cE_D$.
\end{thm}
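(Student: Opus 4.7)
The plan is to verify the hypotheses of Theorem \ref{thm:densecriterion} for the $\SL(2,\R)$-orbit closure $\cM = \cE_D$, following the outline given in the paragraph above the theorem statement. Three pieces of data need to be put in place: an explicit $(X,\om) \in \cE_D$ whose real Rel flow orbit has the horocycle in its closure, a verification that $\Rel_{\R v}(X,\om) \subset \cE_D$, and a choice of base point whose $\SL(2,\R)$-orbit is dense in $\cE_D$.

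First I would construct $(X,\om) \in \cE_D$ with a periodic horizontal foliation consisting of three horizontal cylinders $C_1,C_2,C_3$ whose circumferences $w_1,w_2,w_3 \in \Q(\sqrt{D})$ have pairwise irrational ratios, and such that every horizontal saddle connection is a loop. Such prototypes are classical in the study of genus-$2$ eigenform loci and are produced by Thurston--Veech style constructions, as in the theory developed in \cite{McM:SL2R} and \cite{BSW:horocycle}. Since $v = (1,-1)$ and every horizontal saddle connection begins and ends at the same labelled zero, the relation $v_i - v_j = 0$ holds at the endpoints of each horizontal saddle connection, so by Theorem \ref{thm:domain} the orbit $\Rel_{\R v}(X,\om)$ is defined for all time; since real Rel flows preserve absolute periods, they preserve the real multiplication defining $\cE_D$, so $\Rel_{\R v}(X,\om) \subset \cE_D$. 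Using the twist-torus formalism of Section \ref{sec:twist}, both the horocycle flow and the real Rel flow act on the parameter torus $\R^3/\Z^3$ by linear flows, in the directions $(h_1/w_1,h_2/w_2,h_3/w_3)$ and a nonzero rational multiple of $(1/w_1,1/w_2,1/w_3)$ respectively. By Lemma \ref{lem:realReltwist}, the closure of $\Rel_{\R v}(X,\om)$ equals $\rho_\om(T)$ for a subtorus $T \subset \R^3/\Z^3$, and one then verifies, using that all relevant periods lie in the two-dimensional $\Q$-vector space $\Q(\sqrt{D})$ together with the eigenform relations among the cylinder moduli and circumferences, that $T$ is two-dimensional and contains the horocycle direction. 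The closure of $\Rel_{\R v}(X,\om)$ is therefore $\{u_s \Rel_{tv}(X,\om) : s,t \in \R\}$ and in particular contains the horocycle through $(X,\om)$.

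Finally, to ensure the $\SL(2,\R)$-orbit closure of the base point is all of $\cE_D$, I would replace $(X,\om)$ by $\Rel_{tv}(X,\om)$ for generic $t \in \R$: by \cite{EMM:closures}, the set of points in $\cE_D$ contained in a proper affine invariant submanifold is a countable union of such submanifolds, while the Rel curve $\Rel_{\R v}(X,\om)$ moves in relative periods and so is not contained in any Teichm\"uller curve. For such a generic base point the horizontal cylinder structure and the loop property are preserved, hence the closure-contains-horocycle property from the previous paragraph still holds, and Theorem \ref{thm:densecriterion} produces the desired $(Y,\eta) \in \cE_D$ with $\Rel_{\R v}(Y,\eta)$ dense in $\cE_D$. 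The main obstacle is the computation in the middle paragraph: one must use the eigenform structure, i.e.\ real multiplication acting on absolute periods, to force the horocycle direction into the same two-dimensional subtorus as the Rel closure, rather than merely a common higher-dimensional one.
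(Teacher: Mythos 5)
Your proposal follows essentially the same route as the paper: exhibit a three–cylinder horizontally periodic eigenform in $\cE_D$ with all horizontal saddle connections being loops and irrational circumference ratios, use the twist-torus description of Section \ref{sec:twist} to show the closure of $\Rel_{\R v}(X,\om)$ contains the horocycle, perturb along $\Rel_{\R v}$ to a base point with dense $\SL(2,\R)$-orbit in $\cE_D$, and then invoke Theorem \ref{thm:densecriterion}. The extra remarks you supply (the well-definedness via Theorem \ref{thm:domain}, preservation of $\cE_D$ since Rel fixes absolute periods, and the countability/Teichm\"uller-curve argument for genericity of the translated base point) are precisely the implicit steps the paper is compressing, and you correctly flag the closure computation using the eigenform relations as the one place where real work is needed.
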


Lastly, we use the constructions in our proof of Theorem \ref{thm:constr} to prove Theorem \ref{thm:denseV}. We refer to Figure \ref{fig:slittori11} and the discussion preceding the proof of Theorem \ref{thm:constr} for the simplest case where $\kap = (1,1)$.

\begin{thm} \label{thm:perdense}
Suppose $n > 1$, fix a nonzero $v \in \R^n_0$ with distinct components, and let $V = \C v$. Let $\wt{\cC}_1$ be a connected component of $\wt{\Om}_1\cM_g(\kap)$. There is an explicit $(X,\om) \in \wt{\cC}_1$ with periodic horizontal and vertical foliations, such that the leaf of $\cA_V(\kap)$ through $(X,\om)$ is dense in $\wt{\cC}_1$.
\end{thm}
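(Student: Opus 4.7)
The plan is to use the holomorphic $1$-form $(X_0,\om_0)$ constructed in the proof of Theorem \ref{thm:constr}. Since $v$ has distinct components, we are in Cases 1 through 4 of that proof, and in each of those cases the constructed $(X_0,\om_0) \in \wt{\cC}$ was shown to have a periodic vertical foliation whose cylinders also satisfy the analogues of conditions (1), (2), and (3) of Theorem \ref{thm:constr} with respect to the same $v$. Thus the same surface serves simultaneously as a witness for $\om$ and for $-i\om$. The construction depends on free parameters (heights and widths of the tori $T_j$, and the slit data $z_j$, subject only to \eqref{eq:smallx}--\eqref{eq:slope}), so one can vary them within a nonempty open family while preserving both separatrix diagrams.

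Applying Lemmas \ref{lem:QRindep} and \ref{lem:recip} to the horizontal diagram and to the vertical diagram simultaneously, and avoiding the resulting countable union of positive-codimension bad loci, I would obtain after rescaling to area $1$ an explicit $(X,\om) \in \wt{\cC}_1$ with the following properties: the $\SL(2,\R)$-orbit of $(X,\om)$ is dense in $\wt{\cC}_1$ by Theorem \ref{thm:Wdense}, the closure of $\Rel_{\R v}(X,\om)$ contains $u_\R(X,\om)$, and the closure of $\Rel_{i\R v}(X,\om)$ contains $v_\R(X,\om)$, both by Corollary \ref{cor:realRelhoro} applied to $(X,\om)$ and to $(X,-i\om)$ respectively.

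Let $L$ be the leaf of $\cA_V(\kap)$ through $(X,\om)$. Since $V = \C v$, $L$ contains both $\Rel_{\R v}(X,\om)$ and $\Rel_{i\R v}(X,\om)$, so $\ol{L}$ contains the two horocycles $u_\R(X,\om)$ and $v_\R(X,\om)$. The main step is to deduce $\SL(2,\R)$-invariance of $\ol{L}$. Because $\GL^+(2,\R)$ permutes the leaves of $\cA_V(\kap)$ when $V = \C v$, the leaf through $u_s(X,\om)$ is $u_s L$; any point of $u_s L$ can be written as $\Rel_{zv} u_s(X,\om)$ for some $z \in \C$ (for generic parameters the complex Rel flow in direction $V$ is defined globally on the leaf). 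Choosing $\tau_m$ with $\Rel_{\tau_m v}(X,\om) \to u_s(X,\om)$ and invoking continuity of the Rel flow yields $\Rel_{(z+\tau_m)v}(X,\om) \to \Rel_{zv} u_s(X,\om)$ with every term on the left in $L$, so $u_s L \subset \ol{L}$. The argument for $v_s L \subset \ol{L}$ is formally identical, using a sequence in $\Rel_{i\R v}(X,\om)$ converging to $v_s(X,\om)$. Since $u_s$ and $v_s$ generate $\SL(2,\R)$, $\ol{L}$ is $\SL(2,\R)$-invariant, and combining with density of the $\SL(2,\R)$-orbit of $(X,\om)$ forces $\ol{L} \supset \wt{\cC}_1$.

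The main obstacle is ensuring that the construction in Theorem \ref{thm:constr} is symmetric in the horizontal and vertical directions. In Cases 1, 2, and 3 this is transparent from the explicit connected-sum-of-tori construction (compare Figures \ref{fig:slittori11} and \ref{fig:slittori222}), where the vertical cylinder combinatorics and the homology classes of vertical core curves can be recorded in parallel with the horizontal ones. In Case 4, one must additionally arrange the handle-bubbling surgery so as not to destroy periodicity of the vertical foliation or create vertical cylinders that violate the non-homology condition; this requires care but is forced by the local structure of the surgery. Once these constructions are verified, the $\SL(2,\R)$-invariance step is a clean consequence of the fact that $u_\R(X,\om)$ and $v_\R(X,\om)$ can each be approached from within $L$ by a real Rel flow orbit in the appropriate direction.
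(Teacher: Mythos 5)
Your approach matches the paper for the ``small zero order'' cases (the analogues of Cases 1, 2, 3 of the proof of Theorem~\ref{thm:constr}): there the construction is manifestly symmetric in the horizontal and vertical directions, both $(X,\om)$ and $(X,-i\om)$ satisfy conditions (1)--(3), and a simultaneous application of Lemmas~\ref{lem:QRindep} and~\ref{lem:recip} followed by Corollary~\ref{cor:realRelhoro} for both foliations, plus Theorem~\ref{thm:Wdense}, gives $\SL(2,\R)$-invariance of~$\ol{L}$. That part of the paper's proof is what you describe.

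The gap is in what you call Case~4, when some $k_j > g-1$. You write that the handle-bubbling surgery ``requires care but is forced by the local structure of the surgery'' to give a form satisfying conditions (1)--(3) for both foliations. This cannot be arranged: any local surgery that increases the order of a single zero $Z_n$ by~$2$ while keeping the horizontal (or vertical) foliation periodic necessarily introduces a cylinder whose boundary meets only $Z_n$, and such a cylinder violates condition~(2). (The handle-bubbling of Case~4 of Theorem~\ref{thm:constr}, being the vertical collapse of an added horizontal cylinder with no vertical saddle connection, also does not by itself preserve periodicity of the vertical foliation.) The paper handles this case by a fundamentally different argument. It attaches small flat tori $T_1^\pr,\dots,T_a^\pr$ to $T_{g-a}$ by slits; this preserves both periodic foliations, but the new cylinders $C_j^\pr, D_j^\pr$ have only $Z_n$ on their boundaries and are therefore not twisted by the real Rel flows. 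Consequently $\Rel_{\R v}(X,\om)$ alone does not accumulate on $u_\R(X,\om)$, and Corollary~\ref{cor:realRelhoro} does not apply. Instead, the paper tracks a cascade: as $t$ increases through critical values $\eps_1 < \cdots < \eps_a$, the flow $\Rel_{-itv}(X,\om)$ collapses horizontal cylinders one by one, and after each collapse the emerging cylinder \emph{is} twisted by $\Rel_{\R v}$; composing real Rel segments interspersed with imaginary Rel jumps of the form
\be
{\rm Rel}_{t_{j+1}v} {\rm Rel}_{i\eps_j v} {\rm Rel}_{t_j v} {\rm Rel}_{i(\eps_{j-1}-\eps_j)v} \cdots {\rm Rel}_{t_1 v} {\rm Rel}_{-i\eps_1 v} {\rm Rel}_{t_0 v}(X,\om)
\ee
shows $\ol{L}$ contains the full image of the horizontal twist map, hence the horocycle; the vertical cascade is symmetric. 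Your proposal misses this mechanism entirely and cannot be completed as written when some zero order exceeds $g-1$.

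A secondary remark: your derivation of $\SL(2,\R)$-invariance of $\ol{L}$ relies on global definedness of the complex Rel flow on the leaf through $u_s(X,\om)$, which is not automatic since Rel flows collide zeros. This needs more care (or should be replaced by a twist-torus argument as in the paper), though it is a much smaller issue than the Case~4 gap.
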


\begin{proof}
First, suppose there is $(X,\om) \in \wt{\cC}_1$ arising from the construction in Case 1 of the proof of Theorem \ref{thm:constr}, and keep notation from Case 1. Recall that $(X,\om)$ has a periodic horizontal foliation with $g + n - 1$ horizontal cylinders $C_1,\dots,C_{g+n-1}$, and a periodic vertical foliation with $g + n - 1$ vertical cylinders $D_1,\dots,D_{g+n-1}$. The circumference of $C_j$ is $w_j$, and the circumference of $D_j$ is $w_j^\pr$. The circumferences $w_1,\dots,w_g$ and $w_1^\pr,\dots,w_g^\pr$ are parameters in the construction in Case 1, arising from the choice of tori $T_1,\dots,T_g$ with periodic horizontal and vertical foliations. This construction also depended on parameters $z_1,\dots,z_n \in \C$ for the slits joining the tori. Moreover, $w_1,\dots,w_g$, $iw_1^\pr,\dots,iw_g^\pr$, and $z_1-z_{r+1},\dots,z_r-z_{r+1},z_{r+2}-z_{r+1},\dots,z_n-z_{r+1}$ provide period coordinates for $(X,\om)$. The valid choices of parameters yield period coordinates ranging over an open subset of $\R^g \times (i\R)^g \times \C^{n-1}$. As in Section \ref{sec:twist}, choose functions
\be
t,b,\ell,r : \{1,\dots,g+n-1\} \ra \{1,\dots,n\},
\ee
so that $t(j)$ (respectively, $b(j)$) is the index of a zero in the top (respectively, bottom) boundary of $C_j$, and $\ell(j)$ (respectively, $r(j)$) is the index of a zero in the left (respectively, right) boundary of $D_j$. Both $(X,\om)$ and $(X,-i\om)$ satisfy the conditions in Theorem \ref{thm:constr}. Then by the proofs of Lemmas \ref{lem:QRindep} and \ref{lem:recip}, we can choose the parameters $w_1,\dots,w_g$, $w_1^\pr,\dots,w_g^\pr$, and $z_1,\dots,z_n$ so that the following four properties hold.
\begin{itemize}
    \item $(X,\om)$ has area $1$.
    \item $(v_{t(j)} - v_{b(j)})/w_j$, $1 \leq j \leq g + n - 1$, are linearly independent over $\Q$.
    \item $(v_{\ell(j)} - v_{r(j)})/w_j^\pr$, $1 \leq j \leq g + n - 1$, are linearly independent over $\Q$.
    \item The period coordinates of $(X,\om)$ are linearly independent over $\ol{\Q} \cap \R$.
\end{itemize}
Let $L$ be the leaf of $\cA_V(\kap)$ through $(X,\om)$, and recall that the $\SL(2,\R)$-action on $\wt{\cC}_1$ sends leaves of $\cA_V(\kap)$ to leaves of $\cA_V(\kap)$. By Corollary \ref{cor:realRelhoro}, the closure of ${\rm Rel}_{\R v}(X,\om)$ contains the horocycle $u_\R (X,\om)$. Similarly, by Corollary \ref{cor:realRelhoro} applied to $(X,-i\om)$, the closure of ${\rm Rel}_{i\R v}(X,\om)$ contains the opposite horocycle $v_\R (X,\om)$. Since $u_\R$ and $v_\R$ generate $\SL(2,\R)$, it follows that the closure $\ol{L}$ is $\SL(2,\R)$-invariant. By Theorem \ref{thm:Wdense}, the $\SL(2,\R)$-orbit of $(X,\om)$ is dense in $\wt{\cC}_1$. Thus, $L$ is dense in $\wt{\cC}_1$.

The argument when there is $(X,\om) \in \wt{\cC}_1$ arising from Case 2 of the proof of Theorem \ref{thm:constr} is similar. In the notation from Case 2, the horizontal and vertical foliations of $(X,\om)$ are periodic, and there are $g + n - 1$ horizontal cylinders with circumferences $w_1,\dots,w_{g+n-1}$, and $g + n - 1$ vertical cylinders with circumferences $w_1^\pr,\dots,w_{g+n-1}^\pr$. The circumferences $w_3,\dots,w_g$ and $w_3^\pr,\dots,w_g^\pr$ are parameters in the construction, arising from the choice of tori $T_3,\dots,T_g$. There are parameters $z_1,\dots,z_n \in \C$ arising from the slits. The circumferences $w_1^\pr,w_2^\pr$ arise from cylinders passing through $T_1^\pr$ and $T_2^\pr$, and can be freely varied by changing the choices of horizontal closed geodesics in $T_1$ and $T_2$ to cut along. Lastly, in the beginning of the construction, we assumed $w_1 = w_2$. However, $w_1,\dots,w_g$, $iw_1^\pr,\dots,iw_g^\pr$, and $z_1-z_{r+1},\dots,z_r-z_{r+1},z_{r+2}-z_{r+1},\dots,z_n-z_{r+1}$ provide period coordinates for $(X,\om)$, since the associated elements of $H_1(X,Z(\om);\Z)$ are linearly independent, which is shown in Case 2. By perturbing these period coordinates within $\R^g \times (i\R)^g \times \C^{n-1}$, we can perturb $(X,\om)$ while preserving each of the horizontal and vertical cylinders and preserving the property of having periodic horizontal and vertical foliations, so that the four properties from the previous paragraph hold. Then the leaf of $\cA_V(\kap)$ through $(X,\om)$ is dense in $\wt{\cC}_1$. The argument when there is $(X,\om) \in \wt{\cC}_1$ arising from Case 3 of the proof of Theorem \ref{thm:constr} is also similar. The horizontal and vertical foliations of $(X,\om)$ are periodic, and there are $g + 1$ horizontal cylinders with circumferences $w_1,\dots,w_{g+1}$, and $g + 1$ vertical cylinders with circumferences $w_1^\pr,\dots,w_{g+1}^\pr$. There is a parameter $z_1 \in \C$ arising from the slits. As is shown in Case 3, $w_1,\dots,w_g$, $iw_1^\pr,\dots,iw_g^\pr$, and $z_1$ provide period coordinates for $(X,\om)$. Again, we can perturb these period coordinates within $\R^g \times (i\R)^g \times \C$ so that the four properties in the previous paragraph hold. Then the leaf of $\cA_V(\kap)$ through $(X,\om)$ is dense in $\wt{\cC}_1$.

\begin{figure}
    \centering
    \includegraphics[width=0.7\textwidth]{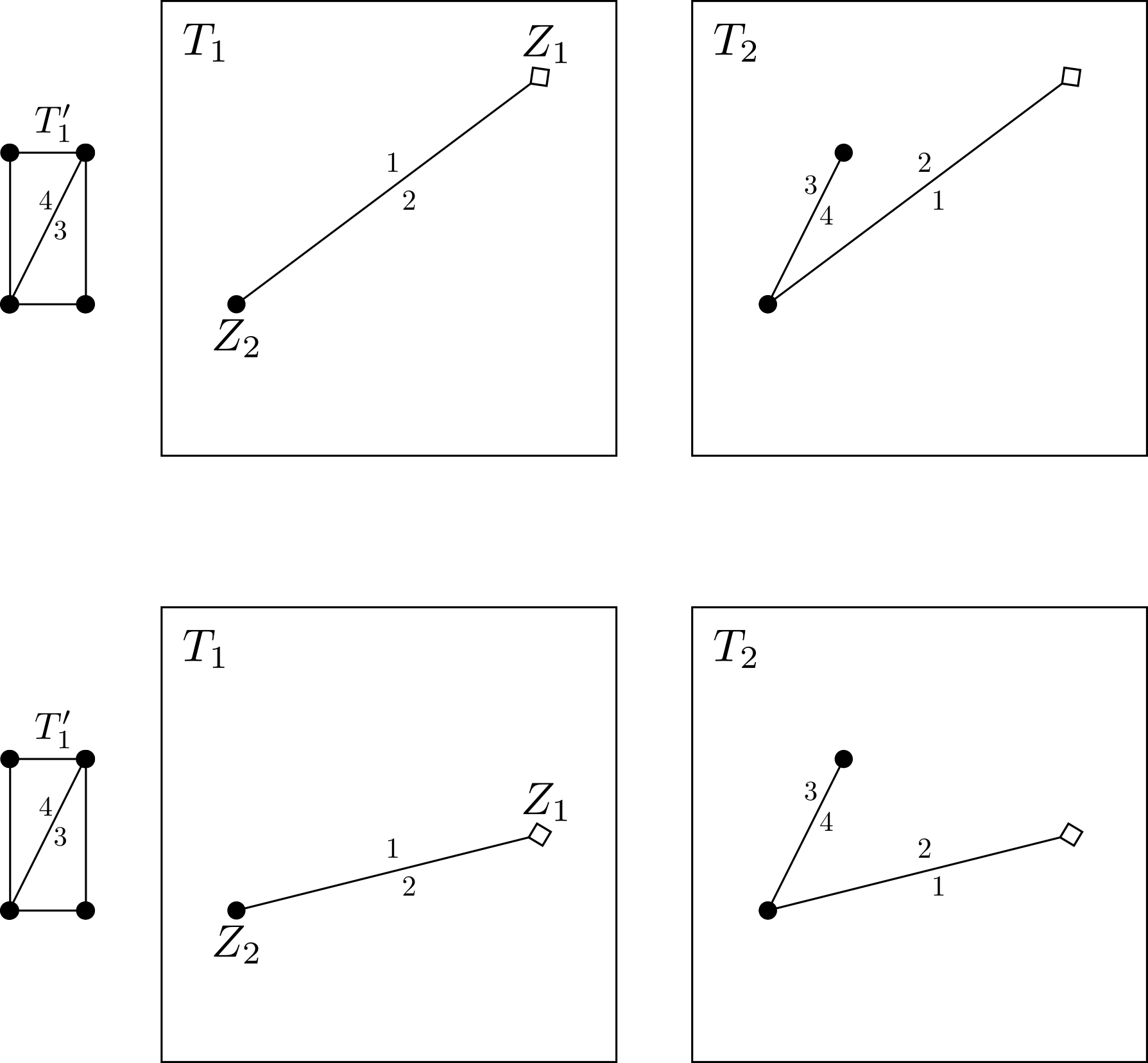}
    \caption{An example of $(X,\om) \in \wt{\Om}\cM_3(1,3)$ (top) and ${\rm Rel}_{-i\eps_1v}(X,\om)$ (bottom) arising from the construction in the proof of Theorem \ref{thm:perdense}.}
    \label{fig:slittori31}
\end{figure}

We may now assume that some $k_j > g - 1$. By relabelling zeros, we may assume that $k_n > g - 1$. Let $k_n^\pr = \sum_{j=1}^{n-1} k_j$. Then $\kap^\pr = (k_1,\dots,k_{n-1},k_n^\pr)$ is a partition of $2(g-a)-2$ for some $0 < a < g - 1$, and $k_n = k_n^\pr + 2a$. Choose $(Y,\eta) \in \wt{\Om}\cM_{g-a}(\kap^\pr)$ arising from the construction in Case 1, 2, or 3 of the proof of Theorem \ref{thm:constr}. If every $k_j$ is even, we additionally require that $\phi(\eta) = 0$ or $1$ according to whether the image of $\wt{\cC}_1$ in $\Om\cM_g(\kap)$ lies in an even component or odd component.

We focus on the case where $(Y,\eta)$ arises from the construction in Case 1, since the analysis in the other cases is similar. Keep notation from Case 1. Recall that $(Y,\eta)$ has $(g-a) + n - 1$ horizontal cylinders $C_1,\dots,C_{(g-a)+n-1}$ with heights $h_1,\dots,h_{(g-a)+n-1}$ and circumferences $w_1,\dots,w_{(g-a)+n-1}$, and that $(Y,\eta)$ has $(g-a) + n - 1$ vertical cylinders $D_1,\dots,D_{(g-a)+n-1}$ with heights $h_1^\pr,\dots,h_{(g-a)+n-1}^\pr$ and circumferences $w_1^\pr,\dots,w_{(g-a)+n-1}^\pr$. For $1 \leq j \leq (g-a) + n - 1$, recall that $v_{t(j)} - v_{b(j)} \neq 0$ and $v_{\ell(j)} - v_{r(j)} \neq 0$, and let
\be
\del_j = \frac{h_j}{|v_{t(j)} - v_{b(j)}|} > 0, \quad \del_j^\pr = \frac{h_j^\pr}{|v_{\ell(j)} - v_{r(j)}|} > 0,
\ee
and let
\be
\eps = \min_j \del_j, \quad \eps^\pr = \min_j \del_j^\pr .
\ee
For $-\eps < t < \eps$, as we travel along ${\rm Rel}_{i\R v}(Y,\eta)$ from $(Y,\eta)$ to ${\rm Rel}_{itv} (Y,\eta)$, the height of $C_j$ changes by $t(v_{t(j)} - v_{b(j)})$, and we have $|t(v_{t(j)} - v_{b(j)})| < h_j$. Therefore, all of the horizontal cylinders on $(Y,\eta)$ persist on ${\rm Rel}_{itv}(Y,\eta)$. Similarly, for $-\eps^\pr < t < \eps^\pr$, all of the vertical cylinders on $(Y,\eta)$ persist on ${\rm Rel}_{tv}(Y,\eta)$. We will apply a variant of the surgery from Case 4 to $(Y,\eta)$, after imposing some additional restrictions on the parameters in the construction of $(Y,\eta)$.

We will additionally assume that $\del_{(g-a)+n-1} < \del_j$ and $\del_{(g-a)+n-1}^\pr < \del_j^\pr$ for all $1 \leq j < (g-a) + n - 1$. Recall that $(Y,\eta)$ is constructed from $g-a$ flat tori $T_1,\dots,T_{g-a}$ by iteratively applying the slit construction from Section \ref{sec:background}. Since $k_n^\pr = \sum_{j=1}^{n-1} k_j = (g - a) - 1$, we have $r = n - 1$ and $r^\pr = n$. In this case, there are $n$ steps in the construction in Case 1. The torus $T_{g-a}$ is only slit in steps $n-1$ and $n$. In step $n-1$, $T_{g-a}$ is slit along the projection of the segment from $0$ to $z_{n-1}$, and in step $n$, $T_{g-a}$ is slit along the projection of the segment from $0$ to $z_n$. There is a unique horizontal cylinder $C_{(g-a)+n-1}$ passing through all $g-a$ tori, and a unique vertical cylinder $D_{(g-a)+n-1}$ passing through all $g-a$ tori. The only other horizontal cylinder passing through $T_{g-a}$ is $C_{g-a}$, and the only other vertical cylinder passing through $T_{g-a}$ is $D_{g-a}$. For $1 \leq j \leq a$, choose $s_j,t_j \in \R$ satisfying
\begin{align}
0 < s_a < \cdots < s_1 < -x_n \label{eq:smalls} \\
0 < t_a < \cdots < t_1 < -y_n \label{eq:smallt} \\
0 < \frac{y_n}{x_n} < \frac{t_1}{s_1} < \cdots < \frac{t_a}{s_a} \label{eq:slopest}
\end{align}
and let $T_j^\pr = (\C/(\Z s_j + i\Z t_j),dz)$. The flat torus $T_j^\pr$ contains a closed geodesic $\gam_j$ satisfying $\int_{\gam_j} dz = s_j + it_j$. Let $\gam_j^\pr$ be the straight segment on $T_{g-a} \subset Y$ emanating from $Z_n$ and satisfying $\int_{\gam_j^\pr} \eta = s_j + it_j$. Slit $Y$ along $\gam_j^\pr$, slit $T_j^\pr$ along $\gam_j$, and reglue opposite sides. The result is a holomorphic $1$-form $(X,\om) \in \wt{\Om}\cM_g(\kap)$. In the case where all $k_j$ are even, the associated parity of spin structure is unchanged by Lemma 11 in \cite{KZ:components}. Every horizontal cylinder and every vertical cylinder on $(Y,\eta)$ persists on $(X,\om)$. The height of $C_{(g-a)+n-1}$ is decreased, and the height of $D_{(g-a)+n-1}$ is decreased, while the heights of the other horizontal cylinders and vertical cylinders are unchanged. Additionally, for $1 \leq j \leq a$, $(X,\om)$ has a horizontal cylinder $C_{(g-a)+n-1+j}$ passing through $T_1,\dots,T_{g-a},T_1^\pr,\dots,T_j^\pr$ with circumference
\be
w_{(g-a)+n-1+j} = w_1 + \cdots + w_{g-a} + s_1 + \cdots + s_j
\ee
and a vertical cylinder $D_{(g-a)+n-1+j}$ passing through $T_1,\dots,T_{g-a},T_1^\pr,\dots,T_j^\pr$ with circumference
\be
w_{(g-a)+n-1+j}^\pr = w_1^\pr + \cdots + w_{g-a}^\pr + t_1 + \cdots + t_j .
\ee
We also denote $C_j^\pr = C_{(g-a)+n-1+j}$ and $D_j^\pr = D_{(g-a)+n-1+j}$.

The holomorphic $1$-forms $(X,\om)$ and $(X,-i\om)$ satisfy conditions (1) and (3) in Theorem \ref{thm:constr}. The cylinders $C_j^\pr$ and $D_j^\pr$ only contain the zero $Z_n$ in their boundary, and thus do not satisfy condition (2). However, $w_1,\dots,w_{g-a},s_1,\dots,s_a$, $iw_1^\pr,\dots,iw_{g-a}^\pr,it_1,\dots,it_a$, and $z_1-z_n,\dots,z_{n-1}-z_n$ provide period coordinates for $(X,\om)$. Again, by the proofs of Lemmas \ref{lem:realReltwist} and \ref{lem:QRindep}, we can perturb these period coordinates within $\R^g \times (i\R)^g \times \C^{n-1}$ so that the following properties hold.
\begin{itemize}
    \item $(X,\om)$ has area $1$.
    \item The collection
    \be
    \left\{ \frac{v_{t(j)} - v_{b(j)}}{w_j} \; : \; 1 \leq j \leq (g-a) + n - 1 \right\} \cup \left\{ \frac{v_{n-1} - v_n}{w_{(g-a)+n-1+j}}, \frac{v_{n-1} - v_n}{w_{g-a} + \sum_{k=1}^j s_k} \; : \; 1 \leq j \leq a \right\}
    \ee
    is linearly independent over $\Q$.
    \item The collection
    \be
    \left\{ \frac{v_{\ell(j)} - v_{r(j)}}{w_j^\pr} \; : \; 1 \leq j \leq (g-a) + n - 1 \right\} \cup \left\{ \frac{v_{n-1} - v_n}{w_{(g-a)+n-1+j}^\pr}, \frac{v_{n-1} - v_n}{w_{g-a}^\pr + \sum_{k=1}^j t_k} \; : \; 1 \leq j \leq a \right\}
    \ee
    is linearly independent over $\Q$.
    \item The period coordinates of $(X,\om)$ are linearly independent over $\ol{\Q} \cap \R$.
\end{itemize}

Replacing $v$ with $-v$ if necessary, we may assume that $v_{n-1} - v_n > 0$. Then there exist
\be
0 < \eps_1 < \cdots < \eps_a < \eps
\ee
with the following property. On ${\rm Rel}_{-itv}(X,\om)$, as $t$ increases from $0$ to $\eps_1$, the horizontal cylinder $C_{(g-a)+n-1}$ collapses to height $0$ and a new horizontal cylinder emerges, which we also denote $C_{(g-a)+n-1}$. No other horizontal cylinders collapse, although their heights may change. Let $(X_1,\om_1) = {\rm Rel}_{-i\eps_1v}(X,\om)$. On $(X_1,\om_1)$, the cylinder $C_{(g-a)+n-1}$ has circumference $w_{g-a} + s_1$. No other horizontal cylinder circumferences change. On $(X_1,\om_1)$, the cylinder $C_{(g-a)+n-1}$ contains $Z_n$ in its top boundary and $Z_{n-1}$ in its bottom boundary, the cylinder $C_1^\pr$ contains $Z_{n-1}$ in its top boundary and $Z_n$ in its bottom boundary, and the cylinder $C_{g-a}$ contains $Z_n$ in its top boundary and $Z_n$ in its bottom boundary. The zeros in the boundaries of all other horizontal cylinders remain the same. Similarly, for $2 \leq j \leq a$, on ${\rm Rel}_{-itv}(X,\om)$, as $t$ increases from $\eps_{j-1}$ to $\eps_j$, the horizontal cylinder $C_{j-1}^\pr$ collapses to height $0$ and a new horizontal cylinder emerges, which we also denote $C_{j-1}^\pr$. No other horizontal cylinders collapse. Let $(X_j,\om_j) = {\rm Rel}_{-i\eps_jv}(X,\om)$. On $(X_j,\om_j)$, the cylinder $C_{j-1}^\pr$ has circumference $w_{g-a} + s_1 + \cdots + s_j$. No other horizontal cylinder circumferences change. On $(X_j,\om_j)$, the cylinder $C_{j-1}^\pr$ contains $Z_n$ in its top boundary and $Z_{n-1}$ in its bottom boundary, the cylinder $C_j^\pr$ contains $Z_{n-1}$ in its top boundary and $Z_n$ in its bottom boundary, and the cylinder $C_{j-2}^\pr$ contains $Z_n$ in its top and bottom boundaries, where $C_0^\pr = C_{(g-a)+n-1}$. The zeros in the boundaries of all other horizontal cylinders remain the same. See Figure \ref{fig:slittori31} for an example in the case $\kap = (1,3)$.

The horizontal cylinders on $(X,\om)$ that are twisted along ${\rm Rel}_{\R v}(X,\om)$ are $C_1,\dots,C_{(g-a)+n-1}$. Since $(v_{t(j)} - v_{b(j)})/w_j$, $1 \leq j \leq (g-a) + n - 1$, are linearly independent over $\Q$,  by Lemma \ref{lem:realReltwist}, the closure of ${\rm Rel}_{\R v}(X,\om)$ contains all holomorphic $1$-forms obtained from $(X,\om)$ by twisting $C_1,\dots,C_{(g-a)+n-1}$ arbitrarily. The horizontal cylinders on $(X_1,\om_1)$ that are twisted along ${\rm Rel}_{\R v}(X_1,\om_1)$ are $C_1,\dots,C_{(g-a)-1}$, $C_{(g-a)+1},\dots,C_{(g-a)+n-1},C_1^\pr$. Since $(v_{n-1}-v_n)/w_{(g-a)+n}$, $(v_{t(j)}-v_{b(j)})/w_j$, $1 \leq j \leq (g-a) + n - 1$, are linearly independent over $\Q$, by Lemma \ref{lem:realReltwist}, the closure of ${\rm Rel}_{\R v}(X_1,\om_1)$ contains all holomorphic $1$-forms obtained from $(X_1,\om_1)$ by twisting $C_1,\dots,C_{(g-a)-1}$, $C_{(g-a)+1},\dots,C_{(g-a)+n-1},C_1^\pr$ arbitrarily. Similarly, for $2 \leq j \leq a$, the closure of ${\rm Rel}_{\R v}(X_j,\om_j)$ contains all holomorphic $1$-forms obtained from $(X_j,\om_j)$ by twisting the cylinders $C_1,\dots,C_{(g-a)-1}$, $C_{(g-a)+1},\dots,C_{(g-a)+n-2},C_{j-1}^\pr,C_j^\pr$ arbitrarily. Now let $L$ be the leaf of $\cA_V(\kap)$ through $(X,\om)$. For $1 \leq j \leq a$, by considering sequences of the form
\be
{\rm Rel}_{t_2 v} {\rm Rel}_{i\eps_jv} {\rm Rel}_{t_1 v} {\rm Rel}_{-i\eps_jv} {\rm Rel}_{t_0 v}(X,\om) \in L
\ee
with $t_0,t_1,t_2 \in \R$ such that ${\rm Rel}_{t_0 v}(X,\om)$ is close to $(X,\om)$ and such that ${\rm Rel}_{t_1 v} {\rm Rel}_{-i\eps_jv} {\rm Rel}_{t_0 v}(X,\om)$ is close to ${\rm Rel}_{-i\eps_jv} {\rm Rel}_{t_0 v}(X,\om)$, we see that the closure $\ol{L}$ contains all holomorphic $1$-forms obtained from $(X,\om)$ by twisting $C_1,\dots,C_{(g-a)+n-1},C_j^\pr$ arbitrarily. More generally, by considering appropriate sequences of the form
\be
{\rm Rel}_{t_{j+1}v} {\rm Rel}_{i\eps_j v} {\rm Rel}_{t_j v} {\rm Rel}_{i(\eps_{j-1}-\eps_j)v} \cdots {\rm Rel}_{t_2 v} {\rm Rel}_{i(\eps_1-\eps_2)v} {\rm Rel}_{t_1 v} {\rm Rel}_{-i\eps_1v} {\rm Rel}_{t_0 v}(X,\om) \in L
\ee
with $t_0,\dots,t_{j+1} \in \R$, we conclude that $\ol{L}$ contains the image of the horizontal twist map $\rho_\om$, and in particular contains the horocycle $u_\R (X,\om)$. An analogous argument, with $D_j$ and $D_j^\pr$ in place of $C_j$ and $C_j^\pr$, and swapping ${\rm Rel}_{iv}$ and ${\rm Rel}_v$, shows that $\ol{L}$ contains the opposite horocycle $v_\R (X,\om)$. Thus, $\ol{L}$ is $\SL(2,\R)$-invariant. By Theorem \ref{thm:Wdense}, the $\SL(2,\R)$-orbit of $(X,\om)$ is dense in $\wt{\cC}_1$. Thus, $L$ is dense in $\wt{\cC}_1$.

The remaining cases where $(Y,\eta)$ arises from the construction in Case 2 or Case 3 are similar. Again, $(Y,\eta)$ is constructed from $g-a$ flat tori $T_1,\dots,T_{g-a}$ by iteratively applying the slit construction. We choose $s_j,t_j \in \R$ satisfying (\ref{eq:smalls})-(\ref{eq:slopest}) and apply the same surgery as before with $T_{g-a}$ and $T_j^\pr = (\C/(\Z s_j + i\Z t_j),dz)$. The rest of the argument proceeds similarly.
\end{proof}


\bibliographystyle{math}
\bibliography{my.bib}

{\small
\noindent
Email: kwinsor@math.harvard.edu

\noindent
Department of Mathematics, Harvard University, Cambridge, Massachusetts, USA
}

\end{document}